\theoremstyle{plain}
\newtheorem{theorem}{Theorem}[section]
\newtheorem{lemma}[theorem]{Lemma}
\newtheorem{proposition}[theorem]{Proposition}
\newtheorem{corollary}[theorem]{Corollary}
\theoremstyle{definition}
\newtheorem{definition}[theorem]{Definition}
\newtheorem{remark}[theorem]{Remark}
\newcommand{\A}{\ensuremath{\mathcal{A}}}
\newcommand{\B}{\ensuremath{\mathcal{B}}}
\newcommand{\CC}{\ensuremath{\mathcal{C}}}
\newcommand{\D}{\ensuremath{\mathcal{D}}}
\newcommand{\V}{\ensuremath{\mathcal{V}}}
\DeclareMathOperator{\BSub}{BSub}
\DeclareMathOperator{\oDir}{Dir}
\DeclareMathOperator{\Proj}{Proj}
\newcommand{\mc}{\mathcal}
\newcommand{\sa}{\ensuremath{\mathrm{sa}}}
    \gdef\node@@on@layer{%
      \setbox\tikz@tempbox=\hbox\bgroup\pgfonlayer{#1}\unhbox\tikz@tempbox\endpgfonlayer\pgfsetlinewidth{0.7pt}\egroup}
\def\node@on@layer{\aftergroup\node@@on@layer}
\tikzstyle{dot}=[circle, draw=black!70, fill=black!20, inner sep=.25ex, node on layer=dotlayer]
\tikzstyle{Dot}=[circle, draw=black, fill=black!70, inner sep=.25ex, node on layer=dotlayer]
\tikzstyle{thinline}=[-, draw=black!20, line width=.5pt, node on layer=linelayer]
\tikzstyle{line}=[-, draw=black!50, line width=.5pt, node on layer=linelayer]
\tikzstyle{Line}=[-, draw=black, line width=1pt, node on layer=linelayer]
\title{Orthogeometries and AW*-algebras}
\author{John Harding}
\address{New Mexico State University}
\email{hardingj@nmsu.edu}
\author{Bert Lindenhovius}
\address{Tulane University}
\email{alindenh@tulane.edu} 
\begin{document}

\begin{abstract}
Based on results of \cite{HHLN} we give a connection between the category of AW*-algebras and their normal Jordan homomorphisms and a category $\mathbf{COG}$ of \emph{orthogemetries}, which are structures that are somewhat similar to projective geometries, consisting of a set of points and a set of lines, where each line contains exactly 3 points. They are constructed from the commutative AW*-subalgebras of an AW*-algebra that have at most an 8-element Boolean algebra of projections. Morphisms between orthogemetries are partial functions between their sets of points as in projective geometry. The functor we create $\A_*:\mathbf{AW}_J^*\to\mathbf{COG}$ is injective on non-trivial objects, and full and faithful with respect to morphisms that do not involve type $I_2$ factors. 
\end{abstract}

\maketitle

\section{Introduction}

The subject of this note is the poset $\B(A)$ (denoted in \cite{HHLN} by $\BSub(A)$) of Boolean subalgebras of an orthomodular lattice $A$, and the poset $\A(M)$ of abelian subalgebras of an AW*-algebra $M$. Study of such posets grew from the topos approach to the foundations of physics initiated by Isham and others \cite{DoringIsham,Isham} where the topos of presheaves on such posets were the primary object of study, providing ``classical snapshots'' of a quantum system. 

A number of papers \cite{Doring,Hamhalter,HHLN,HardingNavara,Lindenhovius} have considered aspects of such posets and their relationships to the originating orthomodular structure or operator algebra. In \cite{HHLN} is given a direct method to reconstruct an orthoalgebra $A$ from the poset $\B(A)^*$ of Boolean subalgebras of $A$ having at most 16 elements. Such posets arising this way are called {\em orthohypergraphs}. They behave much like projective geometries, and morphisms between them are defined much as in the case of classical projective geometry \cite{Faure}. Hypergraphs arising in this way from an orthoalgebra are characterized, and it is shown that there is a ``near'' categorical equivalence between the category of orthoalgebras and that of such orthohypergraphs. This can be viewed as an extension of the technique of Greechie diagrams \cite{GreechieD}. 

In this note we specialize and adapt results of \cite{HHLN} to the setting of orthomodular lattices and the orthomodular lattice homomorphisms between them. We then use a version of Dye's theorem \cite{Hamhalter2,HeunenReyes} to lift results to the AW*-algebra setting. We provide a functor from the category of AW*-algebras and their normal Jordan $*$-homomorphisms to the category of orthohypergraphs and their normal hypergraph morphisms that is injective on objects and full and faithful on morphisms provided no type $I_2$ summands are present. 

This note is arranged in the following way. In the second section we adapt and simplify results of \cite{HHLN} to apply to orthomodular posets and lattices, and in the third section we lift these results to the AW*-setting using Dye's theorem \cite{Hamhalter2,HeunenReyes}. Results, notation, and terminology of \cite{HHLN} will be assumed. For general reference on orthomodular structures see \cite{DvurPulm:Trends,Kalmbach}.

\section{hypergraphs of orthomodular posets and lattices}

In \cite{HHLN} hypergraphs of orthoalgebras were characterized and a near equivalence between the category of orthoalgebras and orthohypergraphs was given. Here these results are specialized and simplified for application to orthomodular posets and orthomodular lattices. Although we recall a few basics, the reader should consult \cite{HHLN} for background and notation. We begin with the following from \cite[Def.~6.1]{HHLN}. 

\begin{definition}
A {\em hypergraph} is a triple $\mathcal{H}=(P,L,T)$ consisting of a set $P$ of {\em points}, a set $L$ of {\em lines}, and a set $T$ of {\em planes}. A line is a set of 3 points, and a plane is a set of 7 points where the restriction of the lines to these 7 points is as shown below. 
\vspace{2ex}

\begin{center}
\begin{tikzpicture}[scale = 1]
\draw[fill] (-6,.6) circle(1.5pt); \draw[fill] (-4,.6) circle(1.5pt);\draw[fill] (-3,.6) circle(1.5pt);\draw[fill] (-2,.6) circle(1.5pt);
\draw[thin] (-4,.6)--(-2,.6);
\node at (-6,-.5) {point}; \node at (-3,-.5) {line}; \node at (1,-.5) {plane};
\draw[fill] (0,0) circle(1.5pt); \draw[fill] (1,0) circle(1.5pt); \draw[fill] (2,0) circle(1.5pt); \draw[fill] (1,1.414) circle(1.5pt); \draw[fill] (1,.48) circle(1.5pt); \draw[fill] (.5,.707) circle(1.5pt); \draw[fill] (1.5,.707) circle(1.5pt); 
\draw[thin] (0,0)--(2,0)--(1,1.414)--(0,0)--(1.5,.707); \draw[thin] (1,1.414)--(1,0); \draw[thin] (.5,.707)--(2,0);
\end{tikzpicture}
\end{center}
\end{definition}

From an orthoalgebra $A$, one creates its hypergraph $\mathcal{H}(A)$ whose points, lines, and planes are the Boolean subalgebras of $A$ having 4, 8, and 16 elements, respectively \cite[Defn.~6.3]{HHLN}. 

When dealing with hypergraphs, a plane is represented as a certain configuration of 7 points and 6 lines. However, in the case of hypergraphs arising as $\mathcal{H}(A)$ for an orthoalgebra $A$, not every such configuration of points and lines is indeed a plane. So in general, planes must be separate entities. On the other hand \cite[Prop.~6.7]{HHLN} any such configuration of points and lines in a hypergraph $\mathcal{H}(A)$ for an orthomodular poset $A$ will constitute a plane. So when working with orthomodular posets, we aim to treat planes as a derived notion. This will be our setting, and we introduce new terminology for it. 

\begin{definition}
A pre-orthogeometry $\mathcal{G}=(P,L)$ is a set $P$ of points and a set $L$ of lines where each line is a set of 3 points such that any two points are contained in at most one line. A subspace of a pre-orthogeometry is a set $S$ of points such that whenever 2 points of a line belong to $S$, then the third point of the line also belongs to $S$. 
\end{definition}

Let $\mathcal{H}(A)$ be the hypergraph of an orthoalgebra $A$ and $p,q$ be distinct points of it. So $p,q$ are 4-element Boolean subalgebras of $A$. Then $p,q$ lie on a line $\ell$, that is an 8-element Boolean subalgebra of $A$, iff there are orthogonal elements $a,b\in A\setminus\{0,1\}$ such that $p=\{0,a,a',1\}$ and $q=\{0,b,b',1\}$. In this case, $\{0,a,a',b,b',a\vee b,a'\wedge b',1\}$ is the only 8-element Boolean subalgebra of $A$ that contains $p,q$. Thus all points of $\mathcal H(A)$ are contained in at most one line. 
	
For an arbitrary hypergraph $\mathcal{H}=(P,L,T)$ such that any two points are contained in at most one line, $\mathcal H$ gives a pre-orthogeometry $\mathcal{H}_*=(P,L)$ by forgetting its set of planes. For a pre-orthogeometry $\mathcal{G}=(P,L)$, construct a hypergraph $\mathcal{G}^*=(P,L,T)$ whose planes are subspaces of 7 points such that there are exactly 6 lines that contain at least two of these points, and these 7 points and 6 lines form a configuration that could be a plane. We always have $(\mathcal{G}^*)_*=\mathcal{G}$, and if $\mathcal{H}$ is the hypergraph of an orthomodular poset we have $(\mathcal{H}_*)^*=\mathcal{H}$ \cite[Prop. 6.7]{HHLN}. We next provide a link between pre-orthogeometries and the orthodomains that are described in \cite[Defn.~4.3]{HHLN}. This allows access to results of \cite{HHLN}. 

\begin{lemma} \label{lk}
Let $\mathcal{G}=(P,L)$ be a pre-orthogeometry. Then considering $\mathcal{G}^*$ as a poset by adding a new bottom $\bot$ and with the natural order of inclusion among the points, lines, and planes, we have that $\mathcal{G}^*$ is an orthodomain. 
\end{lemma}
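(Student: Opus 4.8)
The plan is to check, clause by clause, that the poset $\mathcal{G}^{*}$ (meaning $\mathcal{G}^{*}$ together with the adjoined bottom $\bot$, ordered by set-inclusion among the points, lines and planes) meets the definition of an orthodomain from \cite[Defn.~4.3]{HHLN}. I would begin from the order-theoretic skeleton: $\bot$ is the least element; the atoms are exactly the points of $\mathcal{G}$; the lower covers of a line are exactly its three points; and the lower covers of a plane are exactly the six lines lying inside it, so $\mathcal{G}^{*}$ is graded into ranks $0,1,2,3$ occupied respectively by $\bot$, the points, the lines, and the planes. Since every element of $\mathcal{G}^{*}$ is literally a set of points and the order is genuine inclusion of such sets, every element is the join of the atoms below it, and every principal downset is finite. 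Consequently the domain-theoretic part of \cite[Defn.~4.3]{HHLN} (directed completeness together with algebraicity/compactness of the elements) holds trivially, and only the combinatorial clauses governing the three ranks remain.

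Next I would verify the local structure rank by rank. The interval below a line $\ell$ consists of $\bot$, three pairwise-incomparable atoms, and $\ell$, which is exactly the poset of subalgebras of the $8$-element Boolean algebra; this is immediate because, by definition of a pre-orthogeometry, a line is a $3$-element set of points, while the clause requiring that any two atoms lie below at most one rank-$2$ element is the second defining condition of a pre-orthogeometry. For a plane $\pi$ I would first use that $\pi$ is a \emph{subspace} to see that any line of $\mathcal{G}$ meeting $\pi$ in two points has all three of its points in $\pi$, hence lies below $\pi$ in $\mathcal{G}^{*}$; by the definition of the set of planes there are exactly six such lines, and together with the seven points of $\pi$ they realize the configuration prescribed for planes, which is precisely the poset of subalgebras of the $16$-element Boolean algebra. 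So the interval below $\pi$ has the shape that \cite[Defn.~4.3]{HHLN} asks of a rank-$3$ element---in particular two lines below $\pi$ share at most one point, and any triangle in $\pi$ already determines all seven points of $\pi$. The identity $(\mathcal{G}^{*})_{*}=\mathcal{G}$ records, as a consistency check, that the rank-$1$ and rank-$2$ elements one reads off from the order are exactly the points and lines of $\mathcal{G}$.

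The step I expect to be the real work is the remaining, more technical coherence clauses of \cite[Defn.~4.3]{HHLN}: that the rank-$3$ elements obtained purely order-theoretically (seven-atom elements whose lower covers realize the plane configuration) coincide with the declared planes, and that cross-rank incidences are exactly the expected ones---e.g.\ a line and a plane are comparable only when all three points of the line lie in the plane, which is again the subspace condition. What keeps this at the level of bookkeeping rather than a genuine obstacle is that a pre-orthogeometry is a very weak structure, so \cite[Defn.~4.3]{HHLN} can impose no \emph{existence} obligations---no ``every line lies in a plane'', no Pasch-type axiom forcing a triangle to be coplanar---since an isolated point, a single line, or a bare triangle must already be an orthodomain. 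Hence every clause of \cite[Defn.~4.3]{HHLN} turns out to be either vacuous, or a restatement of a defining clause of a pre-orthogeometry, of the definition of the set of planes, or of the subspace property; the only care needed is to line up the abstract rank-by-rank axioms against these concrete facts, after which the lemma follows.
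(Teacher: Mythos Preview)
Your approach is essentially the paper's: verify the clauses of \cite[Defn.~4.3]{HHLN} one by one, using finite height for the domain-theoretic requirements, the explicit combinatorics of lines and planes for the ``$\downarrow x$ is a Boolean domain'' clause, and the pre-orthogeometry axiom together with the subspace property of planes for the incidence conditions. The paper's proof is organized around the one clause that carries any content---namely that if distinct atoms $x,y$ are both covered by some $w$, then $w$ is actually the join $x\vee y$ in $\mathcal{G}^*$---and dispatches it by noting that the pre-orthogeometry axiom forces $w$ to be the unique line through $x,y$, while the subspace condition forces any plane containing $x,y$ to contain $w$; you have both ingredients but scatter them across two paragraphs without assembling them into this specific join statement. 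Your third paragraph also worries about a non-issue: the rank-$3$ elements of $\mathcal{G}^*$ \emph{are} the declared planes by construction, so there is no separate order-theoretic notion of plane to reconcile with them.
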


\begin{proof}
In this poset $\mathcal{G}^*$ the points are atoms, each line is the join of the atoms it contains, and each plane is the join of the atoms it contains. So it is atomistic, and since it is of finite height, it is directedly complete and the atoms are compact. By construction, for each element $x\in\mathcal{G}^*$ the downset $\downarrow x$ is a Boolean domain. Finally, if $x,y$ are distinct atoms of $\mathcal{G}^*$ and are covered by $w$, then $x,y$ are points of $\mathcal{G}$ and $w$ is a line of $\mathcal{G}$ that contains $x,y$. Since two lines of $\mathcal{G}$ that contain the same two points must be equal, there is no other element of $\mathcal{G}^*$ of height 2 containing $x,y$. Since any plane of $\mathcal{G}^*$ that contains $x,y$ must contain the line $w$ they span, it follows that $w$ is the join $x\vee y$. Thus $\mathcal{G}^*$ satisfies the conditions of \cite[Defn.~4.3]{HHLN}, so is an orthodomain. 
\end{proof}

\begin{definition}
An orthogeometry $\mathcal{G}=(P,L)$ is a pre-orthogeometry that is isomorphic to $(\mathcal{H}(A))_*$ for some orthomodular poset $A$. Thus its points and lines are configured as the 4 and 8-element Boolean subalgebras of $A$. 
\end{definition}

We aim to characterize orthogeometries in elementary terms. As in \cite{HHLN}, key is the notion of a direction. In the setting of pre-orthogeometries, this definition simplifies considerably. 

\begin{definition}
A direction $d$ for a point $p$ of a pre-orthogeometry $\mathcal{G}$ is an assignment $d(\ell)$ of either $\uparrow$ or $\downarrow$ to each line $\ell$ that contains $p$ such that two lines $\ell$ and $m$ obtain opposite values of $\uparrow$ and $\downarrow$ iff they lie as part of a subspace of $\mathcal{G}$ forming a plane with $\ell,m$ the only lines in this plane containing $p$. 
\vspace{2ex}

\begin{center}
\begin{tikzpicture}[xscale=1.2,yscale=1.4]
\draw[fill] (0,0) circle(1.5pt); \draw[fill] (1,0) circle(1.5pt); \draw[fill] (2,0) circle(1.5pt); \draw[fill] (1,1.414) circle(1.5pt); \draw[fill] (1,.48) circle(1.5pt); \draw[fill] (.5,.707) circle(1.5pt); \draw[fill] (1.5,.707) circle(1.5pt); 
\draw[thin, dashed] (0,0)--(2,0)--(1,1.414)--(0,0)--(1.5,.707); \draw[very thick] (1,1.414)--(1,0); \draw[thin, dashed] (.5,.707)--(2,0); \draw[very thick] (0,0)--(2,0);
\node at (-.4,0) {$\ell$}; \node at (1.4,1.414) {$m$}; \node at (1,-.35) {$p$};
\end{tikzpicture} 
\end{center}
\vspace{1ex}
The directions for $\mathcal{G}$ are the directions for the points of $\mathcal{G}$ and two additional directions $0,1$. 
\end{definition}

Note that since any two points are contained in at most one line, any two intersecting lines span at most one plane. 
The idea behind a direction is as follows. In the hypergraph of an orthoalgebra $A$, a point $p$ is a 4-element Boolean subalgebra $\{0,a,a',1\}$ for some $a\neq 0,1$ in $A$. A line $\ell$ containing $p$ is an 8-element Boolean subalgebra that contains $a$, and a plane containing $p$ is a 16-element Boolean subalgebra containing $a$. The direction $d$ for $p$ says whether $a$ occurs as an atom or coatom in $\ell$. If $a$ occurs as a coatom of $\ell$ and atom of $m$, then there are $b<a$ in $\ell$ and $a<c$ in $m$. Then there is a 16-element Boolean subalgebra generated by the chain $b<a<c$, and this is the plane containing $\ell$ and $m$ in the definition of a direction. 

\begin{lemma} \label{dir}
Let $\mathcal{G}=(P,L)$ be a pre-orthogeometry. 
A direction $d$ of $\mathcal{G}$ for a point $p\in P$ extends uniquely to a direction $d^*$ of the orthodomain $\mathcal{G}^*$ for $p$, and a direction $e$ of $\mathcal{G}^*$ for $p$ restricts to a direction of $\mathcal{G}$ for $p$.
\end{lemma}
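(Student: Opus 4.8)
The plan is to unwind the definition of a direction of an orthodomain from \cite{HHLN} for the particular orthodomain $\mathcal{G}^*$ (legitimate by Lemma~\ref{lk}) and to match it, level by level, with the simplified notion of direction of the pre-orthogeometry $\mathcal{G}$. Since $\mathcal{G}^*$ has height $3$, the elements above a point $p$ are $p$, the lines of $\mathcal{G}$ through $p$, and the planes of $\mathcal{G}^*$ through $p$; unwound, a direction of $\mathcal{G}^*$ for $p$ amounts to a family $(e_x)_{x\ge p}$ in which each $e_x$ is a direction of the Boolean domain $\downarrow x$ for $p$ and the $e_x$ are compatible along the inclusions $\downarrow x\subseteq\downarrow y$. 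For a line $\ell\ni p$, $\downarrow\ell$ is the subalgebra poset of an $8$-element Boolean algebra and a direction of it for $p$ is just a value $\uparrow$ or $\downarrow$; for a plane $T\ni p$, $\downarrow T$ is the subalgebra poset of a $16$-element Boolean algebra, in which $p$ lies on either three lines (a ``vertex or centre'' of the plane configuration) or two lines (a ``midpoint''), and by the description of directions of Boolean domains in \cite{HHLN} a direction of $\downarrow T$ for $p$ assigns, in the first case, a common value to all three of these lines, and in the second case opposite values to the two of them. The maps in the statement are then the obvious ones: restriction reads off $d(\ell):=e_\ell$, and extension produces the family agreeing with a given $d$ on lines.

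For the restriction, let $e=(e_x)$ be a direction of $\mathcal{G}^*$ for $p$ and set $d(\ell)=e_\ell$. If a plane $T$ has $\ell,m$ as its only two lines through $p$, then $T$ lies above both, $p$ is a midpoint of $T$, and $e_T$ restricts to $e_\ell$ on $\downarrow\ell$ and to $e_m$ on $\downarrow m$; since a direction of $\downarrow T$ for a midpoint takes opposite values on its two lines, $d(\ell)\ne d(m)$. Conversely, if $d(\ell)\ne d(m)$ then, by the way directions of orthodomains are set up in \cite{HHLN}, the incomparable $\ell,m$ must have a common upper bound, necessarily a plane $T$ containing both; since $e_\ell\ne e_m$, the direction $e_T$ of $\downarrow T$ for $p$ cannot be of the ``vertex or centre'' kind, so $p$ is a midpoint of $T$, i.e.\ $\ell,m$ are the only two lines of $T$ through $p$. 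Hence $d$ satisfies the pre-orthogeometry condition and is a direction of $\mathcal{G}$ for $p$.

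For the extension, let $d$ be a direction of $\mathcal{G}$ for $p$; define $d^*$ to agree with $d$ on lines, and on each $\downarrow T$ with $T$ a plane through $p$ to be the direction of $\downarrow T$ for $p$ that agrees with $d$ on the lines of $T$ through $p$. The heart of the argument is that such a direction of $\downarrow T$ for $p$ exists, i.e.\ that $d$ restricted to the lines of $T$ through $p$ is a coherent piece of data. Since two intersecting lines span at most one plane and generate it as a subspace, any two of the lines of $T$ through $p$ span exactly $T$. If $p$ is a midpoint of $T$ there are just two such lines and in $T$ they are the only two through $p$, so the pre-orthogeometry condition forces $d(\ell)\ne d(m)$ on them --- exactly the pattern of a direction of $\downarrow T$ for a midpoint. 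If $p$ is a vertex or centre of $T$ there are three such lines $\ell_1,\ell_2,\ell_3$, and for any two of them the unique plane they span is $T$, in which they are \emph{not} the only two lines through $p$ (the third one is also there); so the pre-orthogeometry condition forces $d(\ell_1)=d(\ell_2)=d(\ell_3)$ --- exactly the pattern of a direction of $\downarrow T$ for a vertex or centre. Thus the required direction of $\downarrow T$ for $p$ exists; it is visibly unique, and the resulting family $(d^*_x)$ is compatible along inclusions $\downarrow\ell\subseteq\downarrow T$ because in each plane the data was chosen to restrict to $d(\ell)$ on $\ell$. The remaining clauses of the definition of a direction of an orthodomain from \cite{HHLN} are then checked routinely, and uniqueness of the extension is clear, since the datum on each plane $T\ni p$ is already determined by its restriction to any one line of $T$ through $p$.

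The step I expect to be the main obstacle is precisely this well-definedness in the extension: extracting from the bare pre-orthogeometry condition the fact that two of the three lines through a ``vertex or centre'' point of a plane cannot be witnessed by any plane as the only two lines through that point. This is what makes the line-data of $d$ glue to plane-data, and it hinges on the uniqueness of the plane spanned by two intersecting lines noted after the definition of a direction. Everything else amounts to bookkeeping that matches the two notions of direction level by level.
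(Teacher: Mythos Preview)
Your proof is correct and follows essentially the same route as the paper's: both unwind the \cite{HHLN} notion of direction for the height-$3$ orthodomain $\mathcal{G}^*$ into line and plane data, handle restriction via the compatibility condition in \cite[Defn.~4.9]{HHLN}, and for extension establish the key fact that $d$ is constant on the three lines through a vertex/centre point of a plane by invoking uniqueness of the plane containing two intersecting lines. The paper spells out that uniqueness via an explicit $5$-points-force-$7$ subspace argument and carries out the ``routine checks'' of \cite[Defn.~4.9]{HHLN} in detail, whereas you cite the remark after the definition of direction and leave the final verification as bookkeeping; substantively the arguments coincide.
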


\begin{proof}
By Lemma~\ref{lk} $\mathcal{G}^*$ is an orthodomain. By \cite[Defn.~4.9]{HHLN} a direction $e$ of the orthodomain $\mathcal{G}^*$ for a point $p$ is a mapping whose domain is the upper set ${\uparrow}p$. This map associates to each $y\in{\uparrow}p$ a pair of elements $e(y)=(v,w)$ called a principle pair in the Boolean domain ${\downarrow}y$. In the current circumstances this means that $e(p)=(p,p)$ and for a line $\ell$ with $p<\ell\leq y$ that $e(\ell)$ is either $(p,\ell)$ or $(\ell,p)$. Here, as in \cite[Cor.~2.21 ff]{HHLN} we denote $(p,\ell)$ by $\downarrow$ and $(\ell,p)$ by $\uparrow$. For a plane $w$ containing $p$, if $p$ lies on three lines of $w$ then $e(w)$ is either $(p,w)$ or $(w,p)$, and if $p$ lies on only two lines of $w$ then $e(w)$ is either $(\ell,m)$ or $(m,\ell)$ where $\ell,m$ are the unique lines of $w$ that contain $p$. Additionally, if $p\leq z\leq y$ and $d(y)=(u,v)$ then \cite[Defn.~4.9]{HHLN} we have $d(z)=(z\wedge u,z\wedge v)$. Finally \cite[Defn.~4.9]{HHLN} if $\ell,m$ are lines containing $p$ and $d(\ell),d(m)$ take opposite values of $\uparrow,\downarrow$, then $\ell\vee m$ exists and is a plane containing both. 

So if $e$ is a direction of the orthodomain $\mathcal{G}^*$ for $p$, by restricting $e$ to the lines of $\mathcal{G}$ that contain $p$ and for such a line $\ell$ using $\downarrow$ for $(p,\ell)$ and $\uparrow$ for $(\ell,p)$ we have the restriction of $e$ is an assignment of $\downarrow,\uparrow$ to each line of $\mathcal{G}$ containing $p$. To show this is a direction of the pre-orthogeometry $\mathcal{G}$ we must show that if $\ell,m$ are lines containing $p$, then $e(\ell)$ and $e(m)$ take opposite values of $\downarrow,\uparrow$ iff $\ell,m$ belong to a plane and are the only two lines of this plane containing $p$. If $e(\ell),e(m)$ take different values of $\downarrow,\uparrow$, then \cite[Defn.~4.9]{HHLN} gives that $w=\ell\vee m$ is a plane. If $p$ lies on three lines of this plane, we have from the first paragraph of the proof that $e(w)$ is either $(p,w)$ or $(w,p)$, and it follows that either $e(\ell)=(p,\ell)= {\downarrow}$ and $e(m)=(p,m)={\downarrow}$ or $e(\ell)=(\ell,p)={\uparrow}$ and $e(m)=(m,p)={\uparrow}$, contrary to assumption. Conversely, if $\ell,m$ belong to a plane $w$ and are the only two lines of $w$ containing $p$, the first paragraph shows $e(w)$ is either $(\ell,m)$ or $(m,\ell)$. In the first case, $e(\ell)=(\ell\wedge\ell,m\wedge\ell)=(\ell,p)={\uparrow}$ and $e(m)=(\ell\wedge m,m\wedge m)=(p,m)={\downarrow}$. The second case is symmetric, and in either case $e$ takes opposite values of $\downarrow,\uparrow$ at $\ell,m$. So $e$ restricts to a direction of $\mathcal{G}$. 

Let $d$ be a direction of $\mathcal{G}$ for $p$. We wish to extend this to a direction $e$ of $\mathcal{G}^*$. For a line $\ell$ containing $p$ set $e(\ell)=(p,\ell)$ if $d(\ell)={\downarrow}$ and set $e(\ell)=(\ell,p)$ if $d(\ell)={\uparrow}$. Suppose $w$ is a plane of $\mathcal{G}^*$ containing $p$. Then $w$ is a subspace of $\mathcal{G}$ with 7 points and 6 lines that is configured as a plane.  To define $e(w)$ we consider the cases where there are 3 lines of $w$ containing $p$, and where only 2 lines of $w$ contain $p$. 

Suppose $w$ has 3 lines $\ell,m,n$ that contain $p$. If $d$ has opposite values on two of these lines, say $d(\ell)={\downarrow}$ and $d(m)={\uparrow}$, then by the definition of a direction of a pre-orthogeometry $\ell,m$ must belong to a plane $z$ in which $\ell,m$ are the only lines containing $p$. But the five points on the lines $\ell,m$ belong to both the planes $w,z$, and it is easily seen that since both $w,z$ are subspaces of $\mathcal{G}$ that the other two points of the plane $w$ belong to $z$. But this implies that $w=z$, a contradiction. We conclude that $d$ takes the same value of $\downarrow,\uparrow$ on all three of $\ell,m,n$. If this value is $\downarrow$ set $e(w)=(p,w)$, and if this value is $\uparrow$, set $e(w)=(w,p)$. If $w$ has only two lines $\ell,m$ that contain $p$, then the definition of a direction of a pre-orthogeometry shows that $d$ takes opposite values of $\downarrow,\uparrow$ on $\ell,m$. If $d(\ell)={\downarrow}$ and $d(m)={\uparrow}$, set $e(w)=(m,\ell)$, and if $d(\ell)={\uparrow}$ and $d(m)={\downarrow}$ set $e(w)=(l,m)$. 

We use \cite[Defn.~4.9]{HHLN} to show $e$ is a direction for $p$ in the orthodomain $\mathcal{G}^*$. Suppose $p\leq y$. By construction $e(y)$ is a principal pair for $p$ in the Boolean domain ${\downarrow}y$. Also, if $\ell,m$ are lines containing $p$ with $e(\ell)=(p,\ell)$ and $e(m)=(m,p)$, then $d(\ell)={\downarrow}$ and $d(m)={\uparrow}$. So by the definition of a direction of a pre-orthogeometry there is a plane $w$ containing $\ell,m$ in which these are the only lines containing $p$, and we have seen this is the only plane containing $\ell,m$. Thus $\ell\vee m = w$ and $w$ covers $\ell,m$. Finally, if $p\leq z < y$ and $e(y)=(u,v)$ we must show $e(z)=(u\wedge z,v\wedge z)$. If $z=p$ this is clear. So the only case of interest is when $z=\ell$ and $y=w$ is a plane. If there are 3 lines of $w$ containing $p$ and $d(\ell)={\downarrow}$, then $e(w)=(p,w)$ and $e(\ell)=(p,\ell)$ and our condition is verified. If there are 3 lines of $w$ containing $p$ with $d(\ell)={\uparrow}$, then $e(w)=(w,p)$ and $e(\ell)=(\ell,p)$, and again our condition is verified. If there are only two lines $\ell,m$ of $w$ containing $p$, then as we have seen $d$ takes opposite values on these lines. Suppose $d(\ell)={\downarrow}$ and $d(m)={\uparrow}$. Then $e(w)=(m,\ell)$, $e(\ell)=(p,\ell)$ and $e(m)=(m,p)$. Again our condition is verified. Thus $e$ is a direction of $\mathcal{G}^*$ and it clearly restricts to $d$. 
\end{proof}

We are now in a position to establish our characterization of orthogeometries, those configurations of points and lines that arise as the 4 and 8-element Boolean subalgebras of an orthomodular poset. In this formulation we use the notion of a triangle. 

\begin{definition} 
A triangle in an orthogeometry is a set of three points, any two of which belong to a line. A triangle is non-degenerate if the three points are distinct and do not all lie on the same line.
\end{definition} 

We will make use of several results in \cite{HHLN} which depend on the condition of orthodomains being \emph{proper}, i.e., orthodomains without maximal elements of height 1 or less. The corresponding notion for pre-orthogeometries is that every point is contained in some line. We will call such a pre-orthogeometry \emph{proper} as well. Clearly an orthodomain $X$ is proper if and only if its associated pre-orthogeometry $(P,L)$ is proper, where $P$ consists of the atoms of $X$ and $L$ consists of the elements covering some atom in $X$. We call an orthoalgebra $A$ proper if it does not have blocks of four or fewer elements. Then $A$ is proper if and only if its associated orthodomain $\B(A)$ of Boolean subalgebras of $A$ is proper. 

\begin{theorem} \label{orthogeometry}
A proper pre-orthogeometry $\mathcal{G}=(P,L)$ is an orthogeometry iff the following hold.

\begin{enumerate}
\item Each non-degenerate triangle is contained in a subspace that is a plane. 
\item Every point has a direction. 
\end{enumerate}
\end{theorem}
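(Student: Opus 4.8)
The strategy is to translate everything to the orthodomain setting of \cite{HHLN} via the correspondences $\mathcal{G}\mapsto\mathcal{G}^*$ and the bijection of directions from Lemma~\ref{dir}, apply the known characterization of orthodomains that arise from orthomodular posets, and then translate back. Concretely, \cite{HHLN} characterizes those (proper) orthodomains $X$ for which $X\cong\B(A)$ for an orthoalgebra $A$; since $A$ is an orthomodular poset exactly when the "plane-completeness" condition $(\mathcal{H}_*)^* = \mathcal{H}$ holds (this is \cite[Prop.~6.7]{HHLN}, quoted in the excerpt), and since we are working with pre-orthogeometries where planes are already a derived notion, that condition is automatic. So the real content is to match conditions (1) and (2) with the axioms on orthodomains from \cite{HHLN}.

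First I would prove the forward direction. If $\mathcal{G}\cong(\mathcal{H}(A))_*$ for an orthomodular poset $A$, then by Lemma~\ref{lk} $\mathcal{G}^*$ is an orthodomain, and in fact $\mathcal{G}^* \cong \mathcal{H}(A) \cong \B(A)$ using $(\mathcal{H}_*)^*=\mathcal{H}$ for hypergraphs of orthomodular posets. The results of \cite{HHLN} then give that every point of $\B(A)$ has a direction (the direction induced by the ambient orthoalgebra structure — whether a generator $a$ sits as atom or coatom), and by Lemma~\ref{dir} this restricts to a direction of $\mathcal{G}$ for each point, giving (2). For (1): a non-degenerate triangle $\{p,q,r\}$ corresponds to three $4$-element Boolean subalgebras $\{0,a,a',1\}$, $\{0,b,b',1\}$, $\{0,c,c',1\}$ that are pairwise "coherent" (each pair lies on a line, i.e.\ the corresponding generators can be chosen orthogonal), and not all on one line. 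In an orthomodular poset pairwise-orthogonal triples generate a $16$-element Boolean subalgebra (this is precisely the point made in the paragraph surrounding \cite[Prop.~6.7]{HHLN}), which gives the plane containing the triangle — one must check the generators can be chosen jointly orthogonal, not merely pairwise, which uses that $A$ is an OMP. So (1) holds.

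The substantive direction is the converse. Assume $\mathcal{G}$ is proper and satisfies (1) and (2). By Lemma~\ref{lk}, $\mathcal{G}^*$ is an orthodomain, and it is proper since $\mathcal{G}$ is. I would then verify that $\mathcal{G}^*$ satisfies whatever additional axioms \cite{HHLN} requires of an orthodomain in order to be reconstructible as $\B(A)$ for an orthomodular poset $A$ — the two salient ones being: (a) every atom has a direction (in the orthodomain sense), which follows from (2) together with Lemma~\ref{dir}; and (b) the "triangle/plane" closure condition that lets three pairwise-compatible atoms span a plane, which is exactly (1) rephrased via the atom–point, height-two-element–line, height-three-element–plane dictionary. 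Here "non-degenerate" precisely excludes the two trivial cases (a repeated atom, or three atoms already under a common height-two element). Applying the reconstruction theorem of \cite{HHLN} yields an orthoalgebra $A$ with $\B(A)\cong\mathcal{G}^*$; the fact that $\mathcal{G}^*$ arises from a pre-orthogeometry (so $(\mathcal{G}^*)_* = \mathcal{G}$ and planes are genuinely derived, i.e.\ $((\mathcal{G}^*)_*)^* = \mathcal{G}^*$) forces $A$ to be an orthomodular poset, again by \cite[Prop.~6.7]{HHLN}. Then $\mathcal{G} = (\mathcal{G}^*)_* \cong (\B(A))_* = (\mathcal{H}(A))_*$, so $\mathcal{G}$ is an orthogeometry.

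The main obstacle I anticipate is bookkeeping at the interface between the two formalisms: making sure that the hypotheses of the relevant reconstruction theorem in \cite{HHLN} are matched exactly, with no extra axiom left unchecked, and in particular confirming that the pre-orthogeometry conditions (1) and (2) are not merely implied by but are equivalent (modulo properness and the automatic plane-completeness) to the orthodomain axioms — otherwise the "iff" fails. A secondary delicate point is the passage from pairwise orthogonality of generators to a joint orthogonal triple in the forward direction; this is where being an orthomodular poset (rather than a general orthoalgebra) is used, and it is the reason planes can be treated as derived. I would handle both by citing the precise definitions \cite[Defn.~4.3, Defn.~4.9]{HHLN} and Proposition~6.7 and checking the dictionary entry by entry.
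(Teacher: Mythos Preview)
Your forward direction is fine and matches the paper. The converse, however, has a genuine gap in how you deploy condition~(1).

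You propose to use (1) as an orthodomain axiom needed for the reconstruction theorem of \cite{HHLN}, and then to deduce that the reconstructed orthoalgebra $A$ is an orthomodular poset from the fact that planes in $\mathcal{G}^*$ are derived, citing \cite[Prop.~6.7]{HHLN}. Both moves are mistaken. First, the reconstruction result \cite[Thm.~5.18]{HHLN} requires only that $\mathcal{G}^*$ be a short proper orthodomain with enough directions; there is no ``triangle/plane closure'' axiom, so condition~(1) plays no role at that stage (Lemma~\ref{lk} already gives the orthodomain structure without it, and (2) via Lemma~\ref{dir} supplies the directions). Second, \cite[Prop.~6.7]{HHLN} is the implication ``$A$ is an OMP $\Rightarrow (\mathcal{H}(A)_*)^* = \mathcal{H}(A)$''; you are invoking the converse, which that proposition does not assert. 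Since $\mathcal{G}^*$ has derived planes \emph{by construction}, your argument would make condition~(1) entirely superfluous, contradicting the ``iff'' in the statement.

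What the paper actually does is use condition~(1) \emph{after} reconstruction, directly, to verify the OMP property: given orthogonal $a,b$ in the reconstructed orthoalgebra $A$ and an upper bound $c$, the elements $a,b,c'$ are pairwise orthogonal, so the points $\{0,a,a',1\}$, $\{0,b,b',1\}$, $\{0,c,c',1\}$ form a non-degenerate triangle in $\mathcal{G}$; condition~(1) then supplies a plane, hence a $16$-element Boolean subalgebra of $A$ containing $a,b,c$, in which $a\oplus b = a\vee b \le c$. That is where (1) enters, and your proposal omits this step.
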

\begin{proof}

Suppose $\mathcal{G}$ is an orthogeometry. Then up to isomorphism, it is the set of 4 and 8-element Boolean subalgebras of an orthomodular poset $A$.
Suppose $p,q,r$ is a non-degenerate triangle of $\mathcal{G}$. Then there are pairwise orthogonal $a,b,c\in A$ with none equal to $0,1$ with $p=\{0,a,a',1\}$, $q=\{0,b,b',1\}$ and $r=\{0,c,c',1\}$. In an orthomodular poset, such pairwise orthogonal elements generate a 16-element Boolean subalgebra, a fact that is not true in general orthoalgebras. So this triangle is contained in a plane of $\mathcal{G}$. Thus (1) holds. 

For (2), we first show that $\mathcal{G}^*$ is isomorphic to the collection of at most 16-element Boolean subalgebras of $A$. Surely any 16-element Boolean subalgebra of $A$ has its 4-element subalgebras being the points of a subspace of $\mathcal{G}$ that are configured as a plane. So each 16-element Boolean subalgebra of $A$ gives a plane of $\mathcal{G}^*$. But by \cite[Prop.~6.7]{HHLN} any configuration of points in $\mathcal{G}$ in the form of a plane arises as the 4-element Boolean subalgebras of some 16-element Boolean subalgebra of $A$. Thus $\mathcal{G}^*$ is isomorphic to the poset of at most 16-element Boolean subalgebras of $A$. Then \cite[Thm.~4.16]{HHLN} has the consequence that each point of the orthodomain $\mathcal{G}^*$ has a direction, and then Lemma~\ref{dir} yields that each point of the pre-orthogeometry $\mathcal{G}$ has a direction. Thus (2) holds. 

For the converse, suppose that (1) and (2) hold for $\mathcal{G}$. By Lemma~\ref{lk} $\mathcal{G}^*$ is an orthodomain, which is clearly proper, since $\mathcal G$ is proper. By Lemma~\ref{dir}, each point of $\mathcal{G}^*$ has a direction. The basic element $\bot$ of any orthodomain always has a direction. Thus $\mathcal{G}^*$ is a short orthodomain (short meaning its height is at most 3 \cite[Defn.~5.13]{HHLN}) with enough directions (meaning each basic element has a direction). Then by \cite[Thm.~5.18]{HHLN} there is an orthoalgebra $A$ with $\mathcal{G}^*$ being isomorphic to the Boolean subalgebras of $A$ with at most 16 elements. Thus $\mathcal{G}$ is isomorphic to the poset of Boolean subalgebras of $A$ with at most 8 elements. 

It remains to show that the orthoalgebra $A$ constructed in the previous paragraph is an orthomodular poset. Orthomodular posets are characterized among orthoalgebras by the property that for orthogonal elements $a,b$ we have that $a\oplus b$ is the least upper bound of $a,b$ and not simply a minimal upper bound \cite[Prop. 1.5.6]{DvurPulm:Trends}. It is sufficient to show this under the assumption that $a,b\neq 0,1$. Suppose $c\in A$ is an upper bound of $a,b$. We must show $a\oplus b\leq c$. We may assume $c\neq 0,1$. Set $p=\{0,a,a',1\}$, $q=\{0,b,b',1\}$ and $r=\{0,c,c',1\}$. Since $a,b,c'$ are pairwise orthogonal, these three points form a non-degenerate triangle in $\mathcal{G}$. So by condition (1) there is a plane of $\mathcal{G}$ that contains them. Then $a,b,c$ lie in a 16-element Boolean subalgebra of $A$, and this yields $a\oplus b\leq c$ since in a Boolean algebra $a\oplus b$ is the join $a\vee b$.
\end{proof}

We turn to the matter of identifying orthogeometries that arise from orthomodular lattices, and from complete orthomodular lattices. If $\mathcal{G}$ is the orthogeometry for an orthomodular poset $A$, then with appropriate operations \cite[Defn.4.15]{HHLN} on the set $\oDir(\mathcal{G}^*)$ of directions of $\mathcal{G}^*$ we have \cite[Thm.~4.16]{HHLN} that $\oDir(\mathcal{G}^*)$ is an orthomodular poset isomorphic to $A$. By Lemma~\ref{dir} there is a bijective correspondence between the directions $\oDir(\mathcal{G})$ of $\mathcal{G}$ and those of $\mathcal{G}^*$. This provides an orthoalgebra structure on $\oDir(\mathcal{G})$ making it isomorphic to $A$. The following consequence of these results will be useful. 

\begin{proposition}\label{ds}
Let $\mathcal{G}$ be the orthogeometry of a proper orthomodular poset $A$. If $d$ is a direction for a point $p$ and $e$ is a direction for a point $q$, then $d\leq e$ iff $d=e$ or $p,q$ are distinct, both lie on  a line $\ell$, $d(\ell)={\downarrow}$, and $e(\ell)={\uparrow}$. The directions $0,1$ are the least and largest directions. 
\end{proposition}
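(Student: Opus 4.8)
The plan is to transport the partial order on $\oDir(\mathcal{G})$ from the partial order on $\oDir(\mathcal{G}^*)$ via the bijective correspondence of Lemma~\ref{dir}, and then unwind what the order on directions of an orthodomain (from \cite[Defn.~4.15]{HHLN}) says in this height-at-most-$3$ situation. Recall that under the identification $\oDir(\mathcal{G}^*)\cong A$, a direction $d$ for a point $p$ corresponds to an element $a_d\in A\setminus\{0,1\}$ with $p=\{0,a_d,a_d',1\}$, where the value $d(\ell)={\uparrow}$ (resp. ${\downarrow}$) on a line $\ell$ records that $a_d$ is an atom (resp. coatom) of the $8$-element Boolean algebra $\ell$; the directions $0,1$ correspond to $0,1\in A$. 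So the order $d\leq e$ must translate to $a_d\leq a_e$ in $A$, and the content of the proposition is precisely the description of when $a\leq b$ holds in $A$ in purely geometric terms.

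First I would dispose of the trivial cases: $0$ and $1$ correspond to $0,1\in A$ and hence are least and largest, and the reflexive case $d=e$ is immediate; also if $p=q$ but $d\neq e$ then $a_d,a_e\in\{0,a,a',1\}\setminus\{0,1\}$ with $a_d\neq a_e$ forces $\{a_d,a_e\}=\{a,a'\}$, which are incomparable, so $d\not\leq e$. This reduces everything to the case that $p,q$ are distinct points and $d,e$ nontrivial directions. Here $a_d\leq a_e$ in $A$ (with both $\neq 0,1$ and $a_d\neq a_e$) holds iff $a_d, a_e$ lie in a common block with $a_d<a_e$, which, since $a_d\neq 0,1,a_e$ and $a_e\neq 0,1$, is witnessed by the chain $a_d<a_e$ generating an $8$-element Boolean subalgebra of $A$; this $8$-element subalgebra is exactly a line $\ell$ of $\mathcal{G}$ containing both $p$ and $q$, with $a_d$ a coatom of $\ell$ (so $d(\ell)={\downarrow}$) and $a_e$ an atom of $\ell$ (so $e(\ell)={\uparrow}$). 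Conversely, given such a line $\ell$ with $d(\ell)={\downarrow}$ and $e(\ell)={\uparrow}$, the line as an $8$-element Boolean subalgebra has $a_d$ as a coatom and $a_e$ as an atom, and by orthomodularity the only way an atom and coatom of an $8$-element Boolean algebra can fail to be comparable is ruled out by a direct check on the three-generator Boolean algebra — in fact in $\{0,a,a',b,b',a\vee b, a'\wedge b',1\}$ the atoms are $a'\wedge b', \ldots$ and their complements are the coatoms; one reads off directly that $d(\ell)={\downarrow}$, $e(\ell)={\uparrow}$ together with $p\neq q$ force $a_d<a_e$. I would make this last step precise by appealing to the explicit description (given in the paragraph before Theorem~\ref{orthogeometry}'s definition of direction, and in the discussion following the definition of orthogeometry) of how atoms and coatoms sit inside an $8$-element Boolean subalgebra: if $p$ and $q$ are the two distinct points on $\ell$ with $p$'s generator a coatom and $q$'s generator an atom, these generators are comparable and the coatom lies below the atom.

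The one genuine subtlety — and what I expect to be the main obstacle — is verifying that the abstract order of \cite[Defn.~4.15]{HHLN} on $\oDir(\mathcal{G}^*)$, when pulled back along the Lemma~\ref{dir} bijection and restricted to nontrivial directions, is really captured by the single-line condition stated, with \emph{no} planes entering. A priori the comparison $d\leq e$ in the orthodomain might be detected only by looking at how $d$ and $e$ restrict to a common plane rather than a common line; I would argue this cannot add anything new because any plane of $\mathcal{G}^*$ containing both $p$ and $q$ must contain the line they span (by the pre-orthogeometry axiom and Lemma~\ref{lk}, where we showed $x\vee y$ is exactly that line), and the comparison is already witnessed on that line. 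Concretely, using $d(z)=(z\wedge u, z\wedge v)$ from \cite[Defn.~4.9]{HHLN}, whatever $d^*$ and $e^*$ do on a plane $w\supseteq\ell$ is determined by what they do on $\ell$, so it suffices to compare principal pairs inside the $8$-element Boolean domain ${\downarrow}\ell$, which is exactly the single-line analysis above. Once this reduction is in hand the proof is a short case check, and I would present it in that order: trivial cases, reduction to a common line, then the explicit Boolean-algebra computation showing coatom-below-atom.
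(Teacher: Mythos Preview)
Your strategy—transport to $A$ via the isomorphism $\oDir(\mathcal{G}^*)\cong A$ of \cite[Thm.~4.16]{HHLN} and read $d\leq e$ as $a_d\leq a_e$—is valid and would yield a correct proof, but the execution contains a pair of cancelling errors. You state that $d(\ell)={\uparrow}$ records that $a_d$ is an atom of $\ell$; the convention is the opposite. One sees this directly from the orthosum: by \cite[Defn.~4.15]{HHLN}, $d\oplus f$ is defined exactly when $d(\ell)={\downarrow}=f(\ell)$ on a common line $\ell$, while on the $A$ side $a_d\oplus a_f$ is defined iff $a_d\perp a_f$, which forces both to be \emph{atoms} of the $8$-element Boolean algebra $\ell$. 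Hence ${\downarrow}$ means atom. You then compound this by writing that $a_d<a_e$ makes $a_d$ a coatom and $a_e$ an atom, and that ``the coatom lies below the atom''—in any finite Boolean algebra atoms are the height-$1$ elements and lie below the coatoms, not above. The two reversals cancel, so you arrive at the correct conclusion, but the intermediate claims are false as written.

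The paper's own proof takes a different and shorter route: it never passes through $A$, but uses that in the orthoalgebra $\oDir(\mathcal{G}^*)$ one has $d^*\leq e^*$ iff $d^*\oplus(e^*)'$ is defined, and then reads the single-line condition straight from \cite[Defn.~4.15]{HHLN}. This bypasses all atom/coatom bookkeeping. Your closing worry—that the comparison might only be visible at the level of planes—is unnecessary in either approach: via $A$ the strict inequality $a_d<a_e$ is already witnessed by the $8$-element subalgebra the two elements generate, and via \cite[Defn.~4.15]{HHLN} the orthosum criterion is phrased entirely in terms of lines.
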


\begin{center}
\begin{tikzpicture}
\draw[fill] (0,0) circle(.05); \draw[fill] (3,0) circle(.05);
\draw (.25,0) -- (2.75,0);
\node at (-.5,0) {$p$}; \node at (3.5,0) {$q$}; \node at (1.5,-.5) {$\ell$}; \draw[->] (1,.25)--(1,-.25); \draw[->] (2,-.25)--(2,.25);
\node at (1,.75) {$d$}; \node at (2,.75) {$e$};
\end{tikzpicture}
\end{center}

\begin{proof}
Let $d,e$ be directions of $\mathcal{G}$. Since $\mathcal{G}$ is an orthogeometry, Lemmas \ref{lk}, \ref{dir} give that the directions of $\mathcal{G}$ extend to directions $d^*,e^*$ of the orthodomain $\mathcal{G}^*$. In the orthomodular poset $\oDir(\mathcal{G}^*)$ we have $d^*\leq e^*$ iff $d^*\oplus (e^*)'$ is defined. By \cite[Defn.~4.15]{HHLN} this occurs iff $p,q$ are distinct points of a line $\ell$ with $d^*(\ell) = (p,\ell)$ and $(e^*)'(\ell)=(q,\ell)$. This is equivalent to having $d(\ell)=\,\downarrow$ and $e(\ell)=\,\uparrow$. 
\end{proof}

For a set $D$ of directions of a proper orthogeometry $\mathcal{G}$ and a direction $e$ of $\mathcal{G}$, we call $e$ a cone of $D$ if $e$ is an upper bound of $D$ in the natural ordering of $\oDir(\mathcal{G})$. 
Note that the direction $1$ is a cone of any set of directions; if $1$ is an element of $D$ then $1$ is the only cone of $D$; every direction is a cone of the empty set of directions; and a direction $e$ is a cone of $D$ iff it is a cone of $D\setminus\{0\}$. 
The case when $D$ is a singleton other than $0,1$ is given by Proposition~\ref{ds}. The remaining cases are addressed by the following. 

\begin{proposition}
For a proper orthogeometry $\mathcal G$, let $D$ be a non-empty set of directions with none equal to 0,1. For each $d\in D$ let $p_d$ be the point such that $d$ is a direction for $p_d$. Then $e$ is a cone of $D$ iff $e=1$ or $e$ is a direction for a point $q$ and the following hold for each $d\in D\setminus\{e\}$
\vspace{1ex}
\begin{enumerate}
\item $q$ is distinct from each $p_d$
\item there is a line $\ell_d$ containing $p_d$ and $q$
\item $d(\ell_d)=\,\downarrow$ and $e(\ell_d)=\,\uparrow$
\end{enumerate}
\end{proposition}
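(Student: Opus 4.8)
The plan is to unwind the definition of cone and appeal to Proposition~\ref{ds}, which records exactly when one direction lies below another. By definition $e$ is a cone of $D$ precisely when $e$ is an upper bound of $D$ in $\oDir(\mathcal{G})$, that is, $d\le e$ for every $d\in D$; so the task is to translate this family of order relations into conditions (1)--(3), handling the directions $0,1$ separately since Proposition~\ref{ds} speaks only of directions for points.

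First I would treat the extremal cases. The direction $1$ is the largest direction, hence a cone of every set of directions, which accounts for the disjunct $e=1$. If $e=0$, then since $0$ is the least direction, $d\le 0$ forces $d=0$; but $D$ is non-empty and, by hypothesis, contains no direction equal to $0$, so $e=0$ is never a cone of $D$. This is consistent with the right-hand side, which asks that $e=1$ or that $e$ be a direction for a point. So it remains to treat the case that $e$ is a direction for some point $q$, and in this case each $d\in D$ is likewise a direction for a point $p_d$, since $D$ contains no direction equal to $0$ or $1$.

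In this remaining case I would apply Proposition~\ref{ds} to each pair $d,e$ with $d\in D$: we have $d\le e$ if and only if $d=e$, or else $p_d$ and $q$ are distinct, lie on a common line $\ell$, and $d(\ell)=\downarrow$ while $e(\ell)=\uparrow$. Because any two points of a pre-orthogeometry lie on at most one line, such a line $\ell$, when it exists, is unique, and I would take it to be the $\ell_d$ of the statement; thus $d\le e$ is equivalent to ``$d=e$, or conditions (1)--(3) hold for $d$''. Forming the conjunction over all $d\in D$, we conclude that $e$ is an upper bound of $D$ if and only if conditions (1)--(3) hold for every $d\in D$ with $d\neq e$, i.e., for every $d\in D\setminus\{e\}$, which is exactly the asserted criterion. (If $e\in D$, the term $d=e$ is automatic and is excluded from $D\setminus\{e\}$.)

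The argument is a direct unwinding, so no step is a genuine obstacle; the only points requiring care are splitting off $e\in\{0,1\}$ before invoking Proposition~\ref{ds}, which does not cover those directions, and observing the uniqueness of the line through $p_d$ and $q$ so that the symbol $\ell_d$ in condition (3) is unambiguous.
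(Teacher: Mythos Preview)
Your proposal is correct and follows essentially the same approach as the paper: both reduce the characterization to Proposition~\ref{ds} applied to each pair $d,e$, after disposing of the extremal cases $e\in\{0,1\}$. Your version is simply more explicit about the edge cases (why $e=0$ is impossible, uniqueness of $\ell_d$, and the disposition of $d=e$), whereas the paper's proof is a two-line appeal to Proposition~\ref{ds}.
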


\begin{proof}
Assume $e\neq 1$. Clearly $e$ cannot be 0, hence it is a direction for a point $q$. If conditions (1)--(3) are satisfied, then by Proposition~\ref{ds} $e$ is a cone of $D$. Conversely, if $e$ is a cone, then by Proposition~\ref{ds} (1)--(3) hold. 
\end{proof}

Call $e$ a minimal cone for a set $D$ of directions if $e$ is the least upper bound of $D$ in $\oDir(\mathcal{G})$. If $e=1$ this means that $e$ is the only cone of $D$. If $e=0$ this means it is a direction of $D$, and this occurs iff any direction in $D$ is 0. Other cases are covered by the following.

\begin{proposition}
Let $D$ be a 
set of directions for a proper orthogeometry $\mathcal{G}$ and let a direction $e$ for a point $q$ be a cone of $D$. Assume that $e$ is neither $0$ or $1$. Then $e$ is a minimal cone for $D$ iff 0 is not a cone of $D$ and for any direction $f\neq e$ for a point $r\neq q$ that is a cone of $D$ there is a line $m$ containing $q,r$ and with $e(m)=\,\downarrow$ and $f(m)=\,\uparrow$. 
\end{proposition}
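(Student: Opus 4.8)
The plan is to push the whole question through the order-isomorphism of $\oDir(\mathcal{G})$ with the orthomodular poset $A$ from which $\mathcal{G}$ arises, using Proposition~\ref{ds} as the dictionary between the order on directions and the combinatorics of lines, and then to read off what ``least upper bound'' means cone by cone.

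First I would dispose of the degenerate alternative. Since $0$ is the least direction and $e\neq 0$, the relation $e\leq 0$ fails; so if $0$ were a cone of $D$, then $e$ would not lie below every cone and hence would not be a minimal cone. Thus $e$ being a minimal cone forces $0$ not to be a cone of $D$, and for the remainder of the argument we may assume $0$ is not a cone of $D$ and must show that $e$ is the least upper bound of $D$ iff for every cone $f\neq e$ for a point $r\neq q$ there is a line $m$ through $q$ and $r$ with $e(m)=\downarrow$ and $f(m)=\uparrow$.

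Next I would classify the cones. By \cite[Thm.~4.16]{HHLN}, Lemma~\ref{dir}, and the definition of an orthogeometry, $\oDir(\mathcal{G})$ is isomorphic to the proper orthomodular poset $A$ with $\mathcal{G}\cong(\mathcal{H}(A))_*$; in it $0$ and $1$ are the bounds, and a point $p=\{0,a,a',1\}$ has exactly two directions, corresponding to $a$ and $a'$, which are orthocomplementary. Hence every cone of $D$ is $1$, or $0$ (now excluded), or a direction for some point. The crucial observation is that, under the standing assumption, $e$ is the \emph{only} cone of $D$ that is a direction for $q$: if some $f\neq e$ were another direction for $q$ that is a cone of $D$, then $f=e'$, so each $d\in D$ would satisfy $d\leq e$ and $d\leq e'$, hence $d\leq e\wedge e'=0$, making $0$ a cone of $D$, a contradiction.

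Finally I would assemble the equivalence. $e$ is a minimal cone iff $e\leq f$ for every cone $f$ of $D$; here the cases $f=1$ and $f=e$ are automatic, $f=0$ is excluded, and by the previous step no cone other than $e$ is a direction for $q$. So minimality is equivalent to $e\leq f$ holding for every cone $f$ that is a direction for some point $r\neq q$ with $f\neq e$, and by Proposition~\ref{ds} (applicable since $\mathcal{G}$ is a proper orthogeometry) such an inequality holds precisely when there is a line $m$ containing $q$ and $r$ with $e(m)=\downarrow$ and $f(m)=\uparrow$ --- exactly the asserted condition. I expect the only step requiring genuine care to be the uniqueness of $e$ among cones that are directions for $q$: it rests on correctly invoking that the two directions at a point are orthocomplementary in $\oDir(\mathcal{G})$ and on the identity $e\wedge e'=0$; everything else is a direct translation through Proposition~\ref{ds}.
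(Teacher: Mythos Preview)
Your proposal is correct and follows essentially the same route as the paper: both arguments reduce to Proposition~\ref{ds}, handle the trivial cones $0,1,e$ separately, and eliminate the one remaining exceptional case---the other direction $e'$ at $q$---by observing that if $e$ and $e'$ were both cones then every $d\in D$ would lie below $e\wedge e'=0$, forcing $0$ to be a cone. Your write-up is simply more explicit than the paper's three-line version.
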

\vspace{1ex}

\begin{center}
\begin{tikzpicture}[scale = 1.0]
\draw[fill] (0,1) circle (.05); \draw[fill] (0,-1) circle (.05); \draw[fill] (1.5,0) circle (.05); \draw[fill] (3.5,0) circle (.05); \draw (.2,.85) -- (1.3,.1); \draw (.2,-.85) -- (1.3,-.1); \draw (.3,1) to [out=0,in=135] (3.4,.2); \draw (.3,-1) to [out=0,in=-135] (3.4,-.2); \draw[dashed] (1.7,0)--(3.3,0);
\node at (-.4,1) {$p_1$}; \node at (-.4,-1) {$p_2$}; \node at (1.5,.4) {$q$}; \node at (3.7,.4) {$r$}; \node at (2,0) {$\downarrow$}; \node at (3,0) {$\uparrow$};
\node at (.8,1) {$\downarrow$}; \node at (2.8,.7) {$\uparrow$};
\node at (.8,-1) {$\downarrow$}; \node at (2.8,-.65) {$\uparrow$};
\node at (.5,.6) {$\downarrow$}; \node at (1,.3) {$\uparrow$};
\node at (.5,-.65) {$\downarrow$}; \node at (1,-.3) {$\uparrow$};
\end{tikzpicture}
\end{center}

\begin{proof}
If $e$ is a minimal cone for $D$ then $0$ cannot be a cone for $D$, and for any $f$ as described we have $e\leq f$ so Proposition~\ref{ds} gives the existence of the line $m$ and the behavior of the directions $e,f$ at $m$. Conversely, if these conditions are satisfied we must show that $e\leq g$ for any cone $g$ of $D$. By assumption, $0$ is not a cone of $D$, clearly $e\leq e$, $e\leq 1$, 
and Proposition~\ref{ds} provides that $e\leq f$ for any $f$ as described in the statement. The remaining case, the direction $e'$ of $q$, cannot occur since this would entail that 
$e,e'$ are cones of $D$, hence $0$ is a cone for $D$. 
\end{proof}

Since minimal cones correspond to least upper bounds, we have the following. 

\begin{theorem} \label{complete}
A proper orthogeometry $\mathcal{G}$ is the orthogeometry of a (complete) orthomodular lattice iff any set of (at least) two directions has a minimal cone. 
\end{theorem}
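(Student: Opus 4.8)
The plan is to move everything over to the orthomodular poset $\oDir(\mathcal{G})$ of directions. Since $\mathcal{G}$ is a proper orthogeometry, by definition it is isomorphic to $(\mathcal{H}(A))_*$ for some orthomodular poset $A$, and, as recorded in the paragraph preceding Proposition~\ref{ds} (using Lemma~\ref{dir} together with \cite[Thm.~4.16]{HHLN}), the set $\oDir(\mathcal{G})$ carries an orthomodular poset structure, with the order described in Proposition~\ref{ds}, under which $\oDir(\mathcal{G})\cong A$. Hence $\mathcal{G}$ is the orthogeometry of an orthomodular lattice precisely when $\oDir(\mathcal{G})$ is a lattice, and the orthogeometry of a complete orthomodular lattice precisely when $\oDir(\mathcal{G})$ is a complete lattice; in each direction this uses only that ``orthogeometry of $B$'' determines $B$ up to isomorphism as an orthomodular poset and conversely.

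Next I would record the elementary order-theoretic reductions. In any bounded orthoposet the orthocomplementation $x\mapsto x'$ is an order-reversing involution, so $a\wedge b=(a'\vee b')'$ whenever $a'\vee b'$ exists, and $\bigwedge S=\bigl(\bigvee\{s':s\in S\}\bigr)'$ whenever the join on the right exists. Thus $\oDir(\mathcal{G})$ is a lattice iff every two-element set of directions has a least upper bound, and $\oDir(\mathcal{G})$ is a complete lattice iff every set of directions has a least upper bound. Then I would translate least upper bounds into minimal cones: by definition a cone of a set $D$ of directions is an upper bound of $D$ in $\oDir(\mathcal{G})$ and a minimal cone is a least upper bound, so $D$ has a minimal cone iff the join of $D$ exists in $\oDir(\mathcal{G})$. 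Since the empty set has minimal cone $0$ and a singleton $\{d\}$ has minimal cone $d$, ``every set of directions has a minimal cone'' is equivalent to ``every set of at least two directions has a minimal cone.'' Combining the three reductions yields exactly the stated equivalences for the lattice and the complete-lattice cases.

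The step that needs the most care is the first one: one must be sure that ``minimal cone,'' a notion phrased entirely inside the pre-orthogeometry via the order of Proposition~\ref{ds}, really coincides with the lattice join of the orthomodular poset $\oDir(\mathcal{G})\cong A$. Concretely this means checking that the bijection $\oDir(\mathcal{G})\leftrightarrow\oDir(\mathcal{G}^*)$ of Lemma~\ref{dir} is an order isomorphism for the relevant orders, and that \cite[Thm.~4.16]{HHLN} indeed equips $\oDir(\mathcal{G}^*)$ with the orthomodular poset order — so that least upper bounds are transported faithfully. Once this bookkeeping is in place, the remaining two steps (the orthocomplement De Morgan argument and the trivial treatment of sets of size at most one) are routine, and the theorem follows.
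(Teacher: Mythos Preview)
Your proposal is correct and follows the same approach as the paper. The paper's own proof is the single sentence preceding the theorem---``Since minimal cones correspond to least upper bounds, we have the following''---and you have simply spelled out the bookkeeping (the identification $\oDir(\mathcal{G})\cong A$, the De~Morgan reduction to joins, and the trivial handling of sets with at most one direction) that the paper leaves implicit.
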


We next extend matters to morphisms. A morphism of hypergraphs \cite[Defn.~7.2]{HHLN} is a partial mapping between their points satisfying certain conditions. These conditions involve the planes of these hypergraphs but can be adapted nearly verbatim to apply to orthogeometries, referring to the planes of the associated hypergraphs. We do not write this translation here, but provide the following more expedient version. 

\begin{definition} \label{orthogeometry morphism}
For orthogeometries $\mathcal{G}_1$ and $\mathcal{G}_2$, a morphism of orthogeometries $\alpha:\mathcal{G}_1\to\mathcal{G}_2$ is a partial function between their points such that this partial mapping is a hypergraph morphism between $\mathcal{G}_1^*$ and $\mathcal{G}_2^*$. 
\end{definition}

As in \cite{HHLN}, write $\alpha(p)=\bot$ when the partial mapping is not defined at $p$. A hypergraph morphism is called proper \cite[Defn.~7.8]{HHLN} when certain conditions involving the image of small sets 
apply. A morphism of orthogeometries will be called proper when the associated morphism of hypergraphs is proper. For $\alpha$ a proper hypergraph morphism $\alpha$, there is an orthoalgebra morphism $f_\alpha$ between the associated orthoalgebras of directions  \cite[Prop.~7.18]{HHLN}. An obvious translation of these results to the current setting gives the following. 


\begin{proposition}
Let $\alpha:\mathcal{G}_1\to\mathcal{G}_2$ be a proper morphism of proper orthogeometries. Then there is an orthoalgebra morphism $f_\alpha:\oDir(\mathcal{G}_1)\to\oDir(\mathcal{G}_2)$. 
\end{proposition}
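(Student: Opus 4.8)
The plan is to reduce the statement to its hypergraph analogue, \cite[Prop.~7.18]{HHLN}, through the correspondence $\mathcal{G}\leftrightarrow\mathcal{G}^*$. First I would note that since $\mathcal{G}_1$ and $\mathcal{G}_2$ are proper orthogeometries, each $\mathcal{G}_i$ is isomorphic to $(\mathcal{H}(A_i))_*$ for a proper orthomodular poset $A_i$, so by \cite[Prop.~6.7]{HHLN} the hypergraph $\mathcal{G}_i^*$ is isomorphic to $\mathcal{H}(A_i)$; in particular, by Lemma~\ref{lk}, each $\mathcal{G}_i^*$ is a proper orthodomain, and the discussion following Theorem~\ref{orthogeometry} identifies its associated orthoalgebra of directions with $\oDir(\mathcal{G}_i^*)$ in the sense of \cite[Defn.~4.15,\,Thm.~4.16]{HHLN}. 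By Definition~\ref{orthogeometry morphism} the partial function $\alpha$ is, word for word, a hypergraph morphism $\mathcal{G}_1^*\to\mathcal{G}_2^*$, and it is a proper hypergraph morphism by the definition of a proper morphism of orthogeometries.

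Next I would apply \cite[Prop.~7.18]{HHLN} to the proper hypergraph morphism $\alpha:\mathcal{G}_1^*\to\mathcal{G}_2^*$. This yields an orthoalgebra morphism, say $g_\alpha:\oDir(\mathcal{G}_1^*)\to\oDir(\mathcal{G}_2^*)$, between the associated orthoalgebras of directions. It then remains to transport $g_\alpha$ along the direction-restriction bijections of Lemma~\ref{dir}. Recall that the orthoalgebra structure on $\oDir(\mathcal{G}_i)$ was defined (in the discussion following Theorem~\ref{orthogeometry}) precisely so that the restriction map $\oDir(\mathcal{G}_i^*)\to\oDir(\mathcal{G}_i)$ is an orthoalgebra isomorphism; write $\rho_i$ for its inverse, the extension map $d\mapsto d^*$ of Lemma~\ref{dir}. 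Then $f_\alpha$ is defined as the composite
\[\oDir(\mathcal{G}_1)\xrightarrow{\ \rho_1\ }\oDir(\mathcal{G}_1^*)\xrightarrow{\ g_\alpha\ }\oDir(\mathcal{G}_2^*)\xrightarrow{\ \rho_2^{-1}\ }\oDir(\mathcal{G}_2),\]
which is an orthoalgebra morphism since each of its three factors is.

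I do not anticipate a genuine obstacle; the substance is verifying that the translation is faithful. The two points needing care are: (i) that the properness hypothesis on $\alpha$ as a morphism of orthogeometries is literally \cite[Defn.~7.8]{HHLN} read off from $\mathcal{G}_1^*$ and $\mathcal{G}_2^*$, so that \cite[Prop.~7.18]{HHLN} applies verbatim; and (ii) that the bijections of Lemma~\ref{dir} respect the operations $\oplus$, $(\cdot)'$, $0$, $1$. Point (ii) holds essentially by construction, since those operations on $\oDir(\mathcal{G}_i)$ were imported from $\oDir(\mathcal{G}_i^*)$ along exactly these bijections; concretely, the operations of \cite[Defn.~4.15]{HHLN} are described in terms of the lines through a point and the values $\uparrow,\downarrow$ a direction assigns to them, and both are preserved when passing between $\mathcal{G}_i$ and $\mathcal{G}_i^*$. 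Finally, I would remark that $f_\alpha$ is canonical: $\oDir(\mathcal{G}_i)$ depends only on $\mathcal{G}_i$ and not on a chosen orthomodular poset $A_i$ presenting it, so no choices enter the construction.
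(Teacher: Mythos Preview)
Your proposal is correct and follows exactly the route the paper intends: the paper does not give a formal proof but simply remarks that ``an obvious translation'' of \cite[Prop.~7.18]{HHLN} to the orthogeometry setting yields the result, and your argument spells out precisely that translation via the correspondence $\mathcal{G}\leftrightarrow\mathcal{G}^*$ and the direction bijections of Lemma~\ref{dir}.
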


This map $f_\alpha$ is described as in \cite[Defn.~7.15]{HHLN}. In essence, for a direction $d$ of a point $p$ and line $\ell$ through $p$ whose image is a line $\ell'$, the direction $f_\alpha(d)$ takes the same value at $\ell'$ as $d$ takes at $\ell$. So $f_\alpha$ is easily computed. Using this map $f_\alpha$ we can describe the orthogeometry morphisms that preserve binary and arbitrary joins. 

\begin{definition}
A proper orthogeometry morphism $\alpha:\mathcal{G}_1\to\mathcal{G}_2$ is (finite) join preserving if it takes a minimal cone $e$ of a (finite) set $D$ of directions to a minimal cone $f_\alpha(e)$ of $f_\alpha[D]$. With an eye to future use with AW*-algebras, we call an orthogeometry morphism that preserves all joins \emph{normal}. 
\end{definition}

By construction, the following is trivial. 

\begin{proposition} \label{lifts}
A proper orthogeometry morphism $\alpha$ between proper orthogeometries preserves (finite) joins iff the associated orthoalgebra morphism $f_\alpha$ preserves (finite) joins.
\end{proposition}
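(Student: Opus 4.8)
The plan is to prove Proposition~\ref{lifts} by unwinding the definition of ``join preserving'' for $\alpha$ in terms of minimal cones, matching it against the definition of ``join preserving'' for $f_\alpha$ in terms of least upper bounds, and invoking the dictionary between directions and orthoalgebra elements established in the paragraph before Proposition~\ref{ds}. Recall that for the orthogeometry $\mathcal{G}$ of a proper orthomodular poset, $\oDir(\mathcal{G})$ carries an orthoalgebra structure (inherited via Lemma~\ref{dir} from $\oDir(\mathcal{G}^*)$ and \cite[Thm.~4.16]{HHLN}) whose order is exactly the order described in Proposition~\ref{ds}, and that minimal cones of a set $D$ of directions are, by our very definition of ``minimal cone,'' precisely the least upper bounds of $D$ in $\oDir(\mathcal{G})$.

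First I would fix a proper orthogeometry morphism $\alpha:\mathcal{G}_1\to\mathcal{G}_2$ between proper orthogeometries and record that $\oDir(\mathcal{G}_i)$ is the orthoalgebra $A_i$ to which $\mathcal{G}_i$ corresponds, and that $f_\alpha:\oDir(\mathcal{G}_1)\to\oDir(\mathcal{G}_2)$ is an orthoalgebra morphism by the preceding Proposition. Then I would argue the two implications. Suppose $\alpha$ preserves finite joins in the sense of the definition. Let $D$ be a finite subset of $\oDir(\mathcal{G}_1)$ with a least upper bound $e$; ``least upper bound'' is synonymous with ``minimal cone,'' so $e$ is a minimal cone of $D$, hence $f_\alpha(e)$ is a minimal cone of $f_\alpha[D]$, i.e.\ the least upper bound of $f_\alpha[D]$ in $\oDir(\mathcal{G}_2)$. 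That is exactly the statement that $f_\alpha$ preserves the join $\bigvee D$. The arbitrary-join case is identical with ``finite'' deleted. The converse is the same chain of equivalences read in the other direction: if $f_\alpha$ preserves (finite) joins and $e$ is a minimal cone of a (finite) set $D$, then $e=\bigvee D$ in $\oDir(\mathcal{G}_1)$, so $f_\alpha(e)=f_\alpha(\bigvee D)=\bigvee f_\alpha[D]$ in $\oDir(\mathcal{G}_2)$, i.e.\ $f_\alpha(e)$ is a minimal cone of $f_\alpha[D]$, which is the definition of $\alpha$ preserving (finite) joins.

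There is genuinely no obstacle here beyond bookkeeping: once one accepts that ``minimal cone of $D$'' and ``least upper bound of $D$ in $\oDir(\mathcal{G})$'' are the same phrase (which is how we defined minimal cone), and that $f_\alpha[D]$ is the set $\{f_\alpha(d):d\in D\}$ whose join in $\oDir(\mathcal{G}_2)$ is $f_\alpha(\bigvee D)$ precisely when $f_\alpha$ preserves the relevant join, the proposition is a tautological reformulation. The only mild subtlety worth a sentence is that an empty or singleton $D$ is handled trivially (a singleton $\{d\}$ has minimal cone $d$, sent to $f_\alpha(d)$, the minimal cone of $\{f_\alpha(d)\}$), so the equivalence holds for all $D$, and in particular the phrase ``any set of (at least) two directions'' in the definitions causes no trouble. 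For this reason the paper correctly flags the result as ``trivial,'' and the proof below is a short paragraph simply making the identifications explicit.

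\begin{proof}
Write $A_i=\oDir(\mathcal{G}_i)$ for the orthoalgebra of directions, $i=1,2$. By the preceding proposition $f_\alpha:A_1\to A_2$ is an orthoalgebra morphism. By definition, a direction $e$ is a minimal cone of a set $D$ of directions precisely when $e$ is the least upper bound of $D$ in the natural ordering of the orthoalgebra $A_i$; equivalently, $e=\bigvee D$ in $A_i$. Suppose $\alpha$ is (finite) join preserving. Let $D\subseteq A_1$ be a (finite) set with a join $\bigvee D=e$. Then $e$ is a minimal cone of $D$, so by hypothesis $f_\alpha(e)$ is a minimal cone of $f_\alpha[D]=\{f_\alpha(d):d\in D\}$, i.e.\ $f_\alpha(e)$ is the join of $f_\alpha[D]$ in $A_2$. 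Hence $f_\alpha\bigl(\bigvee D\bigr)=\bigvee f_\alpha[D]$, so $f_\alpha$ preserves (finite) joins. Conversely, suppose $f_\alpha$ preserves (finite) joins and let $e$ be a minimal cone of a (finite) set $D$ of directions of $\mathcal{G}_1$. Then $e=\bigvee D$ in $A_1$, so $f_\alpha(e)=f_\alpha\bigl(\bigvee D\bigr)=\bigvee f_\alpha[D]$ in $A_2$; thus $f_\alpha(e)$ is the least upper bound of $f_\alpha[D]$, that is, a minimal cone of $f_\alpha[D]$. Therefore $\alpha$ is (finite) join preserving.
\end{proof}
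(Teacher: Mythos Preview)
Your proposal is correct and is exactly what the paper intends: the paper gives no proof beyond the remark ``By construction, the following is trivial,'' and your argument simply spells out that triviality by identifying ``minimal cone of $D$'' with ``least upper bound of $D$ in $\oDir(\mathcal{G})$,'' which is literally the definition given just before the proposition.
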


Theorems~\ref{orthogeometry} and \ref{complete} give a characterization of the sets $\mathcal{G}=(P,L)$ of points and lines that arise as the 4-element and 8-element Boolean subalgebras of an orthomodular poset, \mbox{orthomodular} lattice, and complete orthomodular lattice. Adapting \cite[Thm.~7.20]{HHLN} we have a characterization of orthomodular poset, orthomodular lattice, and complete orthomodular lattice homomorphisms in terms of the orthogeometry morphisms between them satisfying various additional properties. We continue this process for Boolean algebras below. 

\begin{theorem}
An orthogeometry $\mathcal{G}=(P,L)$ is the set of 4-element and 8-element Boolean subalgebras of a Boolean algebra iff any two points lie in a plane of $\mathcal{G}$. Proper orthogeometry morphisms between such orthogeometries correspond to a Boolean algebra homomorphisms. 
\end{theorem}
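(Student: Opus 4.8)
The plan is to reduce both claims to already-established facts about orthomodular posets via the dictionary $\mathcal{G}\leftrightarrow\mathcal{G}^*\leftrightarrow\oDir(\mathcal{G})$. First I would prove the object-level characterization. Suppose $\mathcal{G}$ is the orthogeometry of a Boolean algebra $B$. Since $B$ is in particular an orthomodular poset, $\mathcal{G}$ is an orthogeometry and, as in the proof of Theorem~\ref{orthogeometry}, $\mathcal{G}^*$ is isomorphic to the poset of at most $16$-element Boolean subalgebras of $B$. Given two points $p=\{0,a,a',1\}$ and $q=\{0,b,b',1\}$ of $\mathcal{G}$, the elements $a,b$ generate a Boolean subalgebra of $B$ with at most $16$ elements, which contains the $4$-element subalgebras generated by $a$ and by $b$; hence $p,q$ lie in a plane of $\mathcal{G}$. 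Conversely, suppose every two points of an orthogeometry $\mathcal{G}$ lie in a plane. In particular every non-degenerate triangle lies in a plane and every point has a direction (these already follow from $\mathcal{G}$ being an orthogeometry via Theorem~\ref{orthogeometry}, or one notes the hypotheses of that theorem are subsumed), so by Theorem~\ref{orthogeometry} $\mathcal{G}=(\mathcal{H}(A))_*$ for an orthomodular poset $A$, and $\oDir(\mathcal{G})\cong A$. It then suffices to show $A$ is a Boolean algebra, i.e.\ that any two elements $a,b\in A$ commute (generate a Boolean subalgebra). Translating via Proposition~\ref{ds}: $a,b$ correspond to directions $d,e$ for points $p,q$, and the hypothesis that $p,q$ lie in a plane $w$ of $\mathcal{G}$ means, after unwinding \cite[Defn.~4.9, Defn.~4.15]{HHLN}, that the $4$-element subalgebras generated by $a$ and by $b$ sit inside a common $16$-element Boolean subalgebra of $A\cong\oDir(\mathcal{G})$; hence $a,b$ are compatible. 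An orthomodular poset in which every two elements are compatible is a Boolean algebra \cite[Prop.~1.3.3]{DvurPulm:Trends}.

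Next I would handle the morphism-level claim. Let $\mathcal{G}_1,\mathcal{G}_2$ be orthogeometries of Boolean algebras $B_1,B_2$. Given a proper orthogeometry morphism $\alpha:\mathcal{G}_1\to\mathcal{G}_2$, the preceding proposition (the current-setting translation of \cite[Prop.~7.18]{HHLN}) produces an orthoalgebra morphism $f_\alpha:\oDir(\mathcal{G}_1)\to\oDir(\mathcal{G}_2)$, i.e.\ a morphism $B_1\to B_2$ of orthoalgebras. But an orthoalgebra morphism between Boolean algebras is automatically a Boolean algebra homomorphism: it preserves $0,1$ and orthocomplementation, and for any $a,b\in B_1$ the elements $a\wedge b', a\wedge b, a'\wedge b$ are pairwise orthogonal with sums $a$ and $b$, so $f_\alpha$ preserves these finite sums and hence $f_\alpha(a\vee b)=f_\alpha(a)\vee f_\alpha(b)$ and $f_\alpha(a\wedge b)=f_\alpha(a)\wedge f_\alpha(b)$. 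Conversely, a Boolean algebra homomorphism $g:B_1\to B_2$ restricts to a map of $4$-element and $8$-element Boolean subalgebras (modulo collapsing, which is why $\alpha$ is a partial function), and one checks directly from \cite[Defn.~7.2, Defn.~7.8]{HHLN} that the induced partial map on points is a proper hypergraph morphism $\mathcal{G}_1^*\to\mathcal{G}_2^*$, i.e.\ a proper orthogeometry morphism, with $f_\alpha=g$. Finally one records that these assignments $\alpha\mapsto f_\alpha$ and $g\mapsto\alpha_g$ are mutually inverse, which is exactly the faithfulness/fullness already present in the adaptation of \cite[Thm.~7.20]{HHLN}.

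The main obstacle I anticipate is the converse direction of the object characterization: carefully translating ``$p,q$ lie in a plane of $\mathcal{G}$'' into the statement ``$a,b$ generate a common $16$-element Boolean subalgebra of $A$'' at the level of $\oDir(\mathcal{G}^*)\cong A$. This requires threading through the identification of directions with elements of $A$ in \cite[Thm.~4.16]{HHLN} and the description of the partial operation on directions in \cite[Defn.~4.15]{HHLN}, and verifying that a plane of $\mathcal{G}^*$ indeed corresponds under this identification to the block structure of a $16$-element Boolean subalgebra (which is essentially \cite[Prop.~6.7]{HHLN} again). The morphism equivalence is comparatively routine once \cite[Thm.~7.20]{HHLN} is in hand; the only mild subtlety is bookkeeping the partiality of $\alpha$ against the totality of the Boolean homomorphism $g$, which is resolved exactly as in the orthomodular-lattice case treated just above in the paper.
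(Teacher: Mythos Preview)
Your proposal is correct and is essentially an expanded version of the paper's proof, which is two sentences citing \cite{Kalmbach} for the facts that an orthomodular poset in which any two elements lie in a common Boolean subalgebra is a Boolean algebra, and that orthomodular poset homomorphisms between Boolean algebras are exactly the Boolean algebra homomorphisms. The obstacle you anticipate in the converse direction dissolves immediately, since under the identification established in Theorem~\ref{orthogeometry} a plane of $\mathcal{G}$ literally \emph{is} a $16$-element Boolean subalgebra of $A$, so ``$p,q$ lie in a plane'' already says ``$a,b$ lie in a common Boolean subalgebra of $A$'' with no further unwinding needed.
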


\begin{proof}
This is a simple consequence of the facts \cite{Kalmbach} that an orthomodular poset in which any two elements lie in a Boolean subalgebra is a Boolean algebra, and that the orthomodular poset homomorphisms between Boolean algebras are exactly the Boolean algebra homomorphism between them. 
\end{proof}

\begin{remark}
In the characterizations of sets $\mathcal{G}=(P,L)$ arising as orthomodular posets or as Boolean algebras, the ingredient that carries most weight is the existence of directions. This is a first order property in an appropriate language. So if $\mathcal{G}$ fails to have enough directions, then an application of the compactness theorem shows there is a finite reason for this failure. It would be interesting to see if a description of such $\mathcal{G}$ arising from Boolean algebras could be given without reference to directions, perhaps in terms of a finite number of forbidden configurations. This is an open problem. 
\end{remark}

To conclude, we place results in a categorical context. Let $\mathbf{OA}$ be the category of orthoalgebras and the morphisms between them and $\mathbf{OH}$ be the category of orthohypergraphs and the hypergraph morphisms between them. We recall that an orthohypergraph is a hypergraph that is isomorphic to the hypergraph of some orthoalgebra. The following was established in \cite{HHLN} but with the functor we call $\mathcal{H}$ here being called $\mathcal{G}$ there. 

\begin{theorem}
There is a functor $\mathcal{H}:\mathbf{OA}\to\mathbf{OH}$ that is essentially surjective on objects, injective on objects with the exception of the 1-element and 2-element orthoalgebras, and full and faithful with respect to proper orthoalgebra morphisms and proper hypergraph morphisms. 
\end{theorem}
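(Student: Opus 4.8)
The plan is to display $\mathcal{H}$ explicitly and then obtain each of the four assertions from the corresponding result of \cite{HHLN}; as remarked, the functor $\mathcal{H}$ here is the functor called $\mathcal{G}$ there. On objects I would let $\mathcal{H}(A)=(P,L,T)$ be the hypergraph whose points, lines, and planes are the Boolean subalgebras of $A$ with $4$, $8$, and $16$ elements, respectively \cite[Defn.~6.3]{HHLN}. On a morphism $f\colon A\to B$ of orthoalgebras I would let $\mathcal{H}(f)$ be the partial function on points sending $p=\{0,a,a',1\}$ to $\{0,f(a),f(a)',1\}$ when $f(a)\neq 0,1$, and undefined at $p$ otherwise; this does not depend on whether $p$ is named via $a$ or $a'$, since $f(a')=f(a)'$, and it is a hypergraph morphism in the sense of \cite[Defn.~7.2]{HHLN} by the verification carried out in \cite{HHLN}. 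Preservation of identities and composition is then immediate from the formula. Essential surjectivity on objects needs nothing further: an orthohypergraph is by definition a hypergraph isomorphic to $\mathcal{H}(A)$ for some orthoalgebra $A$.

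For injectivity on objects I would first observe that the set of points of $\mathcal{H}(A)$ is empty exactly when $A$ has no four-element Boolean subalgebra, i.e.\ exactly when $A$ is the one- or two-element orthoalgebra; these are therefore the only objects on which $\mathcal{H}$ could fail to be injective. For any other $A$, the poset obtained by ordering the points, lines, and planes of $\mathcal{H}(A)$ by inclusion and adjoining a bottom element is exactly the poset $\B(A)^*$ of Boolean subalgebras of $A$ with at most $16$ elements, a short orthodomain \cite[Defn.~5.13]{HHLN}. Since \cite{HHLN} reconstructs $A$ up to isomorphism from $\B(A)^*$ as the orthoalgebra $\oDir(\B(A)^*)$ of its directions (cf.\ \cite[Thm.~4.16]{HHLN} for the orthomodular poset prototype and \cite[Thm.~5.18]{HHLN}), any hypergraph isomorphism $\mathcal{H}(A)\cong\mathcal{H}(B)$, with neither $A$ nor $B$ the one- or two-element orthoalgebra, yields $A\cong B$.

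For fullness and faithfulness on proper morphisms I would use the assignment $\alpha\mapsto f_\alpha$ of \cite[Prop.~7.18]{HHLN}, which sends a proper hypergraph morphism $\alpha\colon\mathcal{H}(A)\to\mathcal{H}(B)$ to an orthoalgebra morphism $f_\alpha\colon\oDir(\B(A)^*)\to\oDir(\B(B)^*)$ described in \cite[Defn.~7.15]{HHLN}, and transport it along the isomorphisms $A\cong\oDir(\B(A)^*)$ and $B\cong\oDir(\B(B)^*)$ of the previous paragraph. The two facts to invoke from \cite{HHLN} are that $f$ proper implies $\mathcal{H}(f)$ proper with $f_{\mathcal{H}(f)}$ corresponding to $f$ under those isomorphisms, and that a proper hypergraph morphism between orthohypergraphs is determined by its induced map on directions. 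Granting these, faithfulness is immediate, since $\mathcal{H}(f)=\mathcal{H}(g)$ forces $f_{\mathcal{H}(f)}=f_{\mathcal{H}(g)}$ and hence $f=g$; and fullness follows because, for proper $\alpha$, the morphism $f$ obtained by transporting $f_\alpha$ has $\mathcal{H}(f)$ inducing the same map on directions as $\alpha$, whence $\mathcal{H}(f)=\alpha$.

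The main obstacle is exactly this last pair of facts. The map $\mathcal{H}(f)$ records only the unordered pair $\{f(a),f(a)'\}$ at a point $\{0,a,a',1\}$, not $f(a)$ itself, so to recover $f$ one genuinely needs the direction apparatus, which for a proper orthoalgebra tells $a$ from $a'$ by the way $\{0,a,a',1\}$ sits inside the eight- and sixteen-element blocks above it; and one has to check that properness of $f$ matches the notion of a proper hypergraph morphism in \cite[Defn.~7.8]{HHLN}. All of this is done in \cite{HHLN}; the present theorem is its restatement with $\mathcal{G}$ renamed to $\mathcal{H}$.
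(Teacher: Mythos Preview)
The paper gives no proof of this theorem at all: it is stated as a direct citation of \cite{HHLN}, with only the remark that the functor named $\mathcal{G}$ there is called $\mathcal{H}$ here. Your proposal is therefore consistent with the paper's approach---both defer entirely to \cite{HHLN}---and your more detailed unpacking of which results from \cite{HHLN} supply each clause (Defn.~6.3 and~7.5 for the functor, Thm.~5.18 for reconstruction, Prop.~7.18 and Defn.~7.15 for the morphism correspondence) is accurate and would serve as a reasonable expansion of the bare citation.
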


Let $\mathbf{OMP}$ be the full subcategory of $\mathbf{OA}$ whose objects are orthomodular posets and $\mathbf{OG}$ be the category of orthogeometries isomorphic to ones arising from orthomodular posets and the orthogeometry morphisms between them. If $\mathcal{G}$ is such an orthogeometry, then $\mathcal{G}^*$ is an orthohypergraph with $(\mathcal{G}^*)_*=\mathcal{G}$. Since morphisms of orthogeometries are defined to be partial mappings that are orthohypergraph morphisms between their associated orthohypergraphs $\mathcal{G}^*$, the following is immediate. 

\begin{theorem} \label{jko} 
There is a functor $\mathcal{G}:\mathbf{OMP}\to\mathbf{OG}$ that is essentially surjective on objects, injective on objects with the exception of the 1-element and 2-element orthoalgebras, and full and faithful with respect to proper orthoalgebra morphisms and proper orthogeometry morphisms. 
\end{theorem}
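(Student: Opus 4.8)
The plan is to realize $\mathcal{G}\colon\mathbf{OMP}\to\mathbf{OG}$ as the composite of the restriction of $\mathcal{H}\colon\mathbf{OA}\to\mathbf{OH}$ to orthomodular posets with the plane-forgetting passage $\mathcal{H}\mapsto\mathcal{H}_*$, and then to transport each of the three assertions of the theorem for $\mathcal{H}\colon\mathbf{OA}\to\mathbf{OH}$ stated above along the correspondence between orthohypergraphs and orthogeometries. First I would note that for an orthomodular poset $A$ the hypergraph $\mathcal{H}(A)$ is an orthohypergraph in which any two points lie on at most one line, so $\mathcal{G}(A):=(\mathcal{H}(A))_*$ is a well-defined orthogeometry, and by \cite[Prop.~6.7]{HHLN} we have $(\mathcal{G}(A))^*=\mathcal{H}(A)$. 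For a morphism $f\colon A\to B$ of orthomodular posets, $\mathcal{H}(f)$ is a partial function between the point sets; because $(\mathcal{G}(A))^*=\mathcal{H}(A)$ and $(\mathcal{G}(B))^*=\mathcal{H}(B)$, Definition~\ref{orthogeometry morphism} says that this same partial function is exactly a morphism of orthogeometries $\mathcal{G}(A)\to\mathcal{G}(B)$, which I call $\mathcal{G}(f)$. Functoriality of $\mathcal{G}$ is then inherited verbatim from that of $\mathcal{H}$, and $\mathcal{G}(f)$ is proper precisely when $\mathcal{H}(f)$ is, hence, by the theorem for $\mathcal{H}$, precisely when $f$ is.

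Essential surjectivity on objects is then immediate, since $\mathbf{OG}$ is by definition the category of orthogeometries isomorphic to some $(\mathcal{H}(A))_*=\mathcal{G}(A)$ with $A$ an orthomodular poset. For injectivity on objects, suppose $A,B$ are orthomodular posets with $\mathcal{G}(A)\cong\mathcal{G}(B)$. The set of planes of $\mathcal{K}^*$ is canonically determined by the orthogeometry $\mathcal{K}$ --- they are the subspaces on seven points carrying the plane configuration --- so any isomorphism $\mathcal{G}(A)\to\mathcal{G}(B)$ lifts to an isomorphism of hypergraphs $(\mathcal{G}(A))^*\to(\mathcal{G}(B))^*$, that is, $\mathcal{H}(A)\cong\mathcal{H}(B)$. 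By the theorem for $\mathcal{H}$ this forces $A\cong B$ unless one of $A,B$ is the $1$-element or the $2$-element orthoalgebra; and since each of those has no $4$-element Boolean subalgebra, both have the empty orthogeometry, so they form a genuine collapsed pair for $\mathcal{G}$ as well. Thus $\mathcal{G}$ is injective on objects with exactly the two stated exceptions.

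For fullness and faithfulness on proper morphisms I would argue that the relevant hom-sets simply coincide. Fix orthomodular posets $A,B$. Since $\mathbf{OMP}$ is a full subcategory of $\mathbf{OA}$, the proper morphisms $A\to B$ in $\mathbf{OMP}$ are precisely the proper orthoalgebra morphisms $A\to B$. On the other side, by Definition~\ref{orthogeometry morphism} together with the identifications $(\mathcal{G}(A))^*=\mathcal{H}(A)$ and $(\mathcal{G}(B))^*=\mathcal{H}(B)$, the proper orthogeometry morphisms $\mathcal{G}(A)\to\mathcal{G}(B)$ are, as partial functions on points, exactly the proper hypergraph morphisms $\mathcal{H}(A)\to\mathcal{H}(B)$. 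Since $\mathcal{G}(f)$ has the same underlying partial function as $\mathcal{H}(f)$, the assignment $f\mapsto\mathcal{G}(f)$ on proper morphisms is literally the assignment $f\mapsto\mathcal{H}(f)$, which is a bijection onto the proper hypergraph morphisms by the theorem for $\mathcal{H}$. Hence $\mathcal{G}$ is full and faithful with respect to proper morphisms.

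I do not anticipate a real obstacle: as the surrounding text indicates, the statement is essentially an unwinding of the definition of an orthogeometry morphism. The one point that warrants care is the lifting of isomorphisms in the injectivity step; making it rigorous amounts to checking that $\mathcal{K}\mapsto\mathcal{K}^*$ and $\mathcal{H}\mapsto\mathcal{H}_*$ are mutually inverse, isomorphism-preserving bijections between orthohypergraphs of orthomodular posets and the corresponding orthogeometries, which is exactly where \cite[Prop.~6.7]{HHLN} (giving $(\mathcal{H}_*)^*=\mathcal{H}$ for such $\mathcal{H}$) is used, together with the always-valid identity $(\mathcal{K}^*)_*=\mathcal{K}$.
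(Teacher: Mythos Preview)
Your proposal is correct and follows exactly the approach the paper indicates: the paper states the theorem as ``immediate'' after noting that $(\mathcal{G}^*)_*=\mathcal{G}$, that $(\mathcal{H}_*)^*=\mathcal{H}$ for orthomodular posets via \cite[Prop.~6.7]{HHLN}, and that orthogeometry morphisms are by Definition~\ref{orthogeometry morphism} the hypergraph morphisms between the associated $\mathcal{G}^*$. Your write-up simply unpacks this in detail, transporting each clause of the theorem for $\mathcal{H}$ along the mutually inverse passages $(\,\cdot\,)_*$ and $(\,\cdot\,)^*$; there is no substantive difference in strategy.
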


Let $\mathbf{COML}$ be the category whose objects are complete orthomodular lattices and whose morphisms are ortholattice homomorphisms that preserve all joins. Let $\mathbf{COG}$ be the category of all orthogeometries isomorphic to ones arising from complete orthomodular lattices and the orthogeometry morphisms that preserve all joins. Note that these are non-full subcategories of $\mathbf{COML}$ and $\mathbf{OMP}$ respectively. Proposition~\ref{lifts} shows that the the functor $\mathcal{G}:\mathbf{OMP}\to\mathbf{OG}$ of Theorem~\ref{jko} restricts to a functor between these categories and provides the following. 

\begin{theorem}
The functor $\mathcal{G}:\mathbf{COML}\to\mathbf{COG}$ is essentially surjective on objects, injective on objects with the exception of the 1-element and 2-element orthoalgebras, and full and faithful with respect to proper morphisms in the two categories. 
\end{theorem}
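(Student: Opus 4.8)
The plan is to deduce this theorem from the previously established facts rather than re-proving anything from scratch. The key observation is that Theorem~\ref{jko} already gives a functor $\mathcal{G}\colon\mathbf{OMP}\to\mathbf{OG}$ with all the stated good properties, Theorem~\ref{complete} characterizes which orthogeometries arise from complete orthomodular lattices, and Proposition~\ref{lifts} shows that an orthogeometry morphism preserves all joins precisely when its associated orthoalgebra morphism $f_\alpha$ does. So the entire content of the theorem is that restricting the domain and codomain of the functor $\mathcal{G}$ to the indicated subcategories is well-defined and inherits the four properties (well-definedness, essential surjectivity, injectivity on objects modulo the two trivial algebras, fullness and faithfulness on proper morphisms).

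First I would check well-definedness of the restriction. On objects: if $L$ is a complete orthomodular lattice, then $\mathcal{G}(L)$ lies in $\mathbf{COG}$ by definition of $\mathbf{COG}$. On morphisms: if $f\colon L_1\to L_2$ is an ortholattice homomorphism preserving all joins, then the orthogeometry morphism $\mathcal{G}(f)$ preserves all joins by Proposition~\ref{lifts}, so it is normal and hence a morphism of $\mathbf{COG}$. Functoriality is inherited from $\mathcal{G}\colon\mathbf{OMP}\to\mathbf{OG}$.

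Next I would verify the four properties in turn. Essential surjectivity: given $\mathcal{G}\in\mathbf{COG}$, by definition $\mathcal{G}$ is isomorphic to the orthogeometry of some complete orthomodular lattice $L$, and since $\mathcal{G}$ is an orthogeometry of $L$ we may take this $L$ as the preimage; one must note that $\oDir(\mathcal{G})\cong L$ from the remarks preceding Proposition~\ref{ds} and Theorem~\ref{complete}. Injectivity on objects (modulo the $1$- and $2$-element algebras): this is immediate from the corresponding statement in Theorem~\ref{jko}, since the restricted functor agrees with $\mathcal{G}\colon\mathbf{OMP}\to\mathbf{OG}$ on objects. Faithfulness on proper morphisms: again inherited directly, since $\mathbf{COML}\to\mathbf{COG}$ sends proper morphisms to proper morphisms and $\mathcal{G}\colon\mathbf{OMP}\to\mathbf{OG}$ is faithful on proper morphisms. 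Fullness on proper morphisms is the point that needs a small argument: given a proper, normal (i.e.\ all-joins-preserving) orthogeometry morphism $\alpha\colon\mathcal{G}(L_1)\to\mathcal{G}(L_2)$ between orthogeometries of complete orthomodular lattices, fullness of $\mathcal{G}\colon\mathbf{OMP}\to\mathbf{OG}$ on proper morphisms produces an orthomodular poset homomorphism $f\colon L_1\to L_2$ with $\mathcal{G}(f)=\alpha$ (up to the identifications $L_i\cong\oDir(\mathcal{G}(L_i))$, i.e.\ $f=f_\alpha$); then Proposition~\ref{lifts} applied to the normality of $\alpha$ shows $f=f_\alpha$ preserves all joins, so $f$ is in fact a morphism of $\mathbf{COML}$, and it is automatically an ortholattice homomorphism since on a complete orthomodular lattice a join-preserving orthoposet homomorphism preserves meets via De~Morgan.

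The main obstacle, such as it is, is bookkeeping around the identification of $L$ with $\oDir(\mathcal{G}(L))$ and making sure the ``$f$ recovered from fullness of $\mathcal{G}$'' really is $f_\alpha$ and hence governed by Proposition~\ref{lifts}; this is exactly the coherence implicit in the construction of $f_\alpha$ recalled after Proposition~\ref{lifts}. Once that identification is in place, every clause of the theorem reduces to a clause already proved, so the proof is short. I would write it as: ``This follows from Theorems~\ref{jko} and \ref{complete} together with Proposition~\ref{lifts}. Indeed, $\mathbf{COML}$ and $\mathbf{COG}$ are (non-full) subcategories of $\mathbf{OMP}$ and $\mathbf{OG}$, and Proposition~\ref{lifts} shows the functor $\mathcal{G}$ of Theorem~\ref{jko} maps the one into the other. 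Essential surjectivity, injectivity on objects, fullness, and faithfulness are then inherited from Theorem~\ref{jko}, using Theorem~\ref{complete} to identify the objects of $\mathbf{COG}$ and Proposition~\ref{lifts} once more to see that a proper normal orthogeometry morphism is the image of a join-preserving ortholattice homomorphism.''
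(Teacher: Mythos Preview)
Your proposal is correct and is essentially the same as the paper's approach: the paper simply observes in the paragraph preceding the theorem that $\mathbf{COML}$ and $\mathbf{COG}$ are non-full subcategories of $\mathbf{OMP}$ and $\mathbf{OG}$, and that Proposition~\ref{lifts} shows the functor $\mathcal{G}$ of Theorem~\ref{jko} restricts appropriately, so the theorem follows. Your write-up is in fact more detailed than the paper's, making explicit the one nontrivial point (fullness via Proposition~\ref{lifts}) that the paper leaves implicit.
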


\section{Operator algebras} In this section we lift results of the previous section to the setting of certain operator algebras known as AW*-algebras. We begin by recalling that a C*-algebra $M$ is a complete normed complex vector space with additional multiplication, unit $1_M$, and involution $*$, subject to certain well-known properties \cite{KR1}. We would like to emphasize that all our C*-algebras are assumed to be unital. A \emph{*-homomorphism} between C*-algebras is a linear map $\varphi:M\to N$ that preserves multiplication, involution, and units. We refer to \cite{KR1,Takesaki1} for details on operator algebras, and restrict ourselves to giving the appropriate definitions. 




\begin{definition}
A subset $N$ of a C*-algebra $M$ is a \emph{C*-subalgebra} of $M$ if it is a linear subspace that is closed under multiplication, involution, contains the unit $1_M$, and is additionally a closed subset in the metric topology induced by the norm. 
\end{definition}

We note that the condition that a C*-subalgebra contains the unit is important here and is not always assumed in the operator algebras literature.

\begin{definition}
Let $M$ be a C*-algebra, and let $x\in M$. Then $x$ is called \emph{self adjoint} if $x^*=x$; \emph{positive} if $x=y^2$ for some self adjoint $y\in M$; and a \emph{projection} if $x^2=x=x^*$. We denote the set of self-adjoint elements in $M$ by $M_\sa$, and the projections in $M$ by $\Proj(M)$.
\end{definition}	


A key fact here is the following, which holds more generally for projections of any *-ring. 

\begin{proposition}
For any C*-algebra $M$, its projections $\Proj(M)$ form an orthomodular poset when ordered by $p\leq q$ iff $pq=p$ and with orthocomplementation $p'=1_M-p$. Moreover, if two projections $p$ and $q$ commute, their join $p\vee q$ is given by $p+q-pq$.
\end{proposition}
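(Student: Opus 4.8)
The plan is to verify the order-theoretic and algebraic claims directly from the $*$-ring structure, since nothing topological is needed. First I would check that the relation $p \leq q \iff pq = p$ is a partial order on $\Proj(M)$. Reflexivity is immediate since $p^2 = p$. For antisymmetry, if $pq = p$ and $qp = q$, then taking adjoints in $pq = p$ gives $qp = p$ (using $p^* = p$, $q^* = q$), so $p = qp = q$. For transitivity, from $pq = p$ and $qr = q$ we get $pr = (pq)r = p(qr) = pq = p$. I would also note the symmetric characterization: $pq = p$ iff $qp = p$, obtained by taking adjoints, which is convenient below.

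Next I would identify $0$ and $1_M$ as least and greatest elements, and verify that $p \mapsto p' = 1_M - p$ is an orthocomplementation: it is clearly an involution, it reverses order (if $pq = p$ then $(1-p)(1-q) = 1 - p - q + pq = 1 - q$, so $q' \leq p'$), and $p \wedge p' = 0$ since any projection $r \leq p, p'$ satisfies $r = rp$ and $r = r(1-p) = r - rp = 0$; dually $p \vee p' = 1_M$. The main work is the orthomodular law and, where it applies, the formula for joins of commuting projections. For commuting $p, q$, one checks that $r := p + q - pq$ is self-adjoint and idempotent (using $pq = qp$, expand $r^2$ and collect terms), hence a projection; that $p \leq r$ and $q \leq r$ (compute $pr = p^2 + pq - p^2q = p + pq - pq = p$, and symmetrically $qr = q$); and that $r$ is below any common upper bound $s$ of $p, q$ (from $ps = p$, $qs = q$ we get $rs = ps + qs - pqs = p + q - pq = r$). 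So $p \vee q = p + q - pq$ when $p, q$ commute. For the orthomodular law: if $p \leq q$, I would show $q = p \vee (q \wedge p')$. Here $q$ and $p'$ commute (since $p \leq q$ forces $pq = qp = p$, hence $p'$ and $q$ commute), and $q \wedge p' = q p' = q - p$ is a projection below $p'$, so it equals $q \wedge p'$; then $p \vee (q - p) = p + (q-p) - p(q-p) = q - pq + p^2 = q$, using $p \leq q$. To finish establishing that $\Proj(M)$ is an orthomodular \emph{poset}, I would recall that one must check $p \oplus q := p + q$ exists (i.e.\ is a projection, equivalently a least upper bound) whenever $p \leq q'$, i.e.\ $pq = 0$; but $pq = 0$ together with self-adjointness gives $qp = 0$ too, so $p, q$ commute and the join formula yields $p \vee q = p + q - pq = p + q$, which is then a projection.

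The step I expect to be the main obstacle is not any single computation — all of these are short $*$-ring manipulations — but rather being careful about what precisely must be verified for an orthomodular poset versus merely an orthoposet: one needs the partial orthogonality addition $\oplus$ to be well-defined and to give the least upper bound on orthogonal pairs, and one needs the orthomodular identity. I would handle both via the observation that orthogonal projections, and more generally any comparable pair, commute, which reduces everything to the commuting-join formula $p \vee q = p + q - pq$ proved above. A clean way to organize the write-up is therefore: (i) $\leq$ is a partial order with the symmetric description of $pq = p$; (ii) $'$ is an orthocomplementation; (iii) commuting projections have join $p + q - pq$; (iv) orthogonality $pq = 0$ implies commuting, so $\oplus = +$ on orthogonal pairs and gives a least upper bound; (v) the orthomodular law follows by another application of (iii). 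Steps (iii)–(v) carry the real content; the rest is bookkeeping.
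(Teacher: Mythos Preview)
The paper does not actually prove this proposition: it is stated without proof, prefaced only by the remark that ``the following \ldots\ holds more generally for projections of any $*$-ring.'' Your proposal supplies exactly such a proof, deriving everything from the $*$-ring identities with no analytic input, and the computations are correct. The organization you suggest---first the commuting-join formula, then orthogonality $\Rightarrow$ commuting, then the orthomodular law as a special case---is clean; the only cosmetic point is that in the order-reversal step you compute $(1-p)(1-q)=1-q$, which literally says $p'q'=q'$ rather than $q'p'=q'$, but you have already noted the symmetric characterization $pq=p\iff qp=p$, so this is harmless.
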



The \emph{Jordan product} of a C*-algeba, a symmetrized version of the ordinary product, that is given by 
$$x\circ y=\frac{xy+yx}{2}$$
It is easily seen that a map $\varphi:M\to N$ that preserves linear structure and units preserves Jordan multiplication if and only if it preserves squares, and this provides an expedient way to define the appropriate morphisms. 

\begin{definition}
For C*-algebras $M$ and $N$, a \emph{Jordan homomorphism} $\varphi:M\to N$ is a linear map that preserves the unit, involution $*$, and squares: $\varphi(x^2)=\varphi(x)^2$. 
\end{definition}

Since a Jordan homomorphism preserves involution, it restricts to a morphism $M_\sa\to N_\sa$. Conversely \cite{KR1}, a linear map $M_\sa\to N_\sa$ that preserves units, the involution, and squares can uniquely be extended to a Jordan homomorphism $M\to N$. It is obvious that a *-homomorphism is a Jordan homomorphism, and that between commutative C*-algebras the notions coincide. A deeper look refines this result and is provided in the following. Here we recall \cite{Stormer} that a linear map $\varphi:M\to N$ between C*-algebras is \emph{positive} if it preserves positive elements, \emph{$n$-positive} if the associated map between matrix algebras $\mathrm{M}_n(\varphi):\mathrm{M}_n(M)\to\mathrm{M}_n(N)$ is positive, and \emph{completely positive} if it is $n$-positive for each $n$. 

\begin{lemma}\label{lem:Jordan homomorphism}
Suppose that $M$ and $N$ are C*-algebras. If $M$ is commutative, then any Jordan homomorphism $\varphi:M\to N$ is a *-homomorphism.
\end{lemma}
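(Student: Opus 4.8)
The plan is to exploit the standard fact that a commutative C*-algebra is $*$-isomorphic to $C(X)$ for a compact Hausdorff space $X$, together with the observation that a Jordan homomorphism out of $C(X)$ is automatically positive, and that a positive unital linear map whose domain is commutative is completely positive, hence multiplicative by the structure of such maps. More directly, I would argue as follows.

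First I would reduce to the case $M = C(X)$ via the Gelfand--Naimark theorem; since a Jordan homomorphism composed with a $*$-isomorphism is again a Jordan homomorphism, no generality is lost. Next I would show $\varphi$ is positive: a positive element of $C(X)$ has the form $f = g^2$ with $g = g^*$, so $\varphi(f) = \varphi(g)^2$, and since $\varphi$ preserves the involution $\varphi(g)$ is self adjoint, whence $\varphi(g)^2$ is positive in $N$. Then I would invoke the classical theorem (see \cite{Stormer} or \cite{KR1}) that every positive unital linear map from a commutative C*-algebra into an arbitrary C*-algebra is completely positive; in particular it is $2$-positive. The key elementary consequence of $2$-positivity is a Kadison--Schwarz inequality $\varphi(x)^*\varphi(x) \le \varphi(x^*x)$ for all $x \in M$.

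With the Schwarz inequality in hand, the multiplicativity comes out by a now-standard argument. For self adjoint $x$, expanding $\varphi((x - \varphi(x)\cdot 1)^2)$ — which makes sense because squares are preserved — and comparing with the inequality applied suitably forces equality in the Schwarz inequality at $x$; the equality case of the Kadison--Schwarz inequality for a $2$-positive map characterizes the ``multiplicative domain'', and one checks that self adjoint elements of a commutative domain always lie in it. Concretely, for self adjoint $x,y$ one gets $\varphi(xy) = \varphi(x)\varphi(y)$ from the multiplicative-domain property, and then linearity plus the decomposition of a general element into self adjoint parts (writing $x = a + ib$ with $a,b$ self adjoint, so $xy$ expands into products of self adjoint elements) extends multiplicativity to all of $M$. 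Combined with the fact that $\varphi$ already preserves the unit and the involution by hypothesis, this shows $\varphi$ is a $*$-homomorphism.

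The main obstacle is justifying that positivity of $\varphi$ upgrades to (at least) $2$-positivity; this is not formal and relies on the cited structure theory for positive maps on commutative C*-algebras (one writes a positive unital map on $C(X)$ as a weak-$*$ integral of point evaluations against a positive operator-valued measure, and each point evaluation is a $*$-homomorphism, hence completely positive, and complete positivity is preserved under such integrals). An alternative route that avoids invoking complete positivity is to work directly with the commutativity of $M$: for commuting self adjoint $x, y$ in $C(X)$ one has the polarization-type identity $xy = \frac14\bigl((x+y)^2 - (x-y)^2\bigr)$, and since $\varphi$ preserves all three squares and is linear, $\varphi(xy) = \frac14\bigl(\varphi(x+y)^2 - \varphi(x-y)^2\bigr) = \varphi(x)\varphi(y)$ immediately; this gives multiplicativity on $M_\sa$ directly from the Jordan condition and commutativity of $M$, with no positivity needed, and then one extends to all of $M$ by linearity as above. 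I would present this second, more self-contained argument as the proof.
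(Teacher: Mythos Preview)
Your second, ``self-contained'' polarization argument has a genuine gap. The identity $xy = \tfrac14\bigl((x+y)^2 - (x-y)^2\bigr)$ is correct for commuting $x,y$, and applying $\varphi$ and the square-preserving property does give
\[
\varphi(xy) = \tfrac14\bigl((\varphi(x)+\varphi(y))^2 - (\varphi(x)-\varphi(y))^2\bigr).
\]
But when you expand the right-hand side in $N$ you get $\tfrac12\bigl(\varphi(x)\varphi(y) + \varphi(y)\varphi(x)\bigr) = \varphi(x)\circ\varphi(y)$, \emph{not} $\varphi(x)\varphi(y)$. You have only reproved that $\varphi$ preserves the Jordan product, which was the hypothesis. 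The missing step---that $\varphi(x)$ and $\varphi(y)$ commute---is exactly the content of the lemma, and nothing in the polarization touches it. (The expression $\varphi((x-\varphi(x)\cdot 1)^2)$ in your sketch is also ill-formed: $\varphi(x)\cdot 1$ lives in $N$, not $M$.)

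Your first approach, by contrast, is essentially correct and is very close to the paper's proof. Both arguments observe that $\varphi$ is positive (from square-preservation) and invoke the theorem that a positive map out of a commutative C*-algebra is completely positive. The paper then embeds $N$ into $B(H)$ and cites St{\o}rmer's results that a $2$-positive unital map into $B(H)$ is non-extendible and that non-extendible Jordan homomorphisms are $*$-homomorphisms. Your route via the multiplicative domain is a legitimate alternative for the last step: for self-adjoint $x$ the Jordan condition gives $\varphi(x^*x)=\varphi(x)^2=\varphi(x)^*\varphi(x)$, so every self-adjoint element lies in the multiplicative domain of the Schwarz map $\varphi$, and multiplicativity follows. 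If you present this approach, cite the multiplicative-domain theorem precisely rather than the heuristic about ``forcing equality in Schwarz''.
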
	

\begin{proof}
By the Gelfand-Naimark theorem, we can embed $N$ into $B(H)$ for some Hilbert space $H$, and then view $\varphi$ as a Jordan homomorphism from $M$ to $B(H)$. Since $\varphi$ preserves squares, it is positive, and since $M$ is commutative, it follows from \cite[Thm. 1.2.5]{Stormer} that $\varphi$ is completely positive, hence certainly $2$-positive. By \cite[Thm. 3.4.4]{Stormer}, $\varphi$ is so-called \emph{non-extendible}. It follows from \cite[Thm. 3.4.5]{Stormer} that $\varphi$ is a *-homomorphism, which is clearly still a *-homomorphism if we restrict its codomain to $N$.	
\end{proof}

We turn to the class of C*-algebras that play the feature role here, AW*-algebras. These were introduced by Kaplansky \cite{Kaplansky} to give an algebraic formulation of the key concept of von Neumann algebras, also known as W*-algebras, that they have a large supply of projections and their projections form a complete orthomodular lattice. However, even in the commutative case AW*-algebras are much more general than their von Neumann counterparts. We begin with a definition close to Kaplansky's original. 

\begin{definition} \label{oppp}
A C*-algebra $M$ is an \emph{AW*-algebra} if its projections $\Proj(M)$ are a complete orthomodular lattice and each maximal commutative C*-subalgebra of $M$ is generated by its projections. 
\end{definition}
Here a C*-subalgebra $C$ of $M$ is said to be generated by a subset $S$ of if it is equal to the smallest C*-subalgebra of $M$ that contains $S$. This smallest C*-subalgebra, which we will denote by $C*(S)$, exists because the intersection of C*-subalgebras is a C*-subalgebra. Since we assumed that all C*-subalgebras of $M$ contain $1_M$, it follows that $C^*(S)$ will contain $1_M$ as well, even when $S$ does not contain the unit of $M$.

There is a substantially different way to get to AW*-algebras. A *-ring $A$ is a \emph{Baer*-ring} if for each non-empty subset $S$ of $A$ the \emph{right annihilator} 
\[R(S)=\{x\in A:sx=0\text{ for each }s\in S\}\] of $S$ is a principal right ideal of $A$ generated by a projection $p$, i.e., $R(S)=pA$. 
The following is given in \cite[Thm. III.1.8.2]{Blackadar}.

\begin{theorem}
A C*-algebra is a AW*-algebra if and only if it is a Baer*-ring. 
\end{theorem}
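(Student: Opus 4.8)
This is a classical theorem of Kaplansky, and I would prove the two implications separately, the second being the substantial one.

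For the ``if'' direction, assume $M$ is a Baer*-ring. Completeness of $\Proj(M)$ is quick: for a family $\{p_i\}$ of projections, $p_ix=0$ is equivalent to $x=(1-p_i)x$, so the right annihilator of $\{p_i\}$ equals $\bigcap_i(1-p_i)M$, which by the Baer condition is $qM$ for a projection $q$, and one checks that $q$ is the meet $\bigwedge_i(1-p_i)$ in $\Proj(M)$; hence all meets, and so (via orthocomplementation) all joins, exist, so $\Proj(M)$ is a complete orthomodular lattice. For the second clause of Definition~\ref{oppp}, fix a maximal commutative C*-subalgebra $C$ and a self-adjoint $a\in C$. Applying the Baer condition to the positive part $(a-\lambda 1)_+$ for each $\lambda\in\mathbb{R}$ produces the least projection $E_\lambda$ with $E_\lambda(a-\lambda 1)_+=(a-\lambda 1)_+$; a short argument shows that anything commuting with $(a-\lambda 1)_+$ — in particular any element of $C$ — commutes with $E_\lambda$, so $E_\lambda\in C$ by maximality, and the analogous argument shows $\Proj(C)$ is complete. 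Then $C\cong C(Y)$ for a Stonean space $Y$, each $E_\lambda$ is a spectral projection of $a$, and norm-convergent Riemann--Stieltjes sums built from the $E_\lambda$ recover $a$; since $C$ is the closed span of its self-adjoint elements, $C=C^*(\Proj(C))$, and $M$ is an AW*-algebra.

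For the ``only if'' direction, assume $M$ is an AW*-algebra and let $\varnothing\neq S\subseteq M$; I want a projection $p$ with $R(S)=pM$. First I would show $M$ is a Rickart *-ring, i.e.\ $R(\{x\})=pM$ for every singleton: since $sx=0$ iff $s^*sx=0$ (from $\|sx\|^2=\|x^*s^*sx\|$), it suffices to handle $R(\{a\})$ for positive $a$, and choosing a maximal commutative C*-subalgebra $C\ni a$ — which, see below, is monotone complete — the element $a$ acquires a range projection $e=\mathrm{RP}(a)\in C\subseteq M$, after which $R(\{a\})=(1-e)M$: the inclusion $\supseteq$ is $a(1-e)=0$, and for $\subseteq$, from $ax=0$ one gets $a^{1/2}x=0$, hence $f(a)x=0$ for every continuous $f$ with $f(0)=0$, and letting $f_n\nearrow\chi_{(0,\infty)}$ with $f_n(a)\nearrow e$ yields $ex=0$ by normality of $w\mapsto x^*wx$. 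To upgrade the Rickart property, together with completeness of $\Proj(M)$, to the Baer property, set $e_s=\mathrm{RP}(s^*s)$ and $e=\bigvee_{s\in S}e_s$; then $R(S)=\bigcap_s R(\{s^*s\})=\{x : e_sx=0\text{ for all }s\in S\}$, and since $\{f\in\Proj(M): fx=0\}$ equals the principal down-set $\{f : f\le 1-\mathrm{LP}(x)\}$ — here one uses that $fx=0$ iff $f(xx^*)f=0$, together with the implication just proved applied to $(xx^*)^{1/2}$ — this set is closed under arbitrary joins, so $ex=0$ and $R(S)=(1-e)M$.

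The step I expect to be the real obstacle is the claim, used in the last paragraph, that a maximal commutative C*-subalgebra $C$ of an AW*-algebra is monotone complete — equivalently, that $\Proj(C)$ is a complete Boolean algebra, so that $C\cong C(X)$ for a Stonean $X$. The plan there is to use that a maximal commutative $C$ equals its own commutant, reducing the task to showing that the $\Proj(M)$-supremum of a family of projections each commuting with a fixed projection $f$ again commutes with $f$; this follows, on splitting the family along $f$ and $1-f$, from the lattice fact that in a complete orthomodular lattice the elements compatible with a fixed element are closed under arbitrary joins, after which one invokes the classical identification of commutative AW*-algebras with the algebras $C(X)$ of continuous functions on Stonean spaces (and the standard normality of the inner maps $w\mapsto x^*wx$ on monotone complete C*-algebras, used above). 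Full details of all of this can be found in \cite{Blackadar,Kaplansky}.
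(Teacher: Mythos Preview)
The paper does not prove this statement at all: it simply records it as ``given in \cite[Thm.~III.1.8.2]{Blackadar}'' and moves on. Your sketch therefore goes well beyond what the paper offers, and the architecture you lay out --- completeness of $\Proj(M)$ from the Baer condition, spectral projections of a self-adjoint element landing in a masa via the commutant argument, Rickart for singletons using the range projection in a masa, and the upgrade to Baer via $e=\bigvee_{s}\mathrm{RP}(s^*s)$ together with the identification $\{f\in\Proj(M):fx=0\}=\down(1-\mathrm{LP}(x))$ --- is the standard one and is essentially correct.

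There is, however, one step that does not go through as written. In the ``only if'' direction, from $f_n(a)x=0$ and $f_n(a)\nearrow e$ in $C$ you conclude $ex=0$ by appealing to ``normality of $w\mapsto x^*wx$ on monotone complete C*-algebras''. But this map lands in $M$, not in $C$, and whether an arbitrary AW*-algebra (in the sense of Definition~\ref{oppp}) is monotone complete is a long-standing open problem; you cannot assume the relevant supremum exists in $M_\sa$, and the special value you want, $x^*ex=\sup_n x^*f_n(a)x=0$, is exactly the conclusion you are after. The usual repair avoids monotone completeness of $M$ altogether and stays inside $\Proj(M)$: pass to spectral projections $e_n\in C$ together with $c_n\in C$ satisfying $c_na=e_n$, so that $e_nx=c_nax=0$; setting $y=ex$ one then has $r_ny=0$ with $r_n=1-e+e_n\uparrow 1$ in $\Proj(M)$, and placing $yy^*$ in a \emph{second} masa $D$ with spectral data $d_m(yy^*)=g_m$ gives $g_mr_n=d_m(yy^*)r_n=0$, hence $g_m\le e-e_n$ for every $n$, hence $g_m=0$, hence $yy^*=0$. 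This is precisely the two-masa device you already deploy in your final step, so once you invoke it here as well the outline is complete.
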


For an element $x$ in a Baer*-ring $A$ the right annihilator $R(\{x\})$ is a principal ideal generated by a projection $p$. We define the \emph{right projection} $RP(x)$ to be $1_A-p$. Using this, we have the following \cite[Defn's 4.3, 4.4]{Berberian}. 

\begin{definition}\label{subalgebra}
If $A$ is a Baer*-ring, then a *-subring $B$ of $A$ is a \emph{Baer*-subring} if (i) $x\in B$ implies $RP(x)\in B$ and (ii) every non-empty set of projections in $B$ has its supremum in the projection lattice of $A$ belong to $B$. If $M$ is an AW*-algebra, then an \emph{AW*-subalgebra} of $M$ is a C*-subalgebra $N$ of $M$ that is also a Baer*-subring. 
\end{definition}

There is an alternate description of AW*-subalgebras that is very useful \cite[Exercise 4.21]{Berberian}.




\begin{proposition}
Let $M$ be an AW*-algebra and $N$ be a C*-subalgebra of $M$. Then $N$ is an AW*-subalgebra of $M$ if and only if $N$ is an AW*-algebra and for any orthogonal set of projections in $N$ its join taken in the projection lattice of $M$ belongs to $N$. 
\end{proposition}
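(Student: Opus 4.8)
The plan is to prove the two directions separately, leaning on the two characterizations of AW*-subalgebras already in hand: Definition~\ref{subalgebra} (closure under $RP$ and under arbitrary suprema of projections) and the fact that an AW*-algebra is precisely a C*-algebra that is a Baer*-ring. First I would handle the ``only if'' direction. Assume $N$ is an AW*-subalgebra of $M$. The closure condition on orthogonal joins is then an immediate special case of condition (ii) of Definition~\ref{subalgebra}, so the real content is showing that $N$ is an AW*-algebra in its own right, i.e.\ a Baer*-ring. Given a nonempty $S\subseteq N$, consider its right annihilator $R_M(S)=pM$ in $M$ with $p$ a projection of $M$; the standard Baer*-subring argument (as in Berberian) shows that because $N$ is closed under $RP$, the annihilator $R_N(S)=R_M(S)\cap N$ equals $qN$ for a projection $q\in N$, so $N$ is a Baer*-ring and hence an AW*-algebra.

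For the ``if'' direction, assume $N$ is a C*-subalgebra of $M$ that is itself an AW*-algebra and that $N$ is closed under joins of orthogonal families of projections computed in $\Proj(M)$. I need to verify conditions (i) and (ii) of Definition~\ref{subalgebra}. For (i): given $x\in N$, since $N$ is an AW*-algebra the right projection $RP_N(x)$ exists in $N$; the point is that $RP_N(x)$ coincides with $RP_M(x)$. This follows because $R_N(\{x\})=R_M(\{x\})\cap N$ and the generating projection of the $M$-annihilator, being the complement of $RP_M(x)$, must agree with that of the $N$-annihilator once one checks that $1_M-RP_M(x)\in N$ — and this is where I would use that $N$ already contains enough projections, namely by writing $1-RP_M(x)$ as a supremum obtained from the functional calculus of $x^*x$ inside $N$. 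For (ii): given a nonempty set $\{p_i\}$ of projections in $N$, I want $\bigvee_i p_i$ (taken in $\Proj(M)$) to lie in $N$. The trick is to reduce an arbitrary join to an orthogonal one: by Zorn's lemma / a maximality argument one replaces $\{p_i\}$ by an orthogonal family $\{q_j\}$ of projections, each $q_j$ built from the $p_i$ by operations ($p\vee q - p = p\wedge q'$ relative differences, or via $RP$) that stay inside $N$, with $\bigvee_j q_j=\bigvee_i p_i$; then the orthogonal-join hypothesis puts the supremum in $N$.

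The main obstacle I anticipate is the second half of condition (ii): passing from closure under \emph{orthogonal} joins to closure under \emph{arbitrary} joins of projections. Orthogonalizing an arbitrary family while staying inside $N$ requires knowing that finite joins $p\vee q$ of commuting — and more importantly non-commuting — projections in $N$ land in $N$, and for non-commuting projections $p\vee q$ is not given by a simple polynomial, so one must argue via the Baer*-structure of $N$ (e.g.\ $p\vee q = RP(p+q)$ type identities, valid in any AW*-algebra) and check each such operation is performed identically in $N$ and in $M$. Establishing this internal/external consistency of the lattice operations — that suprema and right projections computed in $N$ agree with those computed in $M$ — is the technical heart of the argument; once it is in place, both directions close routinely. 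I would expect the cleanest writeup to isolate a preliminary lemma: \emph{if $N$ is a C*-subalgebra of $M$ that is an AW*-algebra and is closed under orthogonal joins from $M$, then $RP_N = RP_M|_N$ and finite joins in $\Proj(N)$ agree with those in $\Proj(M)$}, and then deduce the proposition.
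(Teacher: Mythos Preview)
The paper does not actually prove this proposition: it is stated with the citation \cite[Exercise~4.21]{Berberian} and no argument is given. So there is no in-paper proof to compare against; you are effectively reconstructing Berberian's exercise.

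Your outline is sound and is essentially the intended argument. A couple of remarks to sharpen it. First, the ``only if'' direction goes exactly as you say: from conditions~(i) and~(ii) of Definition~\ref{subalgebra} one checks that for nonempty $S\subseteq N$ the projection $1-\bigvee_{s\in S}RP_M(s)$ lies in $N$ and generates $R_N(S)$, so $N$ is a Baer*-ring, hence an AW*-algebra. Second, for the ``if'' direction your diagnosis of the crux is correct: everything reduces to the lemma $RP_M|_N=RP_N$. One clean way to get it (slightly different from what you wrote, which aimed at $1-RP_M(x)$ rather than $RP_M(x)$) is: for $a=x^*x$ set $f_n=RP_N\big((a-\tfrac1n)_+\big)\in N$; these increase, their consecutive differences are an orthogonal family in $N$, and a short computation gives $f_n\,a\,f_n\ge \tfrac1n f_n$, which forces $f_n\le RP_M(a)$. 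The orthogonal-join hypothesis then yields $\bigvee^M f_n\in N$ and equality with $\bigvee^N f_n=RP_N(a)$, whence $RP_N(a)\le RP_M(a)\le RP_N(a)$. Once $RP$ agrees, $p\vee_M q=RP_M(p+q)=RP_N(p+q)=p\vee_N q$, so finite joins agree, and your transfinite orthogonalisation (well-order the index set, take successive differences) carries arbitrary joins down to the orthogonal case exactly as you describe. No step fails.
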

 

Primary examples of AW*-algebras are von Neumann algebras, those C*-subalgebras of the algebra $B(H)$ of operators on a Hilbert space $H$ that are equal to their double commutants in $B(H)$. Here the commutant of a subset $S$ of an AW*-algebra $M$ is the set $S'$ of all $a\in M$ such that $ab=ba$ for each $b\in S$. However, even in the commutative case AW*-algebras are more general than von Neumann algebras. 
A commutative C*-algebra is an AW*-algebra if and only if its Gelfand spectrum is a Stonean space, that is, an extremally disconnected compact Hausdorff space \cite{SaitoWright,SaitoWrightBook}. In contrast, the commutative C*-algebras that are von Neumann algebras are exactly those whose Gelfand spectrum is hyperstonean \cite{KR1}.

Combining Gelfand and Stone dualities, the category of commutative AW*-algebras and \mbox{*-homomorphisms} is equivalent via the functor $\Proj$ to the category of complete Boolean algebras and Boolean algebra homomorphisms between them. We introduce a new class of maps that preserve the infinite join structure. 
%


\begin{definition}\label{def:normal morphism}
Let $M$ and $N$ be AW*-algebras and let $\varphi:M\to N$ be either a Jordan homomorphism or a *-homomorphism, then we call $\varphi$ \emph{normal} if it preserves suprema of arbitrary collections of projections. 	
\end{definition}

The functor $\Proj$ gives an equivalence between the category of commutative AW*-algebras and normal *-homomorphisms and the category of complete Boolean algebras and Boolean algebra homomorphisms that preserve arbitrary joins. By \cite[Cor. 6.10]{Guram} these categories are dually equivalent to the category of Stonean spaces and open continuous functions. 

	We require additional facts about normal *-homomorphisms and normal Jordan homomorphisms between AW*-algebras. These roughly duplicate well-known results in the von Neumann algebra setting, but require different proofs and are difficult to find in the literature. Sources are the book of Berberian on Baer*-rings \cite{Berberian} and the paper of Heunen and Reyes \cite{HeunenReyes}. Most of these results are also included with proof in the PhD thesis of the second author, see \cite[\S 2.4]{Lindenhovius}.
We begin with the following found in \cite[Exercise 23.8]{Berberian}. 

\begin{lemma}\label{lem:normal morphisms preserve right projections}
Let $\varphi:M\to N$ be a normal *-homomorphism between AW*-algebras $M$ and $N$. Then $RP(\varphi(x))=\varphi(RP(x))$ for each $x\in M$.
\end{lemma}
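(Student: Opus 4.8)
The plan is to reduce the claim to the commutative setting, where $RP$ admits a transparent description in terms of projections and Gelfand--Stone duality, and then use the fact that every element of an AW*-algebra lies in a commutative AW*-subalgebra on which $\varphi$ still acts as a normal *-homomorphism. Concretely, fix $x \in M$ and let $C$ be a maximal commutative C*-subalgebra of $M$ containing $x$; by Definition~\ref{oppp} and the Baer*-ring characterization, $C$ is a commutative AW*-subalgebra of $M$, and crucially $RP(x)$ computed in $C$ agrees with $RP(x)$ computed in $M$, since the right annihilator of $\{x\}$ inside $C$ is generated by the same projection (the annihilator in $M$ intersected with $C$, which already contains $RP(x)'$). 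So it suffices to understand how $\varphi$ treats right projections of elements of a commutative AW*-subalgebra.

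First I would establish the commutative case. If $C$ is a commutative AW*-algebra and $x \in C$, then $RP(x) = RP(x^*x)$ is the supremum in $\Proj(C)$ of the spectral projections $\chi_{(1/n,\infty)}(x^*x)$, equivalently $1_C$ minus the largest projection $p$ with $xp = 0$; under Gelfand--Stone duality this is the regular-open closure of the cozero set of $x$. Since $\varphi$ restricted to $C$ is a *-homomorphism into $N$ with commutative image, and it is \emph{normal} (it preserves arbitrary suprema of projections), it carries this supremum of spectral projections to the corresponding supremum in $\varphi[C]$, which is exactly $RP(\varphi(x))$ computed inside the commutative AW*-subalgebra generated by $\varphi(x)$ — and hence, by the same agreement-of-annihilators argument applied in $N$, equal to $RP(\varphi(x))$ in $N$. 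The one technical point to check here is that $\varphi$ maps a spectral projection $\chi_{(1/n,\infty)}(x^*x)$ of $x^*x$ to the spectral projection $\chi_{(1/n,\infty)}(\varphi(x)^*\varphi(x))$ of $\varphi(x)^*\varphi(x)$; this follows from the fact that a *-homomorphism preserves the continuous functional calculus on the commutative C*-subalgebra $C^*(x^*x)$, so $\varphi$ commutes with applying any continuous function, and $\chi_{(1/n,\infty)}$ can be obtained as a norm-limit of such functions times an appropriate idempotent, or directly because $\chi_{(1/n,\infty)}(t)$ is continuous on the spectrum once we note spectra can only shrink under $\varphi$ — the cleanest route is to write $RP(x^*x) = \sup_n f_n(x^*x)$ for suitable continuous $f_n$ with $f_n(x^*x)$ projections, using that $C$ is AW* so these suprema exist in $C$.

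I would then assemble the general case: given arbitrary $x \in M$, pick a maximal commutative C*-subalgebra $C \ni x$ as above; it is a commutative AW*-subalgebra of $M$, $\varphi|_C$ is a normal *-homomorphism onto a commutative C*-subalgebra of $N$ (its image lands in a commutative AW*-subalgebra of $N$), and normality of $\varphi|_C$ is inherited from normality of $\varphi$ since suprema of projections in $C$ agree with suprema in $M$ (this is part of Definition~\ref{subalgebra} / the Proposition characterizing AW*-subalgebras). Applying the commutative case to $\varphi|_C$ gives $\varphi(RP(x)) = RP(\varphi(x))$ with both right projections computed internally, and the annihilator-agreement observations at the start and end upgrade this to the identity in $M$ and $N$.

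The main obstacle I anticipate is the bookkeeping around which ambient algebra each $RP$ is computed in — showing that $RP$ of an element is insensitive to passing to an AW*-subalgebra containing it, and that $\varphi|_C$ is genuinely a \emph{normal} map (not merely a *-homomorphism) onto its image viewed as a commutative AW*-algebra. Once those compatibility facts are pinned down, the computation that a normal *-homomorphism between commutative AW*-algebras commutes with forming right projections is essentially the statement, via $\Proj$ and Stone duality, that an open continuous map between Stonean spaces pulls back regular-open closures of cozero sets correctly, which is routine. One could alternatively cite \cite[Exercise 23.8]{Berberian} directly, but the above gives a self-contained argument from the results already available in the paper.
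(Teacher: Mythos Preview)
The paper itself does not give a proof of this lemma; it simply cites \cite[Exercise~23.8]{Berberian}. So there is no argument in the paper to compare your approach to, and your attempt to give a self-contained proof is more than the paper does.

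That said, your reduction has a genuine gap. You write ``fix $x\in M$ and let $C$ be a maximal commutative C*-subalgebra of $M$ containing $x$''. This is only possible when $x$ is \emph{normal}; an arbitrary element of $M$ (for instance a nonzero nilpotent in $\mathrm{M}_2(\mathbb C)$) does not lie in any commutative C*-subalgebra at all, so the argument as structured does not cover the general case stated in the lemma.

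The fix is simple and you already have the ingredient in hand: in any Baer*-ring one has $RP(x)=RP(x^*x)$, and since $\varphi$ is a *-homomorphism, $\varphi(x)^*\varphi(x)=\varphi(x^*x)$, so $RP(\varphi(x))=RP(\varphi(x^*x))$. Thus it suffices to prove the identity for positive $y=x^*x$, which \emph{is} normal and does lie in a maximal commutative C*-subalgebra $C$. From there your spectral-projection argument goes through: write $RP(y)=\sup_n e_n$ with $e_n=\chi_{[1/n,\infty)}(y)$, use normality of $\varphi$ to get $\varphi(RP(y))=\sup_n\varphi(e_n)$, and verify directly from $\|y(1-e_n)\|\le 1/n$ and $ye_n\ge \tfrac{1}{n}e_n$ that $\sup_n\varphi(e_n)=RP(\varphi(y))$ in $N$. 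Note that this last step, and the agreement of $RP$ in $C$ with $RP$ in $M$, should be argued directly from the annihilator characterization rather than by appealing to Lemma~\ref{lem:image of normal morphism}, since that lemma is deduced \emph{from} the present one in the paper.
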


So normal *-homomorphisms between AW*-algebras preserve joins of projections and right projections $RP(x)$. Also, by Definition~\ref{subalgebra},  AW*-subalgebras are exactly C*-subalgebras that are closed under joins of projections and right projections. Since the image and pre-image under a *-homomorphism of a C*-subalgebra is a C*-subalgebra, we have the following. 

\begin{lemma}\label{lem:image of normal morphism}
Let $\varphi:M\to N$ be a normal *-homomorphism between AW*-algebras $M$ and $N$. Then the image and pre-image of AW*-subalgebras is an AW*-subalgebra.
\end{lemma}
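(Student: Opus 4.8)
The plan is to combine Lemma~\ref{lem:normal morphisms preserve right projections} with the alternate characterization of AW*-subalgebras given just before the statement: a C*-subalgebra is an AW*-subalgebra exactly when it is closed under suprema of arbitrary collections of projections (taken in $\Proj(M)$) and under the operation $x\mapsto RP(x)$. Since the image and pre-image of a C*-subalgebra under a *-homomorphism is again a C*-subalgebra (this is standard and quoted above), it suffices to verify these two closure conditions for the image and for the pre-image.

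First I would handle the \emph{image}. Let $N_0\subseteq M$ be an AW*-subalgebra and put $N_1=\varphi[N_0]$. Given a family $(p_i)$ of projections in $N_1$, choose projections $q_i\in N_0$ with $\varphi(q_i)=p_i$; since $N_0$ is an AW*-subalgebra, $q:=\bigvee_i q_i$ (in $\Proj(M)$) lies in $N_0$, and since $\varphi$ is normal, $\varphi(q)=\bigvee_i\varphi(q_i)=\bigvee_i p_i$ (supremum in $\Proj(N)$, which by normality agrees with the supremum computed in the larger lattice). Hence $\bigvee_i p_i\in N_1$. For the $RP$-closure: if $y\in N_1$, write $y=\varphi(x)$ with $x\in N_0$; then $RP(x)\in N_0$ because $N_0$ is an AW*-subalgebra, and Lemma~\ref{lem:normal morphisms preserve right projections} gives $RP(y)=RP(\varphi(x))=\varphi(RP(x))\in N_1$. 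So $N_1$ is an AW*-subalgebra.

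Next the \emph{pre-image}. Let $N_2\subseteq N$ be an AW*-subalgebra and put $N_3=\varphi^{-1}[N_2]$. If $(p_i)$ is a family of projections in $N_3$, set $p=\bigvee_i p_i$ in $\Proj(M)$; then $p\in M$ and $\varphi(p)=\bigvee_i\varphi(p_i)$ by normality, and each $\varphi(p_i)\in N_2$, so $\varphi(p)\in N_2$ since $N_2$ is closed under arbitrary suprema of projections; thus $p\in N_3$. If $x\in N_3$ then $\varphi(x)\in N_2$, so $RP(\varphi(x))\in N_2$, and by Lemma~\ref{lem:normal morphisms preserve right projections} this equals $\varphi(RP(x))$, whence $RP(x)\in N_3$. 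So $N_3$ is an AW*-subalgebra.

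The only genuinely delicate point is the bookkeeping about \emph{where} the suprema are computed: a priori the supremum of a set of projections inside a sub-AW*-algebra need not match the supremum in the ambient lattice. This is precisely why the statement and the cited characterization stipulate ``join taken in the projection lattice of $M$'' (resp.\ of $N$), and why we need $\varphi$ normal rather than merely a *-homomorphism; with these hypotheses in hand the identifications above are exactly what Definition~\ref{subalgebra} and Lemma~\ref{lem:normal morphisms preserve right projections} supply, so there is no real obstacle — the argument is a short diagram-chase once the right lemmas are assembled. (One small remark worth including: lifting each $p_i$ to a \emph{projection} $q_i$ in the image case is legitimate because $\varphi$ is a *-homomorphism, hence maps the set of projections of $N_0$ onto the set of projections of $\varphi[N_0]$.)
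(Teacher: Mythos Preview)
Your approach is exactly the paper's: the paragraph immediately preceding the lemma is its entire proof, and it says precisely that AW*-subalgebras are the C*-subalgebras closed under projection suprema and under $RP$, that normal *-homomorphisms preserve both operations, and that images/pre-images of C*-subalgebras under *-homomorphisms are C*-subalgebras---so the result follows. Your write-up simply unpacks this into the two cases.

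One correction to your ``small remark'': the justification you give for lifting projections is not right. A surjective *-homomorphism need \emph{not} carry projections of the domain onto projections of the image (think of the quotient $B(H)\to B(H)/K(H)$ onto the Calkin algebra). What does work here is the very lemma you already invoke: given a projection $p=\varphi(x)$ with $x\in N_0$, the element $q:=RP(x)$ is a projection lying in $N_0$ (since $N_0$ is an AW*-subalgebra), and Lemma~\ref{lem:normal morphisms preserve right projections} gives $\varphi(q)=RP(\varphi(x))=RP(p)=p$. So the lift exists, but for the reason the paper is pointing to, not merely because $\varphi$ is a *-homomorphism.
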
	

The intersection of a family of AW*-subalgebras is again such \cite[Prop. 4.8]{Berberian}. So for any subset $S$ of an AW*-algebra, there is a smallest AW*-subalgebra $AW^*(S)$ generated by it. Using Lemma~\ref{lem:image of normal morphism} we obtain the following. 

\begin{lemma}\label{lem:homomorphisms preserve generating sets}
Let $\varphi:M\to N$ be a normal *-homomorphism between AW*-algebras and $S\subseteq M$. Then $\varphi[AW^*(S)]=AW^*(\varphi[S])$.
\end{lemma}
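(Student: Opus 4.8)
The plan is to prove the two inclusions $\varphi[AW^*(S)] \subseteq AW^*(\varphi[S])$ and $AW^*(\varphi[S]) \subseteq \varphi[AW^*(S)]$ separately, using Lemma~\ref{lem:image of normal morphism} as the essential tool together with the fact that $AW^*(-)$ is the closure operator sending a subset to the smallest AW*-subalgebra containing it.

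For the inclusion $\varphi[AW^*(S)] \subseteq AW^*(\varphi[S])$: by Lemma~\ref{lem:image of normal morphism} the image $\varphi[AW^*(S)]$ is an AW*-subalgebra of $N$. It clearly contains $\varphi[S]$, since $S \subseteq AW^*(S)$. As $AW^*(\varphi[S])$ is by definition the \emph{smallest} AW*-subalgebra of $N$ containing $\varphi[S]$, we conclude $AW^*(\varphi[S]) \subseteq \varphi[AW^*(S)]$ — wait, that is the reverse containment; let me re-order. Actually this argument directly gives $AW^*(\varphi[S]) \subseteq \varphi[AW^*(S)]$, so I would present that direction first. For the remaining inclusion $\varphi[AW^*(S)] \subseteq AW^*(\varphi[S])$, I would use the pre-image half of Lemma~\ref{lem:image of normal morphism}: the set $\varphi^{-1}[AW^*(\varphi[S])]$ is an AW*-subalgebra of $M$, and it contains $S$ because $\varphi[S] \subseteq AW^*(\varphi[S])$. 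Hence by minimality of $AW^*(S)$ we get $AW^*(S) \subseteq \varphi^{-1}[AW^*(\varphi[S])]$, and applying $\varphi$ to both sides yields $\varphi[AW^*(S)] \subseteq AW^*(\varphi[S])$, as desired. Combining the two inclusions gives the equality.

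The argument is essentially a formal one about closure operators: it works for any operation ``smallest substructure containing'' provided both images and pre-images of substructures are again substructures, which is exactly the content of Lemma~\ref{lem:image of normal morphism}. There is no real obstacle here; the only thing to be careful about is that one genuinely needs \emph{both} the image and pre-image statements of Lemma~\ref{lem:image of normal morphism}, and that $AW^*(S)$ exists, which is guaranteed by the remark (cited from \cite[Prop.~4.8]{Berberian}) that intersections of AW*-subalgebras are AW*-subalgebras. One should also note that $\varphi[S] \subseteq N$ so that $AW^*(\varphi[S])$ is formed inside $N$, and that normality of $\varphi$ is used only through Lemma~\ref{lem:image of normal morphism}. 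Thus the proof is short and I would write it out in full rather than sketch it.

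\begin{proof}
Since $\varphi$ is normal, Lemma~\ref{lem:image of normal morphism} applies. The pre-image $\varphi^{-1}[AW^*(\varphi[S])]$ is an AW*-subalgebra of $M$, and it contains $S$ because $\varphi[S]\subseteq AW^*(\varphi[S])$. By minimality of $AW^*(S)$ we obtain $AW^*(S)\subseteq\varphi^{-1}[AW^*(\varphi[S])]$, and applying $\varphi$ gives $\varphi[AW^*(S)]\subseteq AW^*(\varphi[S])$. Conversely, the image $\varphi[AW^*(S)]$ is an AW*-subalgebra of $N$ by Lemma~\ref{lem:image of normal morphism}, and it contains $\varphi[S]$ since $S\subseteq AW^*(S)$. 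By minimality of $AW^*(\varphi[S])$ we obtain $AW^*(\varphi[S])\subseteq\varphi[AW^*(S)]$. Combining the two inclusions yields $\varphi[AW^*(S)]=AW^*(\varphi[S])$.
\end{proof}
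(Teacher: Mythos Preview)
Your proof is correct and matches the paper's approach: the paper simply states that the lemma follows from Lemma~\ref{lem:image of normal morphism} without writing out any details, and your two-inclusion argument using the image and pre-image halves of that lemma is exactly the standard closure-operator computation the paper is pointing at.
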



\begin{lemma}
Let $M$ and $N$ be AW*-algebras, and let $\varphi,\psi:M\to N$ be normal Jordan homomorphisms. If $\varphi$ and $\psi$ coincide on $\Proj(M)$, then $\varphi=\psi$.
\end{lemma}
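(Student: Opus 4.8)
The strategy is to reduce the claim about Jordan homomorphisms to the already-established uniqueness phenomenon for *-homomorphisms on commutative AW*-subalgebras, using the fact that a Jordan homomorphism restricted to a commutative C*-subalgebra is a *-homomorphism. First I would observe that it suffices to show $\varphi$ and $\psi$ agree on $M_\sa$, since a Jordan homomorphism is determined by its restriction to self-adjoint elements (the codomain's self-adjoint part generates it linearly). So fix a self-adjoint $x\in M$ and let $C=C^*(x)$ be the commutative C*-subalgebra it generates; in fact I would pass to $AW^*(x)$, the AW*-subalgebra generated by $x$, which is still commutative since $x$ is self-adjoint. The plan is to show $\varphi$ and $\psi$ coincide on $C^*(x)$ by exhibiting that they coincide on the projections of this commutative subalgebra and then invoking continuity and linearity.

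Next I would use the structure of commutative AW*-algebras. Since $AW^*(x)$ is a commutative AW*-algebra, it is generated (as an AW*-algebra, i.e., under norm limits together with suprema of projections) by its own projection lattice $\Proj(AW^*(x))$; this is essentially Definition~\ref{oppp} together with the fact that a commutative AW*-algebra has Stonean spectrum, so its projections are norm-dense in the self-adjoint part via the spectral theorem for elements with totally disconnected spectrum. The key point is that each projection $p\in\Proj(AW^*(x))$ lies in $\Proj(M)$, so by hypothesis $\varphi(p)=\psi(p)$. I would then argue that both $\varphi|_{AW^*(x)}$ and $\psi|_{AW^*(x)}$ are *-homomorphisms (by Lemma~\ref{lem:Jordan homomorphism}, since $AW^*(x)$ is commutative), and moreover that they are \emph{normal} *-homomorphisms — here I would use that $\varphi,\psi$ are normal in the sense of Definition~\ref{def:normal morphism}, so they preserve suprema of arbitrary families of projections, and restricting to $AW^*(x)\to N$ preserves this. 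Two normal *-homomorphisms out of a commutative AW*-algebra that agree on all projections must agree everywhere: on the dense subalgebra of linear combinations of projections they agree by linearity, and normality together with norm-continuity (every *-homomorphism between C*-algebras is contractive) extends this to all of $AW^*(x)$. Concretely, for self-adjoint $y\in AW^*(x)$ with spectral resolution $y=\int\lambda\,dE_\lambda$, write $y$ as a norm-limit of finite real-linear combinations of spectral projections $E_\lambda\in\Proj(AW^*(x))\subseteq\Proj(M)$, on which $\varphi$ and $\psi$ agree; continuity closes the gap.

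Having shown $\varphi(x)=\psi(x)$ for every self-adjoint $x\in M$, linearity over $\mathbb{R}$ of $\varphi|_{M_\sa}$ and $\psi|_{M_\sa}$ is immediate, and since every element of $M$ is $a+ib$ with $a,b$ self-adjoint and both maps are $\mathbb{C}$-linear and $*$-preserving, we conclude $\varphi=\psi$ on all of $M$. The main obstacle I anticipate is the careful handling of the ``generated by projections'' step: I must make sure that the projections of the \emph{ambient} algebra $M$ that I am allowed to use (where $\varphi=\psi$ by hypothesis) are exactly the ones that appear when approximating a general self-adjoint element of the commutative subalgebra — this is fine because $\Proj(AW^*(x))\subseteq\Proj(M)$ as sets, projections being an absolute notion, but one should be slightly careful that the spectral projections of $x$ computed in $M$ actually lie in $AW^*(x)$, which holds precisely because $AW^*(x)$ is closed under the relevant suprema and Borel functional calculus of $x$. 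A secondary subtlety is verifying that the restriction of a normal Jordan homomorphism to an AW*-subalgebra is again normal as a map into $N$; this follows since suprema of projections in $AW^*(x)$ taken in $M$ agree with those taken in $AW^*(x)$ by the characterization of AW*-subalgebras.
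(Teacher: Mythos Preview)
Your proposal is correct and follows essentially the same route as the paper: reduce to a commutative AW*-subalgebra containing a given self-adjoint element, use Lemma~\ref{lem:Jordan homomorphism} to get normal $*$-homomorphisms there, argue these are determined by their values on projections, and finish via the decomposition $x=x_1+ix_2$. The only cosmetic differences are that the paper takes the bicommutant $\{x\}''$ rather than $AW^*(x)$ as the commutative AW*-subalgebra, and invokes the equivalence between commutative AW*-algebras and complete Boolean algebras (via $\Proj$) in place of your explicit density-plus-continuity argument.
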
	
\begin{proof}
We first show that $\varphi$ and $\psi$ coincide on any commutative AW*-subalgebra $C$ of $M$. By Lemma~\ref{lem:Jordan homomorphism} the restrictions of $\varphi$ and $\psi$ to $C$ are *-homomorphisms which are clearly normal. So by Lemma \ref{lem:image of normal morphism} both $\varphi[C]$ and $\psi[C]$ are AW*-subalgebras of $N$, and these AW*-subalgebras are clearly commutative. As mentioned above Definition \ref{def:normal morphism}, the functor $\Proj$ is a an equivalence of categories of commutative AW*-algebras and complete Boolean algebras. Therefore, a *-homomorphism on $C$ is completely determined by its restriction to $\Proj(C)$, hence $\varphi$ and $\psi$ must coincide on $C$.
Let $x\in M$ be self-adjoint. Then $\{x\}''$ is commutative by \cite[Prop. 3.9]{Berberian} and an AW*-subalgebra of $M$ by \cite[Prop. 4.8(iv)]{Berberian}. It follows that $\varphi(x)=\psi(x)$. Now, let $x\in M$ be arbitrary. Then $x=x_1+ix_2$ with $x_1=(x+x^*)/2$ and $x_2=(x-x^*)/2i$, both of which are self adjoint. Hence \[\varphi(x)=\varphi(x_1)+i\varphi(x_2)=\psi(x_1)+i\psi(x_2)=\psi(x).\qedhere \] 
\end{proof}			

Before the key results, we require some further terminology that is standard in the area \cite{KR1}. Here, for a commutative C*-algebra $C$, the matrix algebra of $2\times 2$ matrices with coefficients in $C$ is $\mathrm{M}_2(C)$. A standard result is that this is a C*-algebra, and is an AW*-algebra if $C$ is such. 

\begin{definition}
An AW*-algebra $M$ is of \emph{type} $I_2$ if there is a commutative AW*-algebra $C$ such that $M$ is *-isomorphic to $\mathrm M_2(C)$. We say that $M$ has a type I$_2$ summand if it is *-isomorphic to the direct sum $N_1\oplus N_2$ of some AW*-algebras $N_1$ and $N_2$, where $N_2$ is a type I$_2$ AW*-algebra. 
\end{definition}

In \cite[Thm 4.2]{Hamhalter2} Hamhalter showed that Dye's Theorem can be extended to the class of AW*-algebras. Combining this with the previous lemma to give uniqueness yields the following.   

\begin{theorem}\label{thm:hamhalter}
Let $M$ and $N$ be AW*-algebras and assume $M$ has no type I$_2$ summands. Then any ortholattice homomorphism $\psi:\Proj(M)\to\Proj(N)$ that preserves arbitrary joins uniquely extends to a normal Jordan homomorphism $\varphi:M\to N$. 
\end{theorem}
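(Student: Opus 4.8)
The plan is to obtain existence from Hamhalter's extension of Dye's theorem and uniqueness from the lemma immediately preceding the statement, with the only real work being the careful bookkeeping needed to reconcile the hypotheses of the two ingredients. Hamhalter's theorem \cite[Thm 4.2]{Hamhalter2}, as invoked here, says that if $M$ has no type $I_2$ summand then every ortholattice isomorphism (or more generally, the relevant class of ortholattice morphisms) $\Proj(M)\to\Proj(N)$ is induced by a Jordan $*$-homomorphism $\varphi:M\to N$; so the first step is to feed our given $\psi$ into that result and extract a Jordan homomorphism $\varphi$ with $\varphi|_{\Proj(M)}=\psi$. One should check that the formulation of Dye's theorem actually used produces a Jordan homomorphism defined on all of $M$ (not just on a corner, or only up to a central projection), and that the orthocomplement-preservation of $\psi$ is what guarantees $\varphi$ is unital and $*$-preserving.

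Second, I would verify that this $\varphi$ is \emph{normal} in the sense of Definition~\ref{def:normal morphism}, i.e.\ that it preserves arbitrary suprema of projections. But this is immediate: $\varphi$ restricted to $\Proj(M)$ equals $\psi$, and $\psi$ was assumed to preserve arbitrary joins; since suprema in $\Proj(M)$ and $\Proj(N)$ are computed in those orthomodular lattices and $\varphi|_{\Proj(M)}=\psi$, normality of $\varphi$ is exactly the join-preservation hypothesis on $\psi$. So no extra argument is needed here beyond observing that $\varphi(\Proj(M))\subseteq\Proj(N)$ (a Jordan $*$-homomorphism sends idempotent self-adjoint elements to idempotent self-adjoint elements) and that $\varphi$ agrees with $\psi$ there.

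Third, uniqueness: suppose $\varphi_1,\varphi_2:M\to N$ are both normal Jordan homomorphisms extending $\psi$. Then $\varphi_1$ and $\varphi_2$ coincide on $\Proj(M)$ (both equal $\psi$ there), so by the lemma stated just before Theorem~\ref{thm:hamhalter} we get $\varphi_1=\varphi_2$. This is the cleanest part of the argument and requires nothing more than citing that lemma.

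The main obstacle is the first step, namely making sure the precise statement of Hamhalter's theorem that we cite genuinely yields a global Jordan $*$-homomorphism on $M$ from the ortholattice data, and that the ``no type $I_2$ summand'' hypothesis is used in exactly the way needed. Dye-type theorems are classically stated for ortholattice \emph{isomorphisms}, and some care is required if $\psi$ is only a homomorphism: one typically factors $\psi$ through $\Proj(\varphi[M])$ or passes to the AW*-subalgebra generated by the image, using Lemma~\ref{lem:image of normal morphism} to know $\varphi[M]$ (or its closure/generated AW*-subalgebra) is again an AW*-subalgebra of $N$ on which Dye's theorem applies, then composing with the inclusion. I would therefore devote the bulk of the written proof to this reduction, handling the image algebra correctly, and treat normality and uniqueness as the short remarks they are.
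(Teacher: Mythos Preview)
Your proposal is correct and takes essentially the same approach as the paper: the paper does not give a formal proof at all, but simply remarks that existence follows from Hamhalter's extension of Dye's theorem \cite[Thm 4.2]{Hamhalter2} and uniqueness from the immediately preceding lemma. Your write-up is in fact more careful than the paper's, since you explicitly isolate the normality check and flag the isomorphism-versus-homomorphism issue in Dye-type results; the paper silently relies on Hamhalter's version covering the homomorphism case and does not elaborate.
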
	


We now make use of these results. Let $\mathbf{AW^*_J}$ be the category of AW*-algebras and normal Jordan homomorphisms. Recall that $\mathbf{COML}$ is the category of complete orthomodular lattices and ortholattice morphisms that preserve all suprema. Let $\mathbf{AWOML}$ be its full subcategory of orthomodular lattices isomorphic to the projection lattice of an AW*-algebra. For any C*-algebra $M$ we have that $\Proj(M)$ is an orthomodular poset, and if $M$ is an AW*-algebra, then by definition $\Proj(M)$ is a complete orthomodular lattice. For a normal Jordan homomorphism $\varphi:M\to N$ between AW*-algebras, it is obvious that $\varphi$ maps projections to projections, and by normality it preserves arbitrary joins of projections. Since $\varphi$ is linear and $p'=1-p$ it is clear that $\varphi$ preserves orthocomplementation. It follows that restricting $\varphi$ to $\Proj(M)$ gives a complete ortholattice homomorphism from $\Proj(M)$ to $\Proj(N)$. This yields the following. 

\begin{theorem}
There is a functor $\Proj:\mathbf{AW^*_J}\to\mathbf{AWOML}$ that takes an AW*-algebra to its projection lattice and a normal Jordan homomorphism to its restriction to the projection lattices. 
\end{theorem}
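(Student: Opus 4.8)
The plan is to verify the three things required to define a functor: (i) that the object assignment $M \mapsto \Proj(M)$ lands in $\mathbf{AWOML}$, (ii) that the morphism assignment $\varphi \mapsto \varphi|_{\Proj(M)}$ lands in the morphism class of $\mathbf{COML}$ (i.e.\ gives a complete ortholattice homomorphism), and (iii) functoriality, i.e.\ preservation of identities and composition. The excerpt has already essentially done all of (i) and (ii) in the paragraph preceding the statement, so the proof is mainly a matter of assembling these observations and dispatching (iii).

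First I would recall that for any C*-algebra $M$, the set $\Proj(M)$ forms an orthomodular poset under $p \leq q \iff pq = p$ with $p' = 1_M - p$ (cited Proposition in the excerpt), and that when $M$ is an AW*-algebra $\Proj(M)$ is by Definition~\ref{oppp} a complete orthomodular lattice. Hence $\Proj(M)$ is an object of $\mathbf{COML}$, and it lies in the full subcategory $\mathbf{AWOML}$ by the very definition of that subcategory (orthomodular lattices isomorphic to the projection lattice of an AW*-algebra). So (i) is immediate.

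Next, for a normal Jordan homomorphism $\varphi:M \to N$, I would note: $\varphi$ preserves squares and involution, so it carries a projection $p$ (characterised by $p^2 = p = p^*$) to a projection $\varphi(p)$; it preserves order since $pq = p$ implies $\varphi(p)\varphi(q) = \varphi(p \circ q) = \varphi(p)$ using that $\varphi$ preserves the Jordan product (the Jordan product of commuting elements is the ordinary product, and $p,q$ with $pq=p$ commute); it preserves orthocomplementation because $\varphi$ is unital and linear, so $\varphi(1_M - p) = 1_N - \varphi(p)$; and it preserves arbitrary joins of projections precisely because $\varphi$ is normal (Definition~\ref{def:normal morphism}). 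Thus $\varphi|_{\Proj(M)}:\Proj(M) \to \Proj(N)$ is an ortholattice homomorphism preserving all suprema, i.e.\ a morphism of $\mathbf{COML}$, and since source and target lie in $\mathbf{AWOML}$ it is a morphism there. I do not expect any genuine obstacle here; the only point requiring a word is that join-preservation follows directly from normality rather than needing to be re-derived.

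Finally, for functoriality: the identity Jordan homomorphism $\mathrm{id}_M$ restricts to the identity on $\Proj(M)$, and if $\varphi:M\to N$ and $\psi:N\to L$ are normal Jordan homomorphisms then $\psi\circ\varphi$ is again one (composition of linear, unital, $*$-preserving, square-preserving maps is such, and composition of suprema-preserving maps is suprema-preserving), and $(\psi\circ\varphi)|_{\Proj(M)} = \psi|_{\Proj(N)} \circ \varphi|_{\Proj(M)}$ since restriction of a composite to a subset is the composite of the restrictions. Hence $\Proj$ is a functor. The whole argument is bookkeeping; if anything is the ``hard part'' it is merely checking that $\mathbf{AW^*_J}$ is genuinely closed under composition, which is standard and has effectively been used implicitly throughout the section.
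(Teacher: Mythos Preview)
Your proposal is correct and matches the paper's approach: the paper's ``proof'' is literally the paragraph preceding the theorem, which establishes (i) and (ii) in the same way you do and leaves (iii) implicit. Your write-up is simply a more explicit version, additionally spelling out functoriality and the closure of $\mathbf{AW^*_J}$ under composition, neither of which the paper bothers to mention.
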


This functor has additional properties. By definition, it is surjective on objects. If we restrict attention to AW*-algebras without type $I_2$ factor and their corresponding projection lattices we have the following. For two such AW*-algebras $M$ and $N$ we have $\Proj(M)$ is isomorphic to $\Proj(N)$ if and only if $M$ and $N$ are Jordan isomorphic, a fact that follows from Theorem~\ref{thm:hamhalter}. Further, there is a bijective correspondence between the normal Jordan homomorphisms between $M$ and $N$ without type $I_2$ factor and complete ortholattice homomorphisms between their projection lattices. This also follows from Theorem~\ref{thm:hamhalter} and the trivial fact that a normal Jordan homomorphism between $M$ and $N$ restricts to a complete ortholattice homomorphism between their projection lattices. 

\begin{definition}\label{def:proper C*-algebra}
A C*-algebra $M$ is \emph{proper} if it is not *-isomorphic to either $\mathbb{C}^2$ or to $\mathrm{M}_2(\mathbb C)$. 
\end{definition}

By \cite[Prop. 3.3]{Hamhalter} a C*-algebra $M$ is proper if and only if it does not have any maximal commutative C*-subalgebras of dimension 2, which occurs if and only if $\Proj(M)$ does not have any maximal Boolean subalgebras with 4 elements. So a C*-algebra $M$ is proper if and only if $\Proj(M)$ is proper, and this occurs if and only if the orthogeometry associated with $\Proj(M)$ is proper. 

A morphism between orthoalgebras is proper if it satisfies a somewhat awkward condition relating to the image of certain blocks not being small, i.e., if the images contain more than four elements. This could be easily translated into an equally awkward condition on normal Jordan homomorphisms, but we take the more expedient route. 

\begin{definition}
A normal Jordan homomorphism $\varphi:M\to N$ between AW*-algebras is \emph{proper} if its restriction to an ortholattice homomorphism between projection lattices is proper. 
\end{definition}

We obtain the following about the composition of the functors $\Proj$ and $\mathcal{G}$.

\begin{theorem}\label{thm:mainGProj}
The composite $\mathcal{G}\circ\Proj : \mathbf{AW^*_J}\to\mathbf{COG}$ is injective on proper AW*-algebras with no type $I_2$ factor. If $M$ and $N$ are proper AW*-algebras with no type $I_2$ factor, then there is a bijective correspondence between the proper normal Jordan homomorphisms from $M$ to $N$ and the proper normal orthogeometry morphisms from $\mathcal{G}(\Proj M)$ to $\mathcal{G}(\Proj N)$. 
\end{theorem}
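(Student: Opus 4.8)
The plan is to factor $\mathcal{G}\circ\Proj$ through $\mathbf{COML}$ and combine two correspondences already in hand: the one relating $\mathbf{AW^*_J}$ to complete orthomodular lattices with join-preserving ortholattice homomorphisms, supplied by Theorem~\ref{thm:hamhalter} and the uniqueness lemma preceding it, and the one relating the latter to $\mathbf{COG}$, supplied by the functor $\mathcal{G}:\mathbf{COML}\to\mathbf{COG}$ (essentially surjective on objects, injective on objects apart from the $1$- and $2$-element orthoalgebras, full and faithful on proper morphisms). Since $M$ and $N$ are proper, $\Proj M$ and $\Proj N$ are proper complete orthomodular lattices, hence objects of $\mathbf{AWOML}\subseteq\mathbf{COML}$, and in particular are neither the $1$- nor the $2$-element orthoalgebra; thus both functors get applied to them precisely where their good behaviour holds, and $\mathcal{G}(\Proj M),\mathcal{G}(\Proj N)$ are indeed objects of $\mathbf{COG}$.

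For the object statement, suppose $M,N$ are proper AW*-algebras with no type $I_2$ factor and that $\mathcal{G}(\Proj M)$ and $\mathcal{G}(\Proj N)$ are isomorphic orthogeometries. Because $\mathcal{G}:\mathbf{COML}\to\mathbf{COG}$ is injective on objects outside the two exceptional orthoalgebras, and $\Proj M,\Proj N$ are proper hence not exceptional, we get $\Proj M\cong\Proj N$. By the consequence of Theorem~\ref{thm:hamhalter} recorded above --- for AW*-algebras without type $I_2$ factor, isomorphism of projection lattices is equivalent to Jordan isomorphism --- it follows that $M$ and $N$ are isomorphic in $\mathbf{AW^*_J}$. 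Hence $\mathcal{G}\circ\Proj$ reflects isomorphism on the class of proper AW*-algebras without type $I_2$ factor, which is the asserted injectivity on objects (read, as it must be for objects of $\mathbf{AW^*_J}$, up to Jordan isomorphism).

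Now the morphism statement; fix such $M$ and $N$. The assignment $\varphi\mapsto\Proj(\varphi)$ is a bijection from the normal Jordan homomorphisms $M\to N$ onto the join-preserving ortholattice homomorphisms $\Proj M\to\Proj N$: existence and uniqueness of the normal Jordan extension of an arbitrary such ortholattice homomorphism is exactly Theorem~\ref{thm:hamhalter} (as recorded in the discussion following it). Moreover, by the very definition of a proper normal Jordan homomorphism, $\varphi$ is proper iff $\Proj(\varphi)$ is, so this restricts to a bijection between the proper morphisms on each side. Next, since $\mathcal{G}:\mathbf{COML}\to\mathbf{COG}$ is full and faithful with respect to proper morphisms, $\psi\mapsto\mathcal{G}(\psi)$ is a bijection from the proper join-preserving ortholattice homomorphisms $\Proj M\to\Proj N$ onto the proper morphisms $\mathcal{G}(\Proj M)\to\mathcal{G}(\Proj N)$ of $\mathbf{COG}$; and since the morphisms of $\mathbf{COG}$ are precisely the join-preserving orthogeometry morphisms, the latter are exactly the proper normal orthogeometry morphisms. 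Composing the two bijections gives a bijection between proper normal Jordan homomorphisms $M\to N$ and proper normal orthogeometry morphisms $\mathcal{G}(\Proj M)\to\mathcal{G}(\Proj N)$, and by construction this composite is the map induced by the functor $\mathcal{G}\circ\Proj$.

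The proof is thus essentially an assembly argument, and the only place where care is genuinely needed --- the natural place a proof could go wrong --- is the alignment of hypotheses along the chain: properness of $M,N$ is what places $\Proj M,\Proj N$ outside the exceptional objects of $\mathcal{G}$ and makes the Section~2 orthogeometry machinery applicable, while the absence of type $I_2$ factors is exactly what Theorem~\ref{thm:hamhalter} requires in order that ortholattice homomorphisms of projection lattices lift uniquely to normal Jordan homomorphisms. Beyond tracking these conditions, and matching the terminology (``proper normal orthogeometry morphism'' $=$ ``proper morphism of $\mathbf{COG}$'', ``isomorphic'' for AW*-algebras $=$ ``Jordan isomorphic''), no new technical obstacle arises: all of the weight is carried by Theorem~\ref{thm:hamhalter} and the full--faithfulness results imported from \cite{HHLN}.
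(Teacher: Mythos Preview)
Your proof is correct and follows essentially the same approach as the paper: the theorem is presented there without an explicit proof, but the preceding paragraphs spell out exactly the two-step factorization you give, namely using Theorem~\ref{thm:hamhalter} to identify normal Jordan homomorphisms with complete ortholattice homomorphisms of projection lattices, and then invoking the full-and-faithful, object-injective properties of $\mathcal{G}:\mathbf{COML}\to\mathbf{COG}$ from Section~2. Your write-up simply fills in the details the paper leaves implicit, including the careful tracking of the properness and no-$I_2$-factor hypotheses.
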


We wish to treat this functor more directly in terms of AW*-algebras. 

\begin{definition}
Let $M$ be an AW*-algebra. Then we denote the set of all commutative AW*-subalgebras of $M$ by $\A(M)$, which we order by inclusion.
\end{definition}

Posets of commutative subalgebras of operator algebras have been studied before, for instance in \cite{Doring} where the poset $\V(M)$ of commutative von Neumann subalgebras of a von Neumann algebra $M$ is considered. Since any von Neumann algebra is an AW*-algebra, and the AW*-subalgebras of a von Neumann algebra $M$ are the von Neumann subalgebras of $M$, we obtain $\V(M)=\A(M)$. The poset $\CC(M)$ of commutative C*-subalgebras of a C*-algebra $M$ is studied in \cite{Hamhalter,Hamhalter2,Heunen,HeunenLindenhovius,Lindenhovius2,Lindenhovius} and is in general larger than $\A(M)$ in case $M$ is an AW*-algebra.




\begin{lemma}\label{lem:commutative subset generates commutative subalgebra}
Let $M$ be an AW*-algebra and $S\subseteq M$ be closed under involution and consist of mutually commuting elements. Then $AW^*(S)$ is a commutative AW*-subalgebra of $M$. 
\end{lemma}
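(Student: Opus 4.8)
The plan is to reduce the claim to the commutative case where the functor $\Proj$ is an equivalence, and then invoke the characterisation of AW*-subalgebras from Definition~\ref{subalgebra} (or the alternate description via joins of orthogonal projections) together with Lemma~\ref{lem:commutative subset generates commutative subalgebra}'s inductive construction of $AW^*(S)$. First I would observe that since $S$ is closed under involution and consists of mutually commuting elements, every element of $S$ lies in the commutant $S'$, so $S\subseteq S'$; because $M$ is an AW*-algebra, $S'$ is an AW*-subalgebra of $M$ (this is a standard fact about commutants in Baer*-rings, cf.\ \cite[Prop.~4.8]{Berberian}). Hence $S'$ is itself an AW*-algebra, and it contains $S$, so $AW^*(S)\subseteq S'$. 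The point of working inside $S'$ is that $S''$ is a \emph{commutative} AW*-subalgebra: every element of $S''$ commutes with everything in $S'\supseteq S''$, so $S''\subseteq (S'')'$, and $S''$ is an AW*-subalgebra by the same commutant fact. Thus $AW^*(S)\subseteq S''$, and since $S''$ is commutative, so is $AW^*(S)$.

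The only thing still to verify is that $AW^*(S)$, as a C*-subalgebra of $M$ closed under right projections and joins of projections, is genuinely an AW*-subalgebra in the sense of Definition~\ref{subalgebra} — but this is exactly how $AW^*(S)$ was defined above that definition (the smallest AW*-subalgebra containing $S$ exists because intersections of AW*-subalgebras are AW*-subalgebras, per \cite[Prop.~4.8]{Berberian}), so there is nothing further to prove there. So the proof is essentially: $S\subseteq S'$ gives $AW^*(S)\subseteq S''$, the bicommutant $S''$ is a commutative AW*-subalgebra of $M$, and a sub-C*-algebra of a commutative algebra is commutative.

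I expect the main (very mild) obstacle to be purely a matter of citing the right facts about commutants: namely that for any subset $T$ of an AW*-algebra $M$ the relative commutant $T'$ is an AW*-subalgebra of $M$, and that when $T=S$ is a self-adjoint commuting set the bicommutant $S''$ is commutative. Both are contained in \cite[Prop.~4.8]{Berberian} (closure of commutants under the AW*-subalgebra operations) once one notes that $T'$ is automatically a C*-subalgebra containing $1_M$ and is closed under involution when $T=T^*$; commutativity of $S''$ follows formally from $S''\subseteq S'''= (S'')'$. No calculation is needed beyond these citations and the trivial observation that substructures of commutative algebras are commutative.
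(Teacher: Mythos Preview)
Your core argument is correct and is essentially the paper's proof: show $S\subseteq S''$ with $S''$ a commutative AW*-subalgebra of $M$ (the paper cites \cite[Prop.~3.9]{Berberian} for exactly this), whence $AW^*(S)\subseteq S''$ is commutative. Your opening sentence about reducing via the functor $\Proj$ and an ``inductive construction of $AW^*(S)$'' is extraneous and slightly confused (you even cite the lemma under proof), but since you never actually use any of that, it does not affect the argument.
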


\begin{proof}
It follows from \cite[Prop. 3.9]{Berberian} that $S\subseteq S''$, where $S''$ is a commutative AW*-subalgebra of $M$. So $S$ is contained in some commutative AW*-subalgebra of $M$, hence it must generate a commutative AW*-subalgebra.
\end{proof}



Lemmas \ref{lem:commutative subset generates commutative subalgebra}, \ref{lem:Jordan homomorphism} and \ref{lem:homomorphisms preserve generating sets} give the following. 

\begin{lemma}\label{lem:jordan homomorphisms preserve generating sets}
Let $\varphi:M\to N$ be a normal Jordan homomorphism between AW*-algebras and $S\subseteq M$ be a subset that is closed under the involution and that consists of mutually commuting elements. Then $\varphi[AW^*(S)]=AW^*(\varphi[S])$ and this is a commutative AW*-subalgebra of $N$. 
\end{lemma}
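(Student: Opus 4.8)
The plan is to chain together the three cited lemmas in the obvious way. Suppose $\varphi:M\to N$ is a normal Jordan homomorphism between AW*-algebras and $S\subseteq M$ is closed under the involution and consists of mutually commuting elements. First I would invoke Lemma~\ref{lem:commutative subset generates commutative subalgebra} to conclude that $C:=AW^*(S)$ is a commutative AW*-subalgebra of $M$. This is what lets us replace the Jordan homomorphism with a genuine *-homomorphism: restricting $\varphi$ to $C$ gives a Jordan homomorphism $\varphi|_C:C\to N$ out of a \emph{commutative} C*-algebra, so by Lemma~\ref{lem:Jordan homomorphism} the map $\varphi|_C$ is a *-homomorphism. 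It is moreover normal, since $\varphi$ preserves all suprema of projections and $C$ is closed under the joins of projections computed in $M$ (being an AW*-subalgebra).

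Next I would apply Lemma~\ref{lem:homomorphisms preserve generating sets} to the normal *-homomorphism $\varphi|_C$. Strictly that lemma is stated for a normal *-homomorphism $M\to N$ between AW*-algebras, so I would first note that $C$ is itself an AW*-algebra (it is an AW*-subalgebra of $M$) and that $AW^*(S)$ computed inside $C$ coincides with $AW^*(S)$ computed inside $M$, since $C$ is the smallest AW*-subalgebra of $M$ containing $S$ and the AW*-subalgebras of $C$ are exactly the AW*-subalgebras of $M$ contained in $C$. Then Lemma~\ref{lem:homomorphisms preserve generating sets} gives $\varphi[AW^*(S)]=(\varphi|_C)[AW^*(S)]=AW^*(\varphi|_C[S])=AW^*(\varphi[S])$. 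Finally, since $\varphi$ is a Jordan homomorphism it preserves the involution, and it sends commuting elements to commuting elements (this is immediate for a *-homomorphism on $C$, or directly: if $xy=yx$ then $\varphi$ restricted to the commutative algebra generated by $x,y$ and their adjoints is a *-homomorphism, so $\varphi(x)\varphi(y)=\varphi(y)\varphi(x)$). Hence $\varphi[S]$ is again a self-adjoint set of mutually commuting elements, and Lemma~\ref{lem:commutative subset generates commutative subalgebra} applied in $N$ shows $AW^*(\varphi[S])$ is a commutative AW*-subalgebra of $N$, which by the displayed equality equals $\varphi[AW^*(S)]$.

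The only real subtlety — the step I would be most careful about — is the compatibility of the ``$AW^*(\cdot)$'' closure operator computed in the subalgebra $C$ versus in $M$, together with the mild mismatch between the hypotheses of Lemma~\ref{lem:homomorphisms preserve generating sets} (a normal *-homomorphism between AW*-algebras) and what we have in hand (the corestriction of $\varphi|_C$, whose image lies in $N$ but whose literal codomain one might want to take as $N$ rather than $\varphi[C]$). Both points are routine: AW*-subalgebras of an AW*-subalgebra of $M$ are precisely the AW*-subalgebras of $M$ lying inside it, and $\varphi|_C$ is a bona fide normal *-homomorphism $C\to N$ between AW*-algebras, so the lemma applies verbatim. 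Everything else is bookkeeping, which is presumably why the paper relegates it to a one-line ``Lemmas X, Y, Z give the following.''
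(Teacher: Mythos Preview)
Your proposal is correct and follows precisely the approach the paper indicates: the paper's entire proof is the sentence ``Lemmas~\ref{lem:commutative subset generates commutative subalgebra}, \ref{lem:Jordan homomorphism} and \ref{lem:homomorphisms preserve generating sets} give the following,'' and you have simply written out the intended chaining of those three lemmas, including a careful handling of the only nontrivial point (that $AW^*(\cdot)$ computed in the AW*-subalgebra $C$ agrees with $AW^*(\cdot)$ computed in $M$, so that Lemma~\ref{lem:homomorphisms preserve generating sets} applies to $\varphi|_C:C\to N$).
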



\begin{proposition}\label{prop:sups in AM}
Let $M$ be an AW*-algebra, and $\D\subseteq\A(M)$. Then $\bigvee\D$ exists in $\A(M)$ if and only if $\bigcup\D$ consists of mutually commuting elements, and in this case $\bigvee\D=AW^*\left(\bigcup\D\right)$. 
\end{proposition}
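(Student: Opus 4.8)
\textbf{Proof plan for Proposition~\ref{prop:sups in AM}.}

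The plan is to prove the two implications separately, using Lemma~\ref{lem:commutative subset generates commutative subalgebra} as the main engine. First I would handle the easy direction: if $\bigvee\D$ exists in $\A(M)$, call it $C$, then every $D\in\D$ satisfies $D\subseteq C$, so $\bigcup\D\subseteq C$, and since $C$ is commutative, $\bigcup\D$ consists of mutually commuting elements. This direction requires nothing beyond the definition of the order on $\A(M)$.

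For the converse, suppose $\bigcup\D$ consists of mutually commuting elements. I would first observe that $\bigcup\D$ is closed under involution, since each $D\in\D$ is a C*-subalgebra and hence closed under $*$. By Lemma~\ref{lem:commutative subset generates commutative subalgebra} applied to $S=\bigcup\D$, the algebra $AW^*\bigl(\bigcup\D\bigr)$ is a commutative AW*-subalgebra of $M$, so it is an element of $\A(M)$. It remains to verify that it is the least upper bound of $\D$. It is an upper bound: for each $D\in\D$ we have $D\subseteq\bigcup\D\subseteq AW^*\bigl(\bigcup\D\bigr)$. For minimality, let $E\in\A(M)$ be any upper bound of $\D$. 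Then $D\subseteq E$ for every $D\in\D$, so $\bigcup\D\subseteq E$; since $E$ is an AW*-subalgebra of $M$, it contains the smallest AW*-subalgebra generated by $\bigcup\D$, namely $AW^*\bigl(\bigcup\D\bigr)\subseteq E$. Hence $AW^*\bigl(\bigcup\D\bigr)$ is the join of $\D$ in $\A(M)$, and equals it.

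I do not expect a serious obstacle here; the statement is essentially a packaging of Lemma~\ref{lem:commutative subset generates commutative subalgebra} together with the standard fact that the AW*-subalgebra generated by a set is the smallest AW*-subalgebra containing it (which relies on the intersection of AW*-subalgebras being an AW*-subalgebra, cited just after Lemma~\ref{lem:image of normal morphism}). The one point that deserves a word of care is the case $\D=\varnothing$: then $\bigcup\D=\varnothing$, the condition ``consists of mutually commuting elements'' holds vacuously, and $AW^*(\varnothing)$ is $\mathbb{C}1_M$, which is indeed the bottom element of $\A(M)$ and the empty join; so the statement holds in that degenerate case too. Otherwise the argument is as above.
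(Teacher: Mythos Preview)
Your proposal is correct and follows essentially the same route as the paper's proof: both directions rely on Lemma~\ref{lem:commutative subset generates commutative subalgebra} and the fact that $AW^*(\,\cdot\,)$ is the smallest AW*-subalgebra containing a given set, with the ``$\bigvee\D$ exists $\Rightarrow$ mutually commuting'' direction handled by the trivial observation that $\bigcup\D$ sits inside the commutative algebra $\bigvee\D$. The only difference is that you explicitly treat the case $\D=\varnothing$, which the paper omits.
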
		

\begin{proof}
Assume that $S=\bigcup\D$ consists of mutually commuting elements. Clearly $S$ is closed under the involution, so by Lemma \ref{lem:commutative subset generates commutative subalgebra} $AW^*(S)$ is a commutative. 
Let $C\in\A(M)$ be such that $D\subseteq C$ for each $D\in\D$. Then $S$ is contained in $C$, so $AW^*(S)\subseteq C$. Thus $AW^*(S)$ is the supremum of $\D$ in $\A(M)$. Conversely, assume that $\bigvee\D$ exists in $\A(M)$. Then $S\subseteq \bigvee\D$ and $\bigvee\D$ is a commutative AW*-subalgebra, so all elements in $\bigcup\D$ commute. 
\end{proof}

Suppose that $\varphi:M\to N$ is a normal Jordan homomorphism between AW*-algebras. We define $\A(\varphi):\A(M)\to\A(N)$ to be the map taking $C$ to $\varphi[C]$. We recall that a directedly complete partial order (dcpo) is a poset where every directed subset has a join and a Scott continuous map between dcpo's is a map that preserves directed joins. 

\begin{proposition} 
$\A:\mathbf{AW^*_J}\to\mathbf{DCPO}$ is a functor from the category of AW*-algebras and normal Jordan homomorphisms to the category of dcpo's and Scott continuous maps. 
\end{proposition}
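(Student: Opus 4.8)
The plan is to establish the four ingredients of functoriality into $\mathbf{DCPO}$: that each $\A(M)$ is a dcpo, that $\A(\varphi)$ sends $\A(M)$ into $\A(N)$ and is monotone, that $\A(\varphi)$ is Scott continuous, and that $\A$ preserves identities and composition. Almost all of the substance is already packaged in Proposition~\ref{prop:sups in AM} and Lemma~\ref{lem:jordan homomorphisms preserve generating sets}, so the work is mostly bookkeeping with directed families, their unions, and the AW*-subalgebras they generate. First I would note that $\A(M)$ has least element $\mathbb{C}\,1_M = AW^*(\emptyset)$ (any commutative AW*-subalgebra is a linear subspace containing $1_M$), so it remains to produce joins of directed $\D\subseteq\A(M)$. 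If $\D$ is directed, then any two elements of $\bigcup\D$ lie in a common member of $\D$, hence commute; thus $\bigcup\D$ consists of mutually commuting elements and Proposition~\ref{prop:sups in AM} gives $\bigvee\D = AW^*\!\big(\bigcup\D\big)$ in $\A(M)$.

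Next, given a normal Jordan homomorphism $\varphi:M\to N$ and $C\in\A(M)$, I would apply Lemma~\ref{lem:jordan homomorphisms preserve generating sets} with $S=C$: the set $C$ is closed under the involution, consists of mutually commuting elements, and satisfies $AW^*(C)=C$, so $\varphi[C]=AW^*(\varphi[C])$ is a commutative AW*-subalgebra of $N$; hence $\A(\varphi)(C)=\varphi[C]\in\A(N)$. Monotonicity of $\A(\varphi)$ is immediate, since direct image is monotone with respect to inclusion.

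For Scott continuity, let $\D\subseteq\A(M)$ be directed. By monotonicity, $\A(\varphi)[\D]=\{\varphi[D]:D\in\D\}$ is directed in $\A(N)$, and its union is $\varphi\big[\bigcup\D\big]=\bigcup_{D\in\D}\varphi[D]$, which is a commuting family because it is contained in the commutative AW*-subalgebra $\varphi[AW^*(\bigcup\D)]$ supplied by Lemma~\ref{lem:jordan homomorphisms preserve generating sets}. Therefore Proposition~\ref{prop:sups in AM}, used in $M$ and in $N$, together with Lemma~\ref{lem:jordan homomorphisms preserve generating sets}, gives
\[
\A(\varphi)\Big(\bigvee\D\Big)=\varphi\big[AW^*\big(\textstyle\bigcup\D\big)\big]=AW^*\big(\varphi\big[\textstyle\bigcup\D\big]\big)=AW^*\Big(\textstyle\bigcup_{D\in\D}\varphi[D]\Big)=\bigvee\A(\varphi)[\D],
\]
so $\A(\varphi)$ is Scott continuous.

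Finally, $\A(\mathrm{id}_M)(C)=\mathrm{id}_M[C]=C$ and $\A(\psi\circ\varphi)(C)=(\psi\circ\varphi)[C]=\psi[\varphi[C]]=\big(\A(\psi)\circ\A(\varphi)\big)(C)$, once one records the routine fact that a composite of normal Jordan homomorphisms is again a normal Jordan homomorphism (preservation of squares, involution, and units composes, and preservation of arbitrary joins of projections composes), so that $\mathbf{AW^*_J}$ is indeed a category. I do not expect a genuine obstacle here: the only point requiring care is the repeated translation between a directed family $\D$, the commuting set $\bigcup\D$, and $AW^*(\bigcup\D)$, and checking at each stage that the union in question really is a commuting family so that Proposition~\ref{prop:sups in AM} applies on both the $M$ side and the $N$ side.
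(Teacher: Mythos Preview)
Your proposal is correct and follows essentially the same route as the paper: both arguments hinge on observing that a directed family $\D$ has commuting union, so Proposition~\ref{prop:sups in AM} gives $\bigvee\D=AW^*(\bigcup\D)$, and then Lemma~\ref{lem:jordan homomorphisms preserve generating sets} yields $\varphi[AW^*(\bigcup\D)]=AW^*(\varphi[\bigcup\D])=\bigvee\A(\varphi)[\D]$. You simply make explicit the well-definedness of $\A(\varphi)$ and the identity/composition axioms that the paper leaves implicit.
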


\begin{proof}
Assume $\D\subseteq\A(M)$ is directed and set $S=\bigcup\D$. Then any $x,y\in S$ belong to some member of $\D$, so $S$ is commutative. So by Proposition \ref{prop:sups in AM} $\bigvee\D$ exists. Thus $\A(M)$ is a dcpo. Let $\varphi:M\to N$ be a normal Jordan homomorphism. Then by Proposition~\ref{prop:sups in AM} we have $\varphi(\bigvee\D) = \varphi[AW^*(S)]$ and $\bigvee\varphi[\D]=AW^*(\varphi[S])$. Lemma~\ref{lem:jordan homomorphisms preserve generating sets} gives that these are equal. 
\end{proof}	

\begin{remark}\label{nkl}
We can show more than stated in the result above. If $\D\subseteq\A(M)$ has a join, then $S=\bigcup\D$ is commutative, and the proof of the previous result shows $\varphi(\bigvee\D)=\bigvee\varphi[\D]$. So $\varphi$ not only preserves directed joins, it preserves all existing joins. 
\end{remark}

Let $A$ be an orthomodular lattice and $\mathcal{B}(A)$ be its poset of Boolean subalgebras. Directed joins in $\mathcal{B}(A)$ are given by unions and for an ortholattice homomorphism $f:A\to A'$ there is a Scott continuous map $\mathcal{B}(f):\mathcal{B}(A)\to\mathcal{B}(A')$ given by $\mathcal{B}(f)(S)=f[S]$. This gives the following.

\begin{proposition}
$\mathcal{B}:\mathbf{OML}\to\mathbf{DCPO}$ is a functor from the category of orthomodular lattices and ortholattice homomorphisms to the category of dcpo's and Scott continuous maps. 
\end{proposition}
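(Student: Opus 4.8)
The plan is to verify in turn that $\mathcal{B}(A)$ is a dcpo, that $\mathcal{B}(f)$ is a well-defined Scott continuous map for each ortholattice homomorphism $f$, and that $\mathcal{B}$ preserves identities and composition. Everything needed is essentially indicated in the paragraph preceding the statement; the task is to supply the routine verifications.

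First I would show that a directed union of Boolean subalgebras of an orthomodular lattice $A$ is again a Boolean subalgebra. Let $\D\subseteq\mathcal{B}(A)$ be directed and set $S=\bigcup\D$. Every member of $\D$ contains $0$ and $1$, so $S$ does; and $S$ is closed under $'$ because each member is. For closure under $\vee$ and $\wedge$, given $a,b\in S$ pick $B_1,B_2\in\D$ with $a\in B_1$, $b\in B_2$, and by directedness a $B_3\in\D$ with $B_1\cup B_2\subseteq B_3$; then $a\vee b,a\wedge b\in B_3\subseteq S$. So $S$ is a sub-ortholattice of $A$. It is moreover distributive: given $a,b,c\in S$, directedness provides a single $B\in\D$ containing all three, and the distributive law for $a,b,c$ holds in the Boolean algebra $B$, hence in $S$. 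Thus $S\in\mathcal{B}(A)$, and it is plainly the least upper bound of $\D$. Hence $\mathcal{B}(A)$ is a dcpo whose directed joins are unions.

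Next I would check that $\mathcal{B}(f)$ is well defined for an ortholattice homomorphism $f:A\to A'$: for a Boolean subalgebra $B$ of $A$, the set $f[B]$ is the homomorphic image of the Boolean algebra $B$ under $f|_B$, hence a Boolean algebra, and it is closed in $A'$ under the ortholattice operations since $f$ preserves them, so $f[B]\in\mathcal{B}(A')$. Monotonicity of $\mathcal{B}(f)$ is immediate. For Scott continuity, let $\D\subseteq\mathcal{B}(A)$ be directed; then $\mathcal{B}(f)\bigl(\bigvee\D\bigr)=f\bigl[\bigcup\D\bigr]=\bigcup_{B\in\D}f[B]=\bigvee\,\mathcal{B}(f)[\D]$, where the middle equality is the elementary fact that images commute with unions and the outer equalities use that directed joins in both $\mathcal{B}(A)$ and $\mathcal{B}(A')$ are unions. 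Finally $\mathcal{B}(\mathrm{id}_A)(B)=\mathrm{id}_A[B]=B$ and $\mathcal{B}(g\circ f)(B)=(g\circ f)[B]=g[f[B]]=\mathcal{B}(g)\bigl(\mathcal{B}(f)(B)\bigr)$, so $\mathcal{B}$ is a functor.

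The only step carrying any content is the first. The crux there is simply that every finite subset of a directed union lies inside one of its members, so any identity witnessed by finitely many elements --- distributivity in particular --- transfers from that member to the union; the remaining verifications are bookkeeping. (As with Remark~\ref{nkl}, the same argument in fact shows $\mathcal{B}(f)$ preserves all existing joins, not just directed ones, but we only need Scott continuity here.)
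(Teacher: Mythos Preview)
Your proof is correct and follows exactly the approach the paper sketches in the paragraph preceding the proposition; the paper itself gives no further proof, so your write-up simply supplies the routine verifications that the paper leaves implicit.

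One small caveat: the closing parenthetical claiming that ``the same argument'' shows $\mathcal{B}(f)$ preserves \emph{all} existing joins is not quite right. Your Scott continuity argument hinges on directed joins being unions, but non-directed joins in $\mathcal{B}(A)$ are not unions (e.g.\ the join of two $4$-element Boolean subalgebras is an $8$-element Boolean subalgebra, not their $6$-element union). The conclusion is still true --- one shows $f[\langle S\rangle]=\langle f[S]\rangle$ for the sub-ortholattice generated by $S$, so $f$ takes the Boolean subalgebra generated by $\bigcup\D$ to the one generated by $\bigcup f[\D]$ --- but that is a different argument. Since you only need Scott continuity here, you could simply drop the parenthetical.
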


For a complete orthomodular lattice $A$, we let $\mathcal{B}_C(A)$ be its poset of complete Boolean subalgebras. These are Boolean subalgebras of $A$ that are closed under arbitrary joins in $A$. Clearly $\mathcal{B}_C(A)$ is a subposet of $\mathcal{B}(A)$ and for each $S\in\mathcal{B}(A)$ there is a least member $\overline{S}$ of $\mathcal{B}_C(A)$ above it, the closure of $S$ under arbitrary joins and meets in $A$, due to the fact that the maximal elements of $\B(A)$ are complete Boolean subalgebras as follows from the remarks below \cite[Prop. 3.4 \& Lem. 4.1]{Kalmbach}. If $S$ is finite, $\overline{S}=S$, and it follows that the elements of $\mathcal{B}(A)$ of finite height are exactly the elements of $\mathcal{B}_C(A)$ of finite height. In particular, the atoms of $\B_C(A)$ are the Boolean subalgebras $B_p=\{0,p,p',1\}$ for some non-trivial $p\in A$.
The join of an updirected set $\D$ in $\mathcal{B}_C(A)$ is given by $\overline{\bigcup\D}$. In particular $\mathcal{B}_C(A)$ is a dcpo. Clearly any $S\in\B_C(A)$ satisfies $S=\bigvee\{B_p:p\in S\}$, hence $\B_C(A)$ is atomistic. If $f:A\to M$ is an ortholattice homomorphism between complete ortholattices that preserves arbitrary joins, then $f[\overline{S}]=\overline{f[S]}$ for any Boolean subalgebra $S$ of $A$, so $\mathcal{B}_C(f):\mathcal{B}_C(A)\to\mathcal{B}_C(M)$ given by $f(S)=f[S]$ preserves directed joins. This gives the following. 

\begin{proposition}
$\mathcal{B}_C:\mathbf{COML}\to\mathbf{DCPO}$ is a functor.
\end{proposition}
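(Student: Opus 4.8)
The plan is simply to assemble the observations already recorded in the paragraph preceding the statement. For each object $A$ of $\mathbf{COML}$, the poset $\mathcal{B}_C(A)$ is a dcpo: every updirected $\mathcal{D}\subseteq\mathcal{B}_C(A)$ has a join, namely $\overline{\bigcup\mathcal{D}}$, the closure of $\bigcup\mathcal{D}$ under arbitrary joins and meets in $A$, which again lies in $\mathcal{B}_C(A)$ since the maximal elements of $\mathcal{B}(A)$ are complete Boolean subalgebras. So the only remaining work is to check that $\mathcal{B}_C$ is well defined on morphisms, is Scott continuous, and satisfies the two functor axioms.

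First I would verify that for a morphism $f:A\to M$ of $\mathbf{COML}$ — an ortholattice homomorphism preserving arbitrary joins — the assignment $\mathcal{B}_C(f)(S)=f[S]$ lands in $\mathcal{B}_C(M)$. Since $f$ is an ortholattice homomorphism, $f[S]$ is a sub-ortholattice of $M$ which inherits distributivity from the Boolean algebra $S$, hence is a Boolean subalgebra; and since $S\in\mathcal{B}_C(A)$ means $S=\overline{S}$, the identity $f[\overline{S}]=\overline{f[S]}$ recorded above gives $f[S]=f[\overline{S}]=\overline{f[S]}$, so $f[S]$ is closed under arbitrary joins and meets in $M$, i.e.\ $f[S]\in\mathcal{B}_C(M)$. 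Monotonicity is immediate from $S\subseteq T\Rightarrow f[S]\subseteq f[T]$. For Scott continuity, given an updirected $\mathcal{D}\subseteq\mathcal{B}_C(A)$ the set $f[\mathcal{D}]$ is again updirected, so using the same closure identity, $\mathcal{B}_C(f)(\bigvee\mathcal{D})=f[\overline{\bigcup\mathcal{D}}]=\overline{f[\bigcup\mathcal{D}]}=\overline{\bigcup f[\mathcal{D}]}=\bigvee f[\mathcal{D}]$. Functoriality is then routine bookkeeping: $\mathcal{B}_C(\mathrm{id}_A)(S)=S$ and $\mathcal{B}_C(g\circ f)(S)=g[f[S]]=\mathcal{B}_C(g)(\mathcal{B}_C(f)(S))$.

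There is essentially no obstacle here: every nontrivial ingredient — that $\mathcal{B}_C(A)$ is a dcpo and that $f$ respects the closure operation $S\mapsto\overline{S}$ — has already been established in the preceding discussion, so the proof only needs to observe that these combine to yield well-definedness, Scott continuity, and the functor axioms. If one wanted a fully self-contained argument, the single point worth spelling out would be $f[\overline{S}]=\overline{f[S]}$, which follows because $\overline{S}$ is generated from $S$ by closing under arbitrary joins (and, via orthocomplementation, meets) while $f$ preserves arbitrary joins and orthocomplements, so the direct image of the closure is exactly the closure of the direct image.
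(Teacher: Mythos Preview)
Your proposal is correct and follows exactly the paper's approach: the paper itself gives no separate proof for this proposition, instead recording all of the ingredients (that $\mathcal{B}_C(A)$ is a dcpo with directed joins $\overline{\bigcup\mathcal{D}}$, and that $f[\overline{S}]=\overline{f[S]}$ so $\mathcal{B}_C(f)$ preserves directed joins) in the paragraph immediately preceding the statement and then simply writing ``This gives the following.'' You have assembled precisely those ingredients, adding only the routine verification of the functor axioms.
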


Let $M$ be an AW*-algebra with projection lattice $A$. Projections $p,q$ are orthogonal in $A$ iff $p\leq q'$, and this is easily seen to be equivalent to $pq=0=qp$. We have seen that if projections $p,q$ commute, then they belong to a Boolean subalgebra of $A$. Conversely, if $p,q$ belong to a Boolean subalgebra $B$ of $A$, then there are pairwise orthogonal $e,f,g$ in $B$ with $p=e+f$ and $q=f+g$. It follows that $pq=f=qp$. So projections commute iff they belong to a Boolean subalgebra of the projection lattice. So by Lemma~\ref{lem:commutative subset generates commutative subalgebra} there is a map $AW^*(\,\cdot\,)$ from the Boolean subalgebras of $A$ to the commutative AW*-subalgebras of $M$. 

\begin{proposition}
Let $M$ be an AW*-algebra with $A$ its projection lattice. Then the maps $\Proj$ and $AW^*(\,\cdot\,)$ are mutually inverse order-isomorphisms between the dcpo's $\A(M)$ and $\mathcal{B}_C(A)$. 
\end{proposition}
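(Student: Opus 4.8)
The plan is to verify that $\Proj$ and $AW^*(\,\cdot\,)$ are well-defined maps between these posets, that each is inclusion preserving, and that they are mutually inverse; a monotone bijection with monotone inverse is automatically an order isomorphism, and an order isomorphism carries directed joins to directed joins, so no separate argument about dcpo structure is needed. For well-definedness of $\Proj$, let $C\in\A(M)$. Commutativity of $C$ gives that $p\wedge q=pq$ and $p\vee q=p+q-pq$ lie in $C$ for projections $p,q\in C$, and by the formula for the join of commuting projections these agree with the meet and join taken in $A$, so $\Proj(C)$ is a Boolean subalgebra of $A$. As $C$ is an AW*-algebra, $\Proj(C)$ is a complete Boolean algebra, so a family $\{p_i\}\subseteq\Proj(C)$ can be disjointified inside the complete Boolean algebra $\Proj(C)$ into a pairwise orthogonal family $\{q_i\}$ having the same supremum $p$ there; the description of AW*-subalgebras recalled above gives $\bigvee^A q_i\in C$, and comparing upper bounds in $A$ with those in the subposet $\Proj(C)$ forces $\bigvee^A q_i=p$, and hence also $\bigvee^A p_i=p\in\Proj(C)$. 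Thus $\Proj(C)\in\mathcal{B}_C(A)$. Conversely, for $B\in\mathcal{B}_C(A)$ the elements of $B$ are mutually commuting self-adjoint elements of $M$, so Lemma~\ref{lem:commutative subset generates commutative subalgebra} gives $AW^*(B)\in\A(M)$.

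The heart of the proof is to show that for $B\in\mathcal{B}_C(A)$ one has $AW^*(B)=C^*(B)$ and $\Proj(C^*(B))=B$. Since $pq=p\wedge q\in B$ and $p^*=p$ for $p,q\in B$, the linear span of $B$ is a unital $*$-subalgebra of $M$, so $C^*(B)$ is simply its norm closure. The Gelfand spectrum of $C^*(B)$ is the Stone space of $B$: a character is determined by its values on $B$, and its restriction to $B$ is exactly a Boolean homomorphism into the two-element algebra, every such homomorphism arising this way; under this identification $\Proj(C^*(B))$ is the clopen algebra of the Stone space, which is $B$ itself by Stone duality. Because $B$ is a \emph{complete} Boolean algebra its Stone space is Stonean, so by the characterization of commutative AW*-algebras via Stonean spectra stated above, $C^*(B)$ is a commutative AW*-algebra. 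Moreover any orthogonal family of projections of $C^*(B)$ is an orthogonal family in $B$, and its join in $A$ lies in $B\subseteq C^*(B)$ since $B\in\mathcal{B}_C(A)$; hence $C^*(B)$ meets the criterion for being an AW*-subalgebra of $M$, and as it is contained in every AW*-subalgebra of $M$ that contains $B$, it equals $AW^*(B)$. This proves $\Proj\circ AW^*=\mathrm{id}_{\mathcal{B}_C(A)}$.

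For the other composite, let $C\in\A(M)$. Then $C$ is a commutative AW*-algebra, hence --- being its own maximal commutative C*-subalgebra --- is generated as a C*-algebra by $\Proj(C)$ by Definition~\ref{oppp}; therefore $C=C^*(\Proj(C))\subseteq AW^*(\Proj(C))\subseteq C$, the last inclusion holding because $C$ is itself an AW*-subalgebra of $M$ containing $\Proj(C)$. Hence $AW^*\circ\Proj=\mathrm{id}_{\A(M)}$, and since both maps are plainly inclusion preserving they are mutually inverse order isomorphisms. I expect the main obstacle to be the middle step, and within it the two appeals to completeness of $B$: first to get that $C^*(B)$ is itself an AW*-algebra, and second to get that suprema of orthogonal projections of $C^*(B)$ taken in $A$ remain inside $B$; the parallel point for $\Proj$ is the disjointification argument of the first paragraph, where one needs that the $A$-supremum of an orthogonal family of projections of $C$ actually belongs to $C$. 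Everything else is routine bookkeeping.
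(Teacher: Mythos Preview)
Your proof is correct, and for the harder direction $\Proj\circ AW^*=\mathrm{id}$ it takes a genuinely different route from the paper. The paper sets $D=AW^*(B)$, takes the Stone space $Y$ of $B$, and invokes Hamhalter's extension of Dye's theorem (Theorem~\ref{thm:hamhalter}) to lift the complete Boolean embedding $\Proj(C(Y))\cong B\hookrightarrow\Proj(D)$ to a normal $*$-homomorphism $\varphi:C(Y)\to D$; it then argues that $\varphi$ is surjective (its image is an AW*-subalgebra containing $B$) and injective (via Lemma~\ref{lem:normal morphisms preserve right projections} on right projections), so that $\Proj(D)=B$. You instead work from below: you identify the Gelfand spectrum of $C^*(B)$ with the Stone space of $B$, read off $\Proj(C^*(B))=B$, and then use completeness of $B$ twice (Stonean spectrum, closure under $A$-joins) together with the AW*-subalgebra criterion to conclude $C^*(B)=AW^*(B)$. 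Your argument is more elementary in that it avoids Theorem~\ref{thm:hamhalter} entirely; the paper's argument is economical because that theorem is already on hand for the main results. You also make explicit the well-definedness of $\Proj:\A(M)\to\mathcal{B}_C(A)$ via the disjointification argument, which the paper leaves implicit. The one place you might tighten is the assertion that every Boolean homomorphism $B\to\{0,1\}$ extends to a character of $C^*(B)$: this is standard (write elements of $\mathrm{span}(B)$ over a finite orthogonal partition to see the extension is well defined and contractive), but a sentence to that effect would not be out of place.
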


\begin{proof}
Clearly $\Proj$ and $AW^*(\,\cdot\,)$ are order preserving. Let $C$ be a commutative AW*-subalgebra of $M$. Then $C$ is an AW*-algebra, so by Definition~\ref{oppp} $C$ is generated as a C*-algebra by its projections, hence it is generated as an AW*-algebra by its projections. Thus $AW^*(\,\cdot\,)\circ\Proj$ is the identity. 

Let $B$ be a complete Boolean subalgebra of~$A$, and set $D=AW^*(B)$ and $C=\Proj(D)$. Then $B$ is a Boolean subalgebra of $C$ and since arbitrary joins in both $B$ and $C$ agree with those in $A$, we have that $B$ is a complete Boolean subalgebra of $C$. The Stone space $Y$ of $B$ is Stonean, so $C(Y)$ is an AW*-algebra. Let $i:\Proj(C(Y))\to B$ be the obvious isomorphism. Since $C(Y)$ is commutative, it does not have type $I_2$ summands, and since $B$ is a complete subalgebra of $C$, we have that $i:\Proj(C(Y))\to\Proj(D)$ preserves arbitrary joins. So by Theorem~\ref{thm:hamhalter}, $i$ extends to a normal Jordan homomorphism $\varphi:C(Y)\to D$, and as these are commutative algebras, $\varphi$ is a normal *-homomorphism. The image of $\varphi$ is an AW*-subalgebra that contains $B$, so $\varphi$ is onto. By Lemma~\ref{lem:normal morphisms preserve right projections}, $\varphi$ preserves right projections. So if $\varphi(x)=0$, then $\varphi(RP(x))=RP(\varphi(x))=RP(0)=0$. But $RP(x)$ is a projection, and since $i$ is an isomorphism, $RP(x)=0$, and this implies that $x=0$ \cite[Prop.~3.6]{Berberian}. Thus $\varphi$ is a *-isomorphism, and it follows that the projections of $AW^*(B)$ are exactly $B$. Thus $\Proj\circ AW^*(\,\cdot\,)$ is the identity. 
\end{proof}

\begin{corollary}
For an AW*-algebra $M$, the elements of $\A(M)$ of height $n$ are exactly the $(n+1)$-dimensional commutative C*-subalgebras of $M$. 
\end{corollary}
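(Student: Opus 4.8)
The plan is to move the statement across the order-isomorphism $\Proj:\A(M)\to\mathcal B_C(A)$ of the preceding proposition, where $A=\Proj(M)$; since this is an isomorphism of posets it preserves the height function, so it suffices to (a) identify the finite-dimensional commutative C*-subalgebras of $M$ with the elements of $\A(M)$ whose image under $\Proj$ is finite, matching $\dim C$ with the number of atoms of $\Proj(C)$, and (b) compute the height in $\mathcal B_C(A)$ of a finite Boolean subalgebra in terms of its number of atoms.

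For (a), I would first suppose that $C$ is a finite-dimensional commutative C*-subalgebra of $M$, so $C\cong\mathbb C^{d}$ with $d=\dim C$ and $\Proj(C)$ is the Boolean algebra with $2^{d}$ elements, whose $d$ atoms $p_1,\dots,p_d$ are mutually orthogonal projections of $M$ summing to $1_M$. Then I would check $C\in\A(M)$ using the characterization of AW*-subalgebras recalled above: $C$ is itself a commutative AW*-algebra, and any orthogonal family of projections in $C$ is finite, so its join in $\Proj(M)$ is its finite sum, which lies in the linear subspace $C$. Conversely, if $C\in\A(M)$ and $\Proj(C)$ is finite, then since $C$ is an AW*-algebra it is generated as a C*-algebra by $\Proj(C)$, hence by the finitely many atoms of $\Proj(C)$, which are mutually orthogonal projections summing to $1_M$; the C*-algebra they generate is $\cong\mathbb C^{d}$ with $d$ the number of atoms, so $C$ is finite-dimensional with $\dim C=d$ and $|\Proj(C)|=2^{d}$.

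For (b), I would use that $\mathcal B_C(A)$ is a subposet of $\mathcal B(A)$ with the same elements of finite height (as recalled in the text, finite Boolean subalgebras are closed, $\overline S=S$), and that every chain below a finite Boolean subalgebra consists of finite subalgebras, so the height of a finite $B$ is the same in $\mathcal B(A)$ as in $\mathcal B_C(A)$. A routine induction then shows that $B$ with $2^{d}$ elements has height $d-1$: the bottom $\{0,1\}$ has one atom and height $0$, and a cover relation between finite Boolean subalgebras splits exactly one atom into two, hence adds exactly one atom and doubles the number of elements. Combining (a) and (b), an element $C$ of $\A(M)$ has height $n$ iff $\Proj(C)$ is finite with $2^{n+1}$ elements iff $C$ is an $(n+1)$-dimensional commutative C*-subalgebra of $M$, as claimed. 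I expect the only real subtlety, and thus the main obstacle, to be verifying in (a) that a finite-dimensional commutative C*-subalgebra is automatically an AW*-subalgebra, so that it genuinely belongs to $\A(M)$; this is handled by the AW*-subalgebra criterion quoted in the excerpt.
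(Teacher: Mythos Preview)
Your proposal is correct and follows essentially the same route as the paper: transport the problem across the order-isomorphism $\Proj:\A(M)\to\mathcal B_C(A)$ of the preceding proposition and identify the $(n{+}1)$-dimensional commutative C*-subalgebras with the finite Boolean subalgebras of $2^{n+1}$ elements. The paper's own proof is much terser---it only records that the $k$-dimensional commutative C*-algebras are $C(X)$ for a $k$-point $X$ and that these are AW*-algebras---so your explicit verification that such a subalgebra is in fact an AW*-\emph{subalgebra} (via the orthogonal-joins criterion) and your height computation in $\mathcal B_C(A)$ simply spell out steps the paper leaves implicit.
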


\begin{proof}
The $n$-dimensional commutative C*-algebras are those isomorphism to $C(X)$ for some set $X$ with $n$ elements. Each of these is an AW*-algebra. 
\end{proof}

\begin{corollary}\label{lem:A(M) atomistic}
Let $M$ be an AW*-algebra. Then $\A(M)$ is atomistic; its atoms are of the form $A_p=\mathrm{span}(p,1_M-p)$ for non-trivial projection $p\in M$. 
\end{corollary}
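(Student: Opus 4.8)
The plan is to transport atomisticity along the order-isomorphism established in the proposition immediately preceding this corollary. Write $A=\Proj(M)$. In the discussion leading up to that proposition it was already observed that $\mathcal{B}_C(A)$ is atomistic, with atoms exactly the four-element subalgebras $B_p=\{0,p,p',1\}$ for non-trivial $p\in A$. Since $AW^*(\,\cdot\,):\mathcal{B}_C(A)\to\A(M)$ is an order-isomorphism (with inverse $\Proj$), and an order-isomorphism carries an atomistic poset to an atomistic poset and atoms to atoms, it follows at once that $\A(M)$ is atomistic and that its atoms are precisely the AW*-subalgebras $AW^*(B_p)$ as $p$ ranges over the non-trivial projections of $M$.

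It then remains to identify $AW^*(B_p)$ with $A_p=\mathrm{span}(p,1_M-p)$. For a non-trivial projection $p$, the set $A_p$ is a linear subspace of $M$ containing $1_M=p+(1_M-p)$; it is closed under multiplication because $p$ and $1_M-p$ are orthogonal projections, closed under the involution because $p$ and $1_M-p$ are self-adjoint, and finite-dimensional hence norm-closed. So $A_p$ is a two-dimensional commutative C*-subalgebra of $M$, and by the corollary just proved (finite-height elements of $\A(M)$ are exactly the finite-dimensional commutative C*-subalgebras) it is an element of $\A(M)$ of height $1$. A direct computation shows that the only projections in $A_p$ are $0,p,1_M-p,1_M$, so $\Proj(A_p)=B_p$. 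Applying the inverse $AW^*(\,\cdot\,)=\Proj^{-1}$ of the preceding proposition yields $AW^*(B_p)=AW^*(\Proj(A_p))=A_p$, which completes the identification of the atoms.

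Given the machinery already in place, there is essentially no obstacle here; the one point requiring a little care is verifying that $A_p$ genuinely lies in $\A(M)$, i.e. that it is an AW*-subalgebra rather than merely a C*-subalgebra, and this is supplied by the corollary characterizing the finite-height elements of $\A(M)$.
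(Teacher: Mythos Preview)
Your argument is correct and is precisely what the paper intends: the corollary is stated without proof, as an immediate consequence of the order-isomorphism $\Proj:\A(M)\to\mathcal{B}_C(\Proj M)$ together with the already-noted atomisticity of $\mathcal{B}_C(\Proj M)$ with atoms $B_p$. Your identification $AW^*(B_p)=A_p$ via the preceding height corollary and the computation $\Proj(A_p)=B_p$ simply spells out the one detail the paper leaves implicit.
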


\begin{definition}
For an AW*-algebra $M$ and orthomodular lattice $A$, let $\A_*(M)$ be the elements of height at most two in $\A(M)$ and let $\mc{B}_*(A)$ be the elements of height at most two in $\mathcal{B}(A)$. 
\end{definition}

For an orthomodular lattice $A$, its associated orthogeometry $\mathcal{G}(A)$ is constructed as the pair $(P,L)$ where $P$ is the set of atoms of $\mathcal{B}(A)$ and $L$ is the set of elements of height two in $\mathcal{B}(A)$. So one may naturally consider $\mathcal{B}_*(A)$ as giving the orthogeometry $\mathcal{G}(A)$, and similarly $\A_*(M)$ as giving the orthogeometry for the projection lattice of $M$. 

\begin{corollary}\label{prop:A orthogeometry}
Let $M$ be an AW*-algebra. Then $\A_*(M)$ is an orthogeometry isomorphic to $\mathcal G(\Proj(M))$, which is proper if and only if $M$ is proper.
\end{corollary}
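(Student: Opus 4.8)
The plan is to assemble this corollary from the order-isomorphism between $\A(M)$ and $\mathcal{B}_C(\Proj M)$ established just above, together with the facts proved earlier about the finite-height structure of these posets and about orthogeometries. First I would invoke the preceding proposition to get that $\Proj$ and $AW^*(\,\cdot\,)$ are mutually inverse order-isomorphisms $\A(M)\cong\mathcal{B}_C(\Proj M)$. Since an order-isomorphism preserves height, it restricts to an order-isomorphism between the elements of height at most two on each side; on the left these are by definition $\A_*(M)$, and I would note (using the remarks in the excerpt that elements of finite height in $\mathcal{B}(A)$ and $\mathcal{B}_C(A)$ coincide) that on the right they are exactly $\mathcal{B}_*(\Proj M)$.

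Next I would recall that for the orthomodular lattice $A=\Proj(M)$, the associated orthogeometry $\mathcal{G}(A)$ is by construction the pair $(P,L)$ with $P$ the atoms of $\mathcal{B}(A)$ and $L$ the height-two elements of $\mathcal{B}(A)$, and that this data is precisely what $\mathcal{B}_*(A)$ records (atoms are the height-one elements, lines the height-two elements, and the incidence is inclusion). So the order-isomorphism $\A_*(M)\cong\mathcal{B}_*(\Proj M)$ is an isomorphism of the corresponding point–line incidence structures, i.e.\ $\A_*(M)\cong\mathcal{G}(\Proj M)$ as pre-orthogeometries. Now $\Proj(M)$ is a complete orthomodular lattice, hence in particular an orthomodular poset, so by the very definition of an orthogeometry (a pre-orthogeometry isomorphic to $(\mathcal{H}(A))_*$ for some orthomodular poset $A$, equivalently the 4- and 8-element Boolean subalgebras of such an $A$), $\mathcal{G}(\Proj M)$ is an orthogeometry; therefore so is the isomorphic structure $\A_*(M)$.

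Finally, for the properness clause, I would use the characterization recorded in the excerpt just after Definition~\ref{def:proper C*-algebra}: a C*-algebra $M$ is proper iff $\Proj(M)$ has no maximal Boolean subalgebra with $4$ elements, iff $\Proj(M)$ is proper as an orthomodular poset, iff the orthogeometry associated with $\Proj(M)$ is proper. Since $\A_*(M)$ has just been shown isomorphic to $\mathcal{G}(\Proj M)$, and properness of a pre-orthogeometry (every point lies on a line) is an isomorphism invariant, $\A_*(M)$ is proper exactly when $\mathcal{G}(\Proj M)$ is, which is exactly when $M$ is proper. I do not expect a genuine obstacle here: the content has all been done in the preceding propositions and corollaries, and the only mild care needed is to check that the order-isomorphism of posets really does translate into an isomorphism of the point–line structures — that is, that ``atom'' and ``height-two element'' are order-theoretic notions preserved by the isomorphism, which is immediate.

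\begin{proof}
By the previous proposition, $\Proj$ and $AW^*(\,\cdot\,)$ are mutually inverse order-isomorphisms between $\A(M)$ and $\mathcal{B}_C(\Proj M)$. An order-isomorphism preserves height, so it restricts to an order-isomorphism between the sub-posets of elements of height at most two; by definition this is an order-isomorphism $\A_*(M)\cong\mathcal{B}_*(\Proj M)$, using that the finite-height elements of $\mathcal{B}(\Proj M)$ and of $\mathcal{B}_C(\Proj M)$ coincide. Under this isomorphism atoms correspond to atoms and height-two elements to height-two elements, and incidence is inclusion on both sides, so $\A_*(M)$ is isomorphic as a pre-orthogeometry to the pair $(P,L)$ with $P$ the atoms of $\mathcal{B}(\Proj M)$ and $L$ its height-two elements, which is $\mathcal{G}(\Proj M)$. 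Since $\Proj(M)$ is an orthomodular lattice, hence an orthomodular poset, $\mathcal{G}(\Proj M)$ is an orthogeometry, and therefore so is $\A_*(M)$. Finally, $M$ is proper if and only if $\Proj(M)$ has no maximal Boolean subalgebra with $4$ elements, which holds if and only if the orthogeometry $\mathcal{G}(\Proj M)$ is proper; as properness is preserved by the isomorphism $\A_*(M)\cong\mathcal{G}(\Proj M)$, this holds if and only if $\A_*(M)$ is proper.
\end{proof}
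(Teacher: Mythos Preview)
Your proof is correct and takes essentially the same approach as the paper: the paper's own proof simply declares the isomorphism $\A_*(M)\cong\mathcal{G}(\Proj M)$ to be trivial (it is an immediate consequence of the preceding order-isomorphism $\A(M)\cong\mathcal{B}_C(\Proj M)$ together with the observation that finite-height elements of $\mathcal{B}$ and $\mathcal{B}_C$ agree), and then handles properness by the remark following Definition~\ref{def:proper C*-algebra}, exactly as you do. You have merely spelled out the ``trivial'' part in more detail.
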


\begin{proof}
The only part that is not trivial is properness, and this follows from the remark below Definition \ref{def:proper C*-algebra}. 
\end{proof}	

Recall that a morphism between orthogeometries is a partial function between their points satisfying certain conditions.
Let $A$ and $A'$ be complete orthomodular lattices. Given an ortholattice morphism $f:A\to A'$, the map $\B(f):\B(A)\to\B(A')$ restricts to a partial map from the atoms $B_p=\{0,p,p',1\}$ of $\mathcal{B}(A)$ to the atoms of $\mathcal{B}(A')$. Explicitly, this is given by
\[ \mathcal{B}(f)(B_p)= \begin{cases}
B_{f(p)}, & \varphi(p)\neq 0,1;\\
\perp, & \text{otherwise}.
\end{cases}
\]
This restriction of $\B(f)$ is precisely the orthogeometry morphism $\mathcal{G}(f):\mathcal G{(A)}\to\mathcal G(A')$ as is shown in \cite[Def. 7.5, Prop. 7.6]{HHLN}.


\begin{proposition}\label{prop:A isomorphic to GProj}
There is a functor $\A_*:\mathbf{AW^*_J}\to \mathbf{COG}$ taking an AW*-algebra $M$ to the orthogeometry $\A(M)$ and a normal Jordan homomorphism $\varphi:M\to N$ to the partial function obtained as the restriction of $\A(\varphi)$ to the atoms of $\A_*(M)$. Further, this functor is naturally isomorphic to $\mathcal G\circ\Proj$. 
\end{proposition}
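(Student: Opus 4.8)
The plan is to verify that $\A_*$ is a well-defined functor and then exhibit a natural isomorphism to $\mathcal{G}\circ\Proj$ using the building blocks already assembled. First I would check that $\A_*$ is well-defined on objects: by Corollary~\ref{prop:A orthogeometry}, $\A_*(M)$ is an orthogeometry isomorphic to $\mathcal{G}(\Proj M)$, and since $\Proj M$ is a complete orthomodular lattice, this orthogeometry lies in $\mathbf{COG}$. For morphisms, given a normal Jordan homomorphism $\varphi:M\to N$, Lemma~\ref{lem:jordan homomorphisms preserve generating sets} shows $\A(\varphi)$ carries a commutative AW*-subalgebra to a commutative AW*-subalgebra, and since $\varphi$ restricted to any commutative AW*-subalgebra is a normal *-homomorphism (Lemma~\ref{lem:Jordan homomorphism}), it maps an atom $A_p=\mathrm{span}(p,1_M-p)$ either to the atom $A_{\varphi(p)}$ (when $\varphi(p)\neq 0,1$) or to a one-dimensional subalgebra, which under the partial-function convention is recorded as $\bot$. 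So the restriction of $\A(\varphi)$ to atoms is a well-defined partial function between the points of $\A_*(M)$ and $\A_*(N)$.

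Next I would establish the natural isomorphism. For each AW*-algebra $M$, the preceding proposition gives mutually inverse order-isomorphisms $\Proj:\A(M)\to\mathcal{B}_C(\Proj M)$ and $AW^*(\,\cdot\,)$, which restrict to a bijection between the height-$\leq 2$ elements on both sides, i.e.\ between the points and lines of $\A_*(M)$ and those of $\mathcal{G}(\Proj M)$. This bijection preserves the incidence of points on lines since it is an order-isomorphism, hence is an isomorphism of orthogeometries; call it $\eta_M:\A_*(M)\xrightarrow{\sim}\mathcal{G}(\Proj M)$, sending $A_p\mapsto B_p$. I then need naturality: for a normal Jordan homomorphism $\varphi:M\to N$, the square relating $\A_*(\varphi)$ and $\mathcal{G}(\Proj(\varphi))$ must commute. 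Chasing a point $A_p$: one way around gives $\eta_N(\A_*(\varphi)(A_p))$, which is $B_{\varphi(p)}$ when $\varphi(p)\neq 0,1$ and $\bot$ otherwise (using that $\Proj$ of the subalgebra generated by $p$ and $1-p$ is $\{0,\varphi(p),\varphi(p)',1\}$); the other way gives $\mathcal{G}(\Proj\varphi)(\eta_M(A_p)) = \mathcal{G}(\Proj\varphi)(B_p)$, which by the explicit formula for $\mathcal{G}(f)$ recalled just before this proposition equals $B_{\Proj(\varphi)(p)} = B_{\varphi(p)}$ when $\varphi(p)\neq 0,1$ and $\bot$ otherwise. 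These agree, so the square commutes on points, and since orthogeometry morphisms are partial functions on points, this suffices.

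Finally, functoriality of $\A_*$ itself (preservation of identities and composition) follows formally once the natural isomorphism $\eta$ is in place: $\A_*$ agrees with $\mathcal{G}\circ\Proj$ up to the natural family of isomorphisms $\eta$, and $\mathcal{G}\circ\Proj$ is a composite of functors established earlier, so $\A_*$ inherits functoriality by transport of structure along $\eta$.

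The main obstacle I anticipate is the naturality square: one must be careful that the restriction of $\A(\varphi)$ to atoms really does compute $B_{\varphi(p)}$ rather than something coarser, which relies on knowing that $\varphi$ restricted to the commutative AW*-subalgebra $AW^*(p,1-p)$ is a *-homomorphism whose effect on projections is exactly $p\mapsto\varphi(p)$. This is where Lemma~\ref{lem:Jordan homomorphism} and the fact that $\varphi$ preserves projections and orthocomplements are doing the real work, together with the boundary bookkeeping when $\varphi(p)\in\{0,1\}$ so that the target degenerates to a one-dimensional algebra and the partial function is undefined there.
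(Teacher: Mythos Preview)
Your proposal is correct and follows essentially the same approach as the paper: the paper's proof is the single sentence ``For an AW*-algebra $M$ the isomorphism $\Proj:\A_*(M)\to\mathcal{B}_*(\Proj (M))$ provides the desired natural isomorphism,'' and your argument simply unpacks this by exhibiting $\eta_M$ as that restriction of $\Proj$, verifying the naturality square on points, and deducing functoriality by transport along $\eta$. The extra detail you supply (the case split on $\varphi(p)\in\{0,1\}$, the appeal to Lemma~\ref{lem:Jordan homomorphism} to see $\varphi$ acts as a $*$-homomorphism on each atom) is exactly what the paper leaves to the reader.
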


\begin{proof}
For an AW*-algebra $M$ the isomorphism $\Proj:\A_*(M)\to\mathcal{B}_*(\Proj (M))$ provides the desired natural isomorphism. 
\end{proof}

In combination with Theorem \ref{thm:mainGProj} we obtain:

\begin{corollary}\label{cor:maincor}
The functor $\A_* : \mathbf{AW^*_J}\to\mathbf{COG}$ is injective on proper AW*-algebras with no type $I_2$ factor. If $M$ and $N$ are proper AW*-algebras with no type $I_2$ factor, then $\varphi\mapsto\A_*(\varphi)$ is a bijective correspondence between the proper normal Jordan homomorphisms from $M$ to $N$ and the proper normal hypergraph morphisms from $\A_*(M)$ to $\A_*(N)$.
\end{corollary}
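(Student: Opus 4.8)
The plan is to reduce everything to statements already proven about the composite $\mathcal G\circ\Proj$ via the natural isomorphism supplied by Proposition~\ref{prop:A isomorphic to GProj}. The key observation is that a natural isomorphism between functors transports all the relevant properties --- injectivity on objects, and bijectivity of the relevant hom-sets --- essentially for free, so the real content is just to unwind the definitions carefully and check the one point that is \emph{not} automatic, namely that the isomorphism respects the adjective ``proper'' on both objects and morphisms.

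First I would record that by Proposition~\ref{prop:A isomorphic to GProj} there is a natural isomorphism $\eta:\A_*\Rightarrow\mathcal G\circ\Proj$, with each component $\eta_M:\A_*(M)\to\mathcal G(\Proj M)$ an isomorphism of orthogeometries given by the map $\Proj$ restricted to atoms (equivalently, to elements of height at most two). For injectivity on objects: if $M,N$ are proper AW*-algebras with no type $I_2$ factor and $\A_*(M)\cong\A_*(N)$, then composing with $\eta_M$ and $\eta_N^{-1}$ gives $\mathcal G(\Proj M)\cong\mathcal G(\Proj N)$, and Theorem~\ref{thm:mainGProj} (injectivity of $\mathcal G\circ\Proj$ on such algebras) yields $M\cong N$ --- here one uses that an isomorphism of AW*-algebras is in particular a normal Jordan isomorphism. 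For the hom-set statement, naturality of $\eta$ gives for each normal Jordan homomorphism $\varphi:M\to N$ the commuting square $\eta_N\circ\A_*(\varphi)=(\mathcal G\circ\Proj)(\varphi)\circ\eta_M$, so the assignment $\psi\mapsto\eta_N^{-1}\circ\psi\circ\eta_M$ is a bijection between orthogeometry morphisms $\mathcal G(\Proj M)\to\mathcal G(\Proj N)$ and orthogeometry morphisms $\A_*(M)\to\A_*(N)$ that carries $(\mathcal G\circ\Proj)(\varphi)$ to $\A_*(\varphi)$; combined with the bijection $\varphi\mapsto(\mathcal G\circ\Proj)(\varphi)$ of Theorem~\ref{thm:mainGProj}, this gives the desired bijection $\varphi\mapsto\A_*(\varphi)$.

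The step I expect to be the main obstacle --- really the only non-formal point --- is checking that the bijections above restrict correctly to the ``proper'' subclasses. On objects this is already handled: Corollary~\ref{prop:A orthogeometry} says $\A_*(M)$ is proper iff $M$ is proper, and this matches the characterization (via the remark below Definition~\ref{def:proper C*-algebra}) that $\mathcal G(\Proj M)$ is proper iff $M$ is proper, so $\eta_M$ is an isomorphism of \emph{proper} orthogeometries exactly when $M$ is proper. On morphisms, ``proper'' for a normal Jordan homomorphism is defined (Definition preceding Theorem~\ref{thm:mainGProj}) via its restriction to projection lattices, which is precisely the data that $\mathcal G\circ\Proj$ sees; and ``proper'' for an orthogeometry morphism is an isomorphism-invariant condition (it is phrased in terms of images of blocks in the associated hypergraph), so it is preserved and reflected by conjugation with the isomorphisms $\eta_M,\eta_N$. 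Thus $\A_*(\varphi)$ is proper iff $(\mathcal G\circ\Proj)(\varphi)$ is proper iff $\varphi$ is proper, and the bijection of Theorem~\ref{thm:mainGProj} restricts to the proper morphisms on both sides.

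Assembling these pieces: the composite bijection $\varphi\mapsto\A_*(\varphi)=\eta_N^{-1}\circ(\mathcal G\circ\Proj)(\varphi)\circ\eta_M$ is a bijection from proper normal Jordan homomorphisms $M\to N$ to proper normal orthogeometry (equivalently hypergraph, by Definition~\ref{orthogeometry morphism}) morphisms $\A_*(M)\to\A_*(N)$, and injectivity on objects follows as above. One small bookkeeping item to state explicitly is that ``normal'' is preserved throughout: $\A_*(\varphi)$ preserves all joins iff $f_\alpha$ does (Proposition~\ref{lifts}, transported along $\eta$), which matches the definition of the morphisms in $\mathbf{COG}$, so all the functors and bijections live in the stated categories. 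This completes the proof.
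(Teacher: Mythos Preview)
Your proposal is correct and follows exactly the route the paper intends: the corollary is stated immediately after Proposition~\ref{prop:A isomorphic to GProj} with only the phrase ``In combination with Theorem~\ref{thm:mainGProj} we obtain'' as justification, so the paper's proof is precisely the transport along the natural isomorphism $\eta:\A_*\Rightarrow\mathcal G\circ\Proj$ that you have spelled out. Your additional care in checking that ``proper'' and ``normal'' are isomorphism-invariant and hence preserved by conjugation with $\eta$ is more detail than the paper provides, but it is the right verification and not a different argument.
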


\begin{theorem}\label{thm:A of M main}
Let $M$ and $N$ be proper AW*-algebras, and assume that $M$ has no type I$_2$ summand. Then there exists a bijection $\varphi\mapsto\A_*(\varphi)$ between proper normal Jordan homomorphisms $\varphi:M\to N$ and maps $\Phi:\A(M)\to\A(N)$ that preserve all existing suprema, and that restrict to proper normal morphisms of orthogeometries $\A_*(M)\to\A_*(N)$.
\end{theorem}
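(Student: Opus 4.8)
The plan is to combine Corollary~\ref{cor:maincor} with the order-isomorphism $\A(M)\cong\mathcal{B}_C(\Proj M)$ and the fact that $\A_*(M)$ consists exactly of the low-height part of $\A(M)$. First I would record the forward direction: given a proper normal Jordan homomorphism $\varphi:M\to N$, the map $\A(\varphi):C\mapsto\varphi[C]$ is a well-defined map $\A(M)\to\A(N)$ by Lemma~\ref{lem:jordan homomorphisms preserve generating sets} (it lands in commutative AW*-subalgebras), it preserves all existing suprema by Remark~\ref{nkl}, and its restriction to the atoms of $\A_*(M)$ is by definition $\A_*(\varphi)$, which is a proper normal orthogeometry morphism by Corollary~\ref{cor:maincor}. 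So $\Phi=\A(\varphi)$ has all the required properties, and $\varphi\mapsto\A(\varphi)$ is injective on proper AW*-algebras with no type $I_2$ summand because already its restriction $\varphi\mapsto\A_*(\varphi)$ is injective by Corollary~\ref{cor:maincor}.

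For surjectivity, suppose $\Phi:\A(M)\to\A(N)$ preserves all existing suprema and restricts to a proper normal orthogeometry morphism $\alpha:\A_*(M)\to\A_*(N)$. By Corollary~\ref{cor:maincor} there is a proper normal Jordan homomorphism $\varphi:M\to N$ with $\A_*(\varphi)=\alpha$; I must show $\A(\varphi)=\Phi$ on all of $\A(M)$, not merely on atoms. The key is that $\A(M)$ is atomistic (Corollary~\ref{lem:A(M) atomistic}): every $C\in\A(M)$ satisfies $C=\bigvee\{A_p:p\in\Proj(C)\}$, the supremum of the atoms below it. Since both $\Phi$ and $\A(\varphi)$ preserve all existing suprema (the latter by Remark~\ref{nkl}) and this particular supremum exists, it suffices to check that $\Phi$ and $\A(\varphi)$ agree on atoms. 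On an atom $A_p$ with $p$ non-trivial, $\A(\varphi)(A_p)=\mathrm{span}(\varphi(p),1_N-\varphi(p))$, which is $A_{\varphi(p)}=\alpha(A_p)$ when $\varphi(p)\neq 0,1$ and is $\mathbb{C}1_N$ (the bottom of $\A(N)$, i.e.\ $\bot$) otherwise; and $\Phi$ restricted to atoms is $\alpha$ by hypothesis, with $\Phi$ sending the bottom of $\A(M)$ to the bottom of $\A(N)$ since it preserves the empty supremum. Hence $\Phi(A_p)=\A(\varphi)(A_p)$ for every atom, and atomisticity plus join-preservation forces $\Phi=\A(\varphi)$.

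The main obstacle is the bookkeeping around the empty set and the trivial subalgebra: an orthogeometry morphism is a \emph{partial} function on points, with $\alpha(A_p)=\bot$ meaning "undefined", whereas $\A(\varphi)$ is a genuine map into $\A(N)$ sending $A_p$ to the least element $\mathbb{C}1_N$ in that case. One must be careful that "preserves all existing suprema" includes the empty supremum, so that $\Phi(\mathbb{C}1_M)=\mathbb{C}1_N$, and that the convention identifying $\bot$ with the bottom of $\A(N)$ makes the two descriptions literally agree on atoms; this is exactly the convention already in force via Proposition~\ref{prop:A isomorphic to GProj} and the description of $\mathcal{B}(f)$ on atoms given just before it. Once this is pinned down, the argument is the two-line atomisticity computation above. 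I would also note in passing that properness of $\varphi$ is equivalent to properness of $\A_*(\varphi)$ by the definitions and the remark below Definition~\ref{def:proper C*-algebra}, so the correspondence matches "proper" on both sides.
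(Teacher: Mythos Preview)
Your proposal is correct and follows essentially the same route as the paper's proof: both invoke Corollary~\ref{cor:maincor} to handle the bijection at the level of $\A_*$, use Remark~\ref{nkl} to see that $\A(\varphi)$ preserves all existing joins, and then appeal to atomisticity of $\A(M)$ (Corollary~\ref{lem:A(M) atomistic}) to conclude that any join-preserving map agreeing on atoms with $\A(\varphi)$ must equal it. Your additional bookkeeping about the bottom element and the empty supremum is more careful than the paper's version but not a different argument.
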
	

\begin{proof}
By Corollary \ref{cor:maincor}, the assignment $\varphi\mapsto\A_*(\varphi)$ is a bijection between proper normal Jordan homomorphisms $M\to N$ and proper normal morphisms of orthogeometries $\A_*(M)\to\A_*(N)$. Since $\A_*(\varphi)$ is the restriction of $\A(\varphi)$ to an orthogeometry morphism $\A_*(M)\to\A_*(N)$, it is sufficient to show that any proper normal morphism of orthogeometries $\Phi:\A_*(M)\to\A_*(N)$ uniquely extends to a map $\A(M)\to\A(N)$ that preserves arbitrary existing joins.
	
Since any such a $\Phi$ equals $\A_*(\varphi)$ for some proper normal Jordan homomorphism, $\A(\varphi)$ is such an extension. Let $\Psi$ be another extension. Note that $\A(\varphi)$ and $\Psi$ both preserve existing joins by Remark~\ref{nkl}. Since $\A(M)$ and $\A(N)$ are atomistic and these maps coincide on $\A_*(M)$, hence the set of atoms in $\A(M)$, it follows that they coincide on $\A(M)$. 
\end{proof}

\section{Concluding Remarks}
We have found a functor $\mathcal G\circ\Proj:\mathbf{AW^*_J}\to\mathbf{COG}$ assigning to an AW*-algebra an orthogeometry. This functor is injective on proper AW*-algebras without a type I$_2$ summand. Given proper AW*-algebras without type I$_2$ summands $M$ and $N$. We also obtained a bijection between proper normal Jordan homomorphisms between from $M$ to $N$, and proper normal orthogeometry morphisms from $\mathcal G(\Proj(M))$ to $\mathcal G(\Proj(N))$. Furthermore, we have shown that $\mathcal G(\Proj(M))$ is isomorphic to the poset $\A_*(M)$ of commutative AW*-subalgebras of $M$ dimension at most 2, hence the poset $\A(M)$ of all commutative AW*-subalgebras of $M$ contains the same information as $\mathcal G(\Proj(M))$. Indeed, we showed that the set of proper normal Jordan homomorphisms between $M$ and $N$ is bijective to the set of all maps from $\A(M)$ to $\A(N)$ that preserve all existing suprema and that restrict to proper normal morphisms of orthogeometries $\A_*(M)\to\A_*(N)$.

Given an AW*-algebra $M$, we can also consider the poset $\CC(M)$ of commutative C*-subalgebras, which in general will be larger than $\A(M)$ unless $M$ is finite dimensional. Then we can define $\CC_*(M)$ the subset of all elements of height at most two in $\CC(M)$, which is precisely $\A_*(M)$, so Corollary \ref{cor:maincor} holds also if we replace $\A$ by $\CC$. However, since the poset $\CC(M)$ is not atomistic, as follows from \cite[Thm. 2.4, Thm. 5.5., Thm 9.7]{HeunenLindenhovius}, we cannot find a bijection between Jordan homomorphisms from $M$ to another AW*-algebra $N$ and maps from $\CC(M)$ to $\CC(N)$ as in Theorem \ref{thm:A of M main}.

We already remarked that for an von Neumann algebra $M$ the poset $\V(M)$ coincides with $\A(M)$, hence all our statements hold as well if we replace the class of AW*-algebras by the class of von Neumann algebras. One could raise the question whether for an arbitrary AW*-algebra $M$ we can `recognize' from $\A_*(M)$ whether or not $M$ is a von Neumann algebra. By \cite{Pedersen} von Neumann algebras are characterized as the AW*-algebras $M$ with a separating family of normal states. Here a \emph{state} is a positive functional $\omega:M\to\mathbb C$ such that $\omega(1)=1$; a family $\mathcal F$ of states on $M$ is \emph{separating} if for each nonzero self adjoint $a\in M$ there is some $\omega\in\mathcal F$ such that $\omega(a)\neq 0$. One would like to proceed by recover normal states of $M$ as morphisms from $\A_*(M)$ to $\A_*(\mathbb C)$, but this is not possible for two reasons: firstly, states are in general not Jordan homomorphisms, and secondly, $\A_*(\mathbb C)$ is empty, hence certainly not proper. So we do not see how we can recognize from $\A_*(M)$ directly whether or not $M$ is a von Neumann algebra. However, we have the following indirect result: if $\A_*(M)$ is isomorphic to $\A_*(N)$ for some von Neumann algebra $N$, then $M$ should also be a von Neumann algebra, since the isomorphism between $\A_*(M)$ and $\A_*(N)$ implies the existence of some Jordan isomorphism $\varphi:M\to N$. Then $\mathcal F$ consisting of states $\omega\circ\varphi$ for a normal state $\omega$ on $N$ turns out to be a family of normal states on $M$ that is separating. 

In \cite{HeunenReyes}, a complete invariant for AW*-algebras with normal *-homomorphisms, called \emph{active lattices} was introduced, which consists of the othomodular lattice $\Proj(M)$ of projections of an AW*-algebra $M$ together with a group action on $\Proj(M)$. By the results of \cite{HHLN} the orthomodular lattice part of an active lattice associated to $M$ contains the same information as the orthogeometry $\mathcal G(\Proj(M))$, and as a consequence our work shows that orthogeometries encode the Jordan structure of $M$, and that the group action encodes the extra information that is required to obtain an invariant for *-homomorphisms instead of Jordan homomorphisms. 

Finally, both the results of orthogeometries and active lattices rely on the fact that AW*-algebras have abundant projections. This begs the question whether we can extend the results in this contribution to a larger class of operator algebras with ample projections, for instance the real rank zero algebras. A possible solution for this problem is to generalize Theorem \ref{thm:hamhalter} to real rank zero algebras. This is still an open problem.

\section*{Acknowledgements}
We thank Chris Heunen and Mirko Navara, since this work would have been impossible without them.
Bert Lindenhovius was funded by the AFOSR under the MURI
grant number FA9550-16-1-0082 entitled, "Semantics, Formal Reasoning, and Tool
Support for Quantum Programming".

\end{document}